\documentclass[smallextended]{svjour3-ppn}

\smartqed
\usepackage{amsmath,amsfonts,amssymb,amsxtra,setspace,xspace,graphicx,lmodern}
\usepackage[colorlinks=true]{hyperref}
\hypersetup{urlcolor=blue, citecolor=red, linkcolor=blue}
\usepackage[usenames,dvipsnames]{color}

\newtheorem{thm}{Theorem}
\newtheorem{cor}[thm]{Corollary}
\newtheorem{lem}[thm]{Lemma}
\newtheorem{pro}[thm]{Proposition}

\newtheorem{oss}[thm]{Remark}
\numberwithin{thm}{section}
\numberwithin{equation}{section}

\newcommand{\C}{{\mathsf C}}
\newcommand{\D}{{\mathcal D(\R^d)}}
\newcommand{\R}{{\mathbb R}}
\newcommand{\RF}{{\mathcal G}}
\newcommand{\N}{{\mathbb N}}
\newcommand{\be}[1]{\begin{equation}\label{#1}}
\newcommand{\ee}{\end{equation}}
\renewcommand{\(}{\left(}
\renewcommand{\)}{\right)}
\newcommand{\iRd}[1]{\int_{\R^d}{#1}\,dx}
\newcommand{\nrm}[2]{\left\|{#1}\right\|_{#2}}
\newcommand{\Lqgamma}{\mathrm L^q_\gamma(\R^d)}
\newcommand{\Hpgamma}{\mathcal H_{p,\gamma}(\R^d)}

\newcommand{\Hpgammas}{\mathcal H_{p,\gamma}^\star}

\journalname{Mathematische Annalen (Giga)}


\begin{document}\title{Weighted interpolation inequalities: a perturbation approach}
\titlerunning{Weighted interpolation inequalities}

\author{Jean Dolbeault \and Matteo Muratori \and Bruno Nazaret}
\authorrunning{J.~Dolbeault \and M.~Muratori \and B.~Nazaret}
\institute{J.~Dolbeault \at Ceremade, CNRS UMR n$^{\circ}$ 7534 and Universit\'e Paris-Dauphine, PSL\kern-0.5pt\raisebox{2.3pt}{$\star$}, Place de Lattre de Tassigny, 75775 Paris C\'edex~16, France. \email{\href{mailto:dolbeaul@ceremade.dauphine.fr}{dolbeaul@ceremade.dauphine.fr}}
\and M.~Muratori \at Dipartimento di Matematica \emph{Francesco Brioschi}, Politecnico di Milano, Piazza Leonardo da Vinci 32, 20133 Milano, Italy. \email{\href{mailto:matteo.muratori@polimi.it}{matteo.muratori@polimi.it}}
\and B.~Nazaret \at SAMM, Universit\'e Paris 1, 90, rue de Tolbiac, 75634 Paris C\'edex~13, France.\\ \email{\href{mailto:Bruno.Nazaret@univ-paris1.fr}{Bruno.Nazaret@univ-paris1.fr}}}
\date{\today}
\maketitle

\begin{abstract}
We study optimal functions in a family of Caffarelli-Kohn-Niren\-berg inequalities with a power-law weight, in a regime for which standard symmetrization techniques fail. We establish the existence of optimal functions, study their properties and prove that they are radial when the power in the weight is small enough. Radial symmetry up to translations is true for the limiting case where the weight vanishes, a case which corresponds to a well-known subfamily of Gagliardo-Nirenberg inequalities. Our approach is based on a concentration-compactness analysis and on a perturbation method which uses a spectral gap inequality. As a consequence, we prove that optimal functions are explicit and given by Barenblatt-type profiles in the perturbative regime.

\keywords{Functional inequalities \and Weights \and Optimal functions \and Best constants \and Symmetry \and Concentration-compactness \and Gamma-convergence}

\subclass{49J40 \and 46E35 \and 35B06 \and 26D10}
\end{abstract}

\section{Introduction}\label{intro}

This paper is devoted to a special class of \emph{Caffarelli-Kohn-Nirenberg interpolation inequalities} that were introduced in~\cite{Caffarelli-Kohn-Nirenberg-84} and can be written as
\be{CKN}
\nrm w{2p,\gamma}\le\C_\gamma\,\nrm{\nabla w}2^\vartheta\,\nrm w{p+1,\gamma}^{1-\vartheta}\quad\forall\,w\in\D\,,
\ee
where $\D$ denotes the space of smooth functions on $\R^d$ with compact support, $\C_\gamma$ is the \emph{best constant} in the inequality,
\be{eq: parameters}
d\ge3\,,\quad\gamma\in(0,2)\,,\quad p\in\(1,2^\ast_\gamma/2\)\quad\mbox{with}\quad 2^\ast_\gamma:=2\,\frac{d-\gamma}{d-2}
\ee
and
\be{eq: theta-2}
\vartheta:=\frac{2^\ast_\gamma\,(p-1)}{2\,p\(2^\ast_\gamma-p-1\)}=\frac{(d-\gamma)\,(p-1)}{p\,\big(d+2-2\,\gamma-p\,(d-2)\big)}\,.
\ee
The norms are defined by
\[
\nrm w{q,\gamma}:=\(\iRd{|w|^q\,|x|^{-\gamma}}\)^{1/q}\quad\mbox{and}\quad\nrm wq:=\nrm w{q,0}\,.
\]
The optimal constant $\C_\gamma$ is determined by the minimization of the quotient
\[
\mathcal Q_\gamma[w]:=\frac{\nrm{\nabla w}2^\vartheta\,\nrm w{p+1,\gamma}^{1-\vartheta}}{\nrm w{2p,\gamma}}\,.
\]

When $\gamma=0$, Inequalities~\eqref{CKN} become a particular subfamily of the well-known Gagliardo-Nirenberg inequalities introduced in~\cite{Ga,Nir}. In that case, optimal functions have been completely characterized in~\cite{DD}. Gagliardo-Nirenberg inequalities have attracted lots of interest in the recent years: see for instance~\cite{DT2011} and references therein, or~\cite{MR3155209}.

\medskip We have two main reasons to consider such a problem. First of all, optimality \emph{among radial functions} is achieved by
\be{Barenblatt-w}
w_\star(x):=\(1+|x|^{2-\gamma}\)^{-\frac1{p-1}}\quad\forall\,x\in\R^d
\ee
up to multiplications by a constant and scalings as we shall see later. It is remarkable that the function $w_\star$ clearly departs from standard optimal functions that are usually characterized using the conformal invariance properties of the sphere and the stereographic projection, like for instance in~\cite[Section~6.10]{MR3155209}.

If $d=1$, it is elementary to prove that optimal functions for~\eqref{CKN} are of the form~\eqref{Barenblatt-w} up to multiplication by constants and scalings. The case $d=2$ is not considered in this paper. In any higher dimension $d\ge3$, even with a radial weight of the form $|x|^{-\gamma}$, there is no simple symmetry result that would allow us to identify the optimal functions in terms of $w_\star$. In other words, it is not known if equality holds in
\be{Ineq:Symmetry}
\inf_{w\in\mathcal D^\star(\R^d)\setminus\{0\}}\mathcal Q_\gamma[w]=:\(\C^\star_\gamma\)^{-1}=\mathcal Q_\gamma\left[w_\star\right]\ge\(\C_\gamma\)^{-1}:=\inf_{w\in\mathcal D(\R^d)\setminus\{0\}}\mathcal Q_\gamma[w]\,,
\ee
where $\mathcal D^\star(\R^d)$ denotes the subset of $\mathcal D(\R^d)$ which is spanned by radial smooth functions, \emph{i.e.},~smooth functions which depend only on $|x|$. Our main result is a first step in this direction.
\begin{thm}\label{thm: main-thm-barenblatt-gamma-intro} Let $d\ge3$. For any $p\in(1,d/(d-2))$, there exists a positive $\gamma^\ast$ such that equality holds in~\eqref{Ineq:Symmetry} for all $\gamma\in(0,\gamma^\ast)$.\end{thm}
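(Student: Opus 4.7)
The proof is perturbative around $\gamma=0$, where~\eqref{CKN} reduces to a Gagliardo-Nirenberg inequality whose optimal functions, by~\cite{DD}, are exactly the scalings and translations of the Barenblatt profile $w_\star$. The task is to show that switching on the weight $|x|^{-\gamma}$, which breaks translation invariance while preserving rotational and scaling invariance, selects the origin-centred $w_\star$ as the unique minimizer (up to scaling) among all admissible functions, not only the radial ones.

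The first stage is existence and a $\Gamma$-convergence limit as $\gamma\to 0^+$. For each fixed $\gamma$ in the admissible range, I would produce a minimizer $w_\gamma$ of $\mathcal Q_\gamma$ by applying concentration-compactness to a minimizing sequence normalized so that $\nrm{w_n}{2p,\gamma}=\nrm{w_n}{p+1,\gamma}=1$: translation invariance is already absent, dichotomy is ruled out by strict subadditivity of the infimum arising from $p+1<2p<2^\ast_\gamma$, and vanishing is ruled out by the embeddings associated with~\eqref{eq: parameters}. A $\Gamma$-convergence argument for $\mathcal Q_\gamma\to\mathcal Q_0$ based on splitting $|x|^{-\gamma}=1+(|x|^{-\gamma}-1)$ and controlling the remainder separately near and away from the origin then shows that $w_\gamma$, after renormalization and along subsequences, converges to a Gagliardo-Nirenberg optimizer $\lambda\,w_\star(\cdot-x_0)$, and that $\C_\gamma\to\C_0$. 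A Taylor expansion in $\gamma$ of $\mathcal Q_\gamma[w_\star(\cdot-x_0)]$ then shows that the leading weight-induced correction is strictly minimized at $x_0=0$, pinning the limit to the origin.

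The main obstacle, and the reason for invoking a spectral gap inequality, is to upgrade this qualitative convergence into the exact identification $w_\gamma=w_\star$ (up to scaling) for $\gamma$ small. I would linearize $\mathcal Q_0$ around $w_\star$ and observe that the second variation is a Hardy-Poincar\'e type operator on a weighted $\mathrm L^2$ space whose kernel at $\gamma=0$ is spanned by the scaling and translation zero modes; breaking translation invariance by $|x|^{-\gamma}$ lifts the translational part of this kernel at order $\gamma$, producing a coercivity estimate of the form $\mathcal Q_\gamma[w]\ge\mathcal Q_\gamma[w_\star]+c\,\gamma\,\delta(w,w_\star)$ in a fixed neighbourhood of $w_\star$, where $\delta$ is a suitable weighted $\mathrm H^1$-type distance. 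Combined with $w_\gamma\to w_\star$, this quantitative stability forces $w_\gamma=w_\star$ for $\gamma\in(0,\gamma^\ast)$, yielding equality in~\eqref{Ineq:Symmetry}. The delicate point is to establish the coercivity estimate with the correct $\gamma$-dependence, which requires a careful asymptotic expansion of the second variation of $\mathcal Q_\gamma$ and a uniform lower bound on the Hardy-Poincar\'e constant for the $w_\star$-weighted measure.
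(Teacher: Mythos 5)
Your overall strategy --- perturbation around $\gamma=0$, concentration-compactness, a selection principle pinning the limiting translation at the origin, and a spectral-gap argument --- coincides with the skeleton of the paper's proof, but the last and decisive step departs genuinely from what the paper does, and your version hides two real difficulties.

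The paper does \emph{not} attempt a quantitative coercivity estimate of the form $\mathcal Q_\gamma[w]\ge\mathcal Q_\gamma[w_\star]+c\,\gamma\,\delta(w,w_\star)$. Instead it argues by contradiction and compactness: assuming $w_{\gamma_n}$ is non-radial along a sequence $\gamma_n\downarrow 0$, it forms normalized angular derivatives $\omega_n=\nabla_{\mathsf A_n}w_{\gamma_n}$, shows they solve the linearized equation~\eqref{eq: laplaciano-deriv-ang} and obey \emph{two} orthogonality conditions --- one to $w_0^{2p-1}$ from the constraint, and one to $\frac{x_i}{|x|^2}(w_0^p-w_0^{2p-1})$ coming from testing against directional derivatives $\mathsf e\cdot\nabla w_{\gamma_n}$ --- passes to the weak limit, and concludes that the limit $\omega$ saturates the Hardy-Poincar\'e inequality~\eqref{eq: func-HP}. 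Equality forces $\omega=a\cdot\nabla w_0$, and the second orthogonality condition then yields $\iRd{|x|^{-2}\big(\tfrac{w_0^{2p}}{2p}-\tfrac{w_0^{p+1}}{p+1}\big)}=0$, contradicting positivity of that integral. The extra orthogonality to translational modes is the key structural observation; your proposal, not passing through the angular derivatives, never produces it.

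Two issues with your route. First, the coercivity estimate you posit is problematic as stated: $\mathcal Q_\gamma$ is scale-invariant, so any estimate must be modded out by dilations, and the correct form should be quadratic in $\delta$. More seriously, the good term along translational directions is only of size $\gamma\|v_{\rm trans}\|^2$, while cubic remainders produce cross terms such as $\|v_\perp\|\,\|v_{\rm trans}\|^2$; absorbing these appears to require $\|v_{\rm trans}\|^2\lesssim\gamma$, so the neighbourhood where coercivity holds may shrink with $\gamma$. Combining this with the merely qualitative convergence $w_\gamma\to w_\star$ coming from concentration-compactness is exactly the kind of rate-versus-neighbourhood mismatch that the paper's soft compactness argument is designed to avoid. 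You flag this as ``the delicate point,'' but it is in fact where the proof could fail. Second, you pass quickly over the boundedness of the translations $\{y_\gamma\}$. This is nontrivial because the natural upper bound $\mathcal E_\gamma[w_\gamma]-\mathcal E_0[w_\gamma]\le\mathcal E_\gamma[w_0]-\mathcal E_0[w_0]$ involves the unknown constant $\mathsf J_\gamma$; the paper's Lemma~\ref{lem: bound-trans1} resolves this by comparing with a rescaled Barenblatt $W_n$ chosen so that the $\mathsf J$-terms cancel, and only then obtains the logarithmic estimate that rules out $|y_\gamma|\to\infty$. Your Taylor-expansion heuristic for the selection at $x_0=0$ corresponds to Proposition~\ref{lem: minimizer}, but without the preliminary boundedness of $\{y_\gamma\}$ there is nothing to select from.
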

A slightly stronger result is given in Theorem~\ref{thm: main-thm-barenblatt-gamma}.

We remark that optimal functions for $\mathcal Q_\gamma$ can be assumed to be nonnegative and satisfy, up to multiplications by a constant and scalings, the semilinear equation
\be{EL}
-\,\Delta w+|x|^{-\gamma}\big(w^p-w^{2p-1}\big)=0\,.
\ee
This will be discussed in Section~\ref{sec: prelim}. However, the classical result of B.~Gidas, W.M.~Ni and L.~Nirenberg in~\cite{MR634248} does not allow us to decide if a positive solution of~\eqref{EL} has to be radially symmetric. So far, it is not known yet if the result can be deduced from a symmetrization method either, even for a minimizer of $\mathcal Q_\gamma$. We shall say that \emph{symmetry breaking} occurs if $\C^\star_\gamma<\C_\gamma$. Whether this happens for some $\gamma\in(0,2)$ and $p$ in the appropriate range, or not, is an open question.

\medskip The symmetry result of Theorem~\ref{thm: main-thm-barenblatt-gamma-intro} has very interesting consequences, and here is a second motivation for this paper. Let us consider the \emph{fast diffusion equation with weight}
\be{Eqn:FD}
u_t+|x|^\gamma\,\nabla\cdot\(u\,\nabla u^{m-1}\)=0\,,\quad(t,x)\in\R^+\times\R^d\,,
\ee
with initial condition $u(t=0,\cdot)=u_0\in\mathrm L^1(\R^d,|x|^{-\gamma}\,dx)$, $u_0\ge0$ and $m\in(m_1,1)$, where
\[
m_1:=\frac{2\,d-\gamma-2}{2\,(d-\gamma)}\,.
\]
{}From the point of view of the well-posedness of the Cauchy problem and of the long-time behaviour of the solutions, such an equation, in the porous media case (namely for $m>1 $), has been studied in~\cite{MR2425009,MR2461559}. In this paper, we consider the fast diffusion regime (case $m<1$). In particular, it can be shown that the mass $M:=\iRd{u\,|x|^{-\gamma}}$ is independent of $t$. Let us introduce the time-dependent rescaling
\be{Eqn:ChangeOfVariables}
u(t,x)=R^{\gamma-d}\,v\((2-\gamma)^{-1}\log R\,,\,\frac xR\),
\ee
with $R=R(t)$ defined by
\[
\frac{dR}{dt}=(2-\gamma)\,R^{(m-1)(\gamma-d)-1+\gamma},\quad R(0)=1\,.
\]
The solution is explicit and given by
\[\label{Eqn:EDO}
R(t)=\big[\,1+(2-\gamma)\,(d-\gamma)\,(m-m_c)\,t\,\big]^\frac1{(d-\gamma)\,(m-m_c)}\quad\mbox{where}\quad m_c:=\frac{d-2}{d-\gamma}\,.
\]
After changing variables we obtain that the rescaled function $v$ solves the Fokker-Planck-type equation
\be{Eqn:FD-FP}
v_t+|x|^\gamma\,\nabla\cdot\Big[\,v\,\nabla\(v^{m-1}-|x|^{2-\gamma}\)\Big]=0
\ee
with initial condition $v(t=0,\cdot)=u_0$. The convergence of the solution of~\eqref{Eqn:FD} towards a self-similar solution of Barenblatt type as time goes to $\infty$ is replaced by the convergence of $v$ towards a stationary solution of~\eqref{Eqn:FD-FP} given by
\[
\mathfrak B(x):=\(C+|x|^{2-\gamma}\)^\frac 1{m-1}\,,
\]
where $C>0$ is uniquely determined by the condition
\[
\iRd{\mathfrak B\,|x|^{-\gamma}}=M\,.
\]
A straightforward computation shows that the \emph{free energy}, or \emph{relative entropy}
\[
\mathcal F[v]:=\frac1{m-1}\iRd{\(v^m-\mathfrak B^m-\,m\,\mathfrak B^{m-1}\,(v-\mathfrak B)\)|x|^{-\gamma}}
\]
is nonnegative and satisfies
\be{Eqn:FI}
\frac d{dt}\mathcal F[v(t,\cdot)]=-\,\mathcal I[v(t,\cdot)]\,,
\ee
where the \emph{Fisher information} is defined by
\[
\mathcal I[v]:=\frac m{1-m}\iRd{v\left|\,\nabla v^{m-1}-(2-\gamma)\,\frac x{|x|^\gamma}\right|^2}\,.
\]

As a consequence of Theorem~\ref{thm: main-thm-barenblatt-gamma-intro}, an elementary computation shows that the \emph{entropy -- entropy production inequality}
\be{CKN2}
(2-\gamma)^2\,\mathcal F[v]\leq\mathcal I[v]
\ee
 holds if $\gamma\in(0,\gamma^\ast)$. More precisely,~\eqref{CKN2} is equivalent to~\eqref{CKN} if we take $w=v^{m-1/2}$, $p=1/(2\,m-1)$ and perform a scaling. Accordingly, notice that $\mathfrak B^{m-1/2}$ is equal to $w_\star$ up to a scaling and a multiplication by a constant. This generalizes to $\gamma\in(0,\gamma^\ast)$ the results obtained in~\cite{DD} for the case~\hbox{$\gamma=0$}. Whether $(2-\gamma)^2$ is the best constant in~\eqref{CKN2} is a very natural question. The answer is \emph{yes} if $\gamma=0$ and \emph{no} if $\gamma>0$, as long as \emph{symmetry} (no symmetry breaking) holds. An answer has been provided in~\cite[Proposition~11]{2016arXiv160208319B}, which relies on considerations on the linearization. Key ideas are provided by the weighted fast diffusion equation whose large time asymptotics are governed by the linearized problem. These asymptotics are studied in~\cite{2016arXiv160208315B}. Recent progresses on the issue of symmetry breaking, which partially rely on the present paper, have been achieved in~\cite{2016arXiv160506373D}.
 
A consequence of~\eqref{CKN2} is the exponential convergence of the solution $v$ of~\eqref{Eqn:FD-FP}~to~$\mathfrak B$.
\begin{cor}\label{cor:rateFD} Let $d\ge3$, $m\in(1-1/d,1)$ and $\gamma\in(0,\gamma^\ast)$, where $\gamma^\ast$ is defined as in Theorem~\ref{thm: main-thm-barenblatt-gamma-intro} for $p=1/(2\,m-1)$. If $v$ is a solution to~\eqref{Eqn:FD-FP} with nonnegative initial datum $u_0$ such that $u_0\in\mathrm L^1(\R^d,|x|^{-\gamma}\,dx)$ and $\iRd{u_0^m\,|x|^{-\gamma}}+\iRd{u_0\,|x|^{2-2\,\gamma}}$ is finite, then
\[
\mathcal F[v(t,\cdot)]\le\mathcal F[u_0]\;e^{-\,(2-\gamma)^2\,t}\quad\forall\,t>0\,.
\]
\end{cor}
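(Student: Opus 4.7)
The plan is to combine the entropy--entropy production inequality~\eqref{CKN2} with the dissipation identity~\eqref{Eqn:FI} so as to obtain a closed differential inequality for $t\mapsto\mathcal F[v(t,\cdot)]$, and then conclude by Gronwall's lemma.

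First I would check that the parameters fall in the right range: for $m\in(1-1/d,1)$ we have $2\,m-1\in(1-2/d,1)$, hence $p:=1/(2\,m-1)\in(1,d/(d-2))$, so Theorem~\ref{thm: main-thm-barenblatt-gamma-intro} supplies a threshold $\gamma^\ast>0$ for which~\eqref{CKN2} is available for every $\gamma\in(0,\gamma^\ast)$, via the change of unknown $w=v^{m-1/2}$ already pointed out in the text. The integrability hypotheses on $u_0$ guarantee that $\mathcal F[u_0]<\infty$: indeed $\mathfrak B^{m-1}\sim|x|^{2-\gamma}$ at infinity, so $\mathfrak B^{m-1}|x|^{-\gamma}\sim|x|^{2-2\gamma}$ and the linear cross term in $\mathcal F[u_0]$ is controlled by $\iRd{u_0\,|x|^{2-2\gamma}}$; the quadratic--in--$u_0$ term is controlled by $\iRd{u_0^m\,|x|^{-\gamma}}$; and the remaining constants by $M$ and the explicit integrals of $\mathfrak B$.

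The core of the argument is then the chain
\[
\frac d{dt}\mathcal F[v(t,\cdot)]=-\,\mathcal I[v(t,\cdot)]\leq-(2-\gamma)^2\,\mathcal F[v(t,\cdot)]\,,
\]
where the equality is~\eqref{Eqn:FI} and the inequality is~\eqref{CKN2} applied to $v(t,\cdot)$. Integrating this Gronwall-type inequality between $0$ and $t$ delivers $\mathcal F[v(t,\cdot)]\le\mathcal F[u_0]\,e^{-(2-\gamma)^2\,t}$, which is exactly the claim.

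The only genuine obstacle is the rigorous justification of the dissipation identity and of the admissibility of $v(t,\cdot)$ as a test function in~\eqref{CKN2}: for $\mathrm L^1$ initial data, $v$ need not be smooth enough to interpret $\mathcal I[v]$ pointwise or to differentiate $\mathcal F[v(t,\cdot)]$ classically in $t$. I would handle this by the standard regularization-and-passage-to-the-limit scheme used for the unweighted fast diffusion flow and already implemented in the weighted setting in~\cite{MR2425009,MR2461559,2016arXiv160208315B}: approximate $u_0$ by a sequence $(u_0^n)$ of smooth, strictly positive, well decaying data with $\mathcal F[u_0^n]\to\mathcal F[u_0]$, for which the associated rescaled solutions $v_n$ are smooth and the formal computation is rigorous; then pass to the limit using the $\mathrm L^1(|x|^{-\gamma}\,dx)$--contractivity of~\eqref{Eqn:FD-FP}, the lower semicontinuity of $\mathcal F$ with respect to convergence in the appropriate weighted-$\mathrm L^m$ topology, and dominated convergence on the right-hand side. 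This yields the exponential decay in the full generality stated in the corollary.
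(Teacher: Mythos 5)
Your proposal is correct and matches the paper's intended argument: combine the dissipation identity~\eqref{Eqn:FI} with the entropy--entropy production inequality~\eqref{CKN2} (available for $\gamma\in(0,\gamma^\ast)$ by Theorem~\ref{thm: main-thm-barenblatt-gamma-intro}, since $p=1/(2m-1)\in(1,d/(d-2))$) to get $\tfrac{d}{dt}\mathcal F[v]\le-(2-\gamma)^2\mathcal F[v]$, then apply Gronwall. The paper likewise leaves the justification of~\eqref{Eqn:FI} and of mass conservation to an approximation argument in the spirit of~\cite{BBDGV}, which is exactly the regularization step you outline.
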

The above exponential decay is actually equivalent to~\eqref{CKN2} and henceforth to~\eqref{CKN} with $\C_\gamma=\C^\star_\gamma$ as can be checked by computing $\frac d{dt}\mathcal F[v(t,\cdot)]$ at $t=0$. The free energy $\mathcal F[v]$ is a measure of the distance between $v$ and $\mathfrak B$. Exactly as in the case $\gamma=0$, one can undo the change of variables~\eqref{Eqn:ChangeOfVariables} and write an intermediate asymptotics result based on the Csisz\'ar-Kullback inequality. The method is somewhat classical and will not be developed further in this paper. See for instance~\cite{DD,DT2011} for more details. To prove Corollary~\ref{cor:rateFD}, one has to show that the mass $M$ is conserved along the flow defined by~\eqref{Eqn:FD-FP} and that~\eqref{Eqn:FI} holds: this can be done as in~\cite{BBDGV} when $\gamma=0$.

\smallskip
Before entering in the details, let us mention that the case of the porous media equation with $m>1$ has been more studied than the fast diffusion case $m<1$. We refer the reader \emph{e.g.}~to \cite{GMP,GM13,GM14} for some recent results relating suitable functional inequalities with the asymptotic properties of the solutions.

\medskip Let us introduce some basic notations for the functional spaces. We define the spaces $\Lqgamma$ and $\Hpgamma$, respectively, as the space of all measurable functions $w$ such that $\nrm w{q,\gamma}$ is finite and the space of all measurable functions $w$, with $\nabla w$ measurable, such that $\|w\|_{\Hpgamma}:=\nrm w{p+1,\gamma}+\nrm{\nabla w}2$ is finite. Moreover, we denote as $\Hpgammas(\R^d)$ the subspace of $\Hpgamma$ spanned by radial functions. A simple density argument shows, in particular, that $\D$ is dense in $\Hpgamma$ (Lemma~\ref{prop: density}), so that inequality~\eqref{CKN} holds for any function in $\Hpgamma$ and, as a consequence, $\Hpgamma$ is continuously embedded into $\mathrm L^{2p}_\gamma(\R^d)$.

\medskip The proof of Theorem~\ref{thm: main-thm-barenblatt-gamma-intro} is based on a perturbation method which relies on the fact that, by the results in~\cite{DD}, the optimal functions in the case $\gamma=0$ are radial up to translations. Our strategy is adapted from~\cite{DELT09}, except that we have no Emden-Fowler type transformation that would allow us to get rid of the weights. This has the unpleasant consequence that a fully developed analysis of the convergence of any optimal function for~\eqref{CKN} is needed, based on a concentration-compactness method, as $\gamma\downarrow 0$. We prove that the limit is the radial solution, namely the only one centered at the origin, to the limit problem, although the limit problem is translation invariant. Then we are able to prove that the optimal functions are themselves radially symmetric for $\gamma>0$ sufficiently small. As a consequence, $\C^{\star}_\gamma=\C_\gamma$ for any such~$\gamma$, and the optimal functions are all given by~\eqref{Barenblatt-w} up to a multiplication by a constant and a scaling. Here are the main steps of our approach.

\begin{enumerate}

\item We work with the non-scale-invariant form of~\eqref{CKN} that can be written as
\be{eq: E-funz-energy}
\mathcal E_\gamma[w]:=\frac12\,\nrm{\nabla w}2^2+\frac1{p+1}\,\nrm w{p+1,\gamma}^{p+1}-\mathsf J_\gamma\,\nrm w{2p,\gamma}^{2p\,\theta_\gamma}\ge0\,,
\ee
where $\mathsf J_\gamma$ denotes the optimal constant and
\be{eq: first-def-theta-gamma}
\theta_\gamma:=\frac{d+2-2\,\gamma-p\,(d-2)}{d-\gamma-p\,(d+\gamma-4)}\in(0,1)\,.
\ee
A simple scaling argument given in Section~\ref{sec: prelim} shows that~\eqref{CKN} and~\eqref{eq: E-funz-energy} are equivalent, and relates~$\mathsf J_\gamma$ and~$\C_\gamma$: see~\eqref{I-C}.

\item In Section~\ref{sec: prelim} we establish the compactness of the embedding of $\Hpgamma$ into $\mathrm L^{2p}_\gamma(\R^d)$, which implies the existence of at least one optimal function~$w_\gamma$ for~\eqref{eq: E-funz-energy}. This optimal function solves~\eqref{EL} up to a multiplication by a constant and a scaling. Notice that optimal functions for~\eqref{eq: E-funz-energy} are not necessarily unique, even up to multiplication by constants. For simplicity we shall pick one optimal function for each $\gamma>0$, denote it by $w_\gamma$, but each time we use this notation, one has to keep in mind that it is not \emph{a priori} granted that $w_\gamma$ is uniquely defined. We also adopt the convention that $w_0$ denotes the unique \emph{radial} minimizer, having a suitably prescribed $\mathrm L^{2p}$ norm, corresponding to $\gamma=0$ (see~\cite{DD} for details). Up to a multiplication by a constant and a scaling, $w_0$ is equal to $w_\star$ given by~\eqref{Barenblatt-w}, with $\gamma=0$.

Next, in Section~\ref{sect: stime-apriori}, we prove integrability and regularity estimates for solutions to~\eqref{EL}. We point out that $C^{1,\alpha}$ regularity can be expected only if $\gamma\in(0,1)$, as it can be easily guessed by considering the function $w_\star$, which involves $|x|^{2-\gamma}$ (see Remark~\ref{rem: regularity}).

\item Inequality~\eqref{eq: E-funz-energy} means $\mathcal E_\gamma[w]\ge\mathcal E_\gamma[w_\gamma]=0=\mathcal E_0[w_0]$. Hence we know that
\[\label{eq: E-div-gamma}
\mathcal E_\gamma[w_\gamma]-\mathcal E_0[w_\gamma]\le0\le\mathcal E_\gamma[w_0]-\mathcal E_0[w_0]\,.
\]
The concentration-compactness analysis of Section~\ref{sec: conc-comp} shows that, up to the extraction of subsequences, $\lim_{\gamma\to0}w_\gamma(\cdot+y_\gamma)=w_0(\cdot+y_0)$, where $y_\gamma\in\R^d$ is a suitable translation. By passing to the limit as $\gamma\downarrow0$, we obtain
\[\label{eq: E-div-gamma-lim}
\limsup_{\gamma\to0}\iRd{\(\tfrac1{2p}\,w_\gamma^{2p}\,|x|^{-\gamma}-\tfrac1{p+1}\,w_\gamma^{p+1}\)\log|x|}\le\lim_{\gamma\to0}\frac{\mathcal E_\gamma[{w}_0]-\mathcal E_0[{w}_0]}\gamma\,.
\]
If the r.h.s.~was explicit, finite, this would allow us to deduce that $\{y_\gamma\}$ is bounded. This is not the case because $\mathsf J_\gamma$ appears in the expression of~$\mathcal E_\gamma$. To circumvent this difficulty, we can use a rescaled Barenblatt-type function in place of $w_0$ and get an equivalent formulation in which the r.h.s.~stays bounded. This is done in~Section~\ref{sec: bnd-transl}.
\item Inspired by selection principles in Gamma-convergence methods as in~\cite{AB}, we infer that $y_0$ minimizes the function
\[
y \mapsto \iRd{\(-\frac{w_0^{p+1}}{p+1}+\frac{w_0^{2p}}{2\,p}\)\log|x+y|}\,.
\]
The minimum turns out to be attained exactly at $y=0$, so that $\{w_\gamma\}$ converges to $w_0$. The detailed analysis is carried out in the proof of Proposition~\ref{lem: minimizer}.

\item We prove Theorem~\ref{thm: main-thm-barenblatt-gamma-intro} by contradiction in Section~\ref{sec: radiality}, using the method of~\cite{MR2437030,DELT09}. \emph{Angular derivatives} of $w_\gamma$ are nontrivial if $w_\gamma$ is not radial. By differentiating~\eqref{EL}, one finds that the angular derivatives of $w_\gamma$ belong to the kernel of a suitable operator. Passing to the limit as $\gamma\downarrow0$, we get a contradiction with a spectral gap property of the limit operator.

\end{enumerate}

\medskip Inequality~\eqref{CKN} is a special case of the \emph{Caffarelli-Kohn-Nirenberg} inequalities. Because these inequalities involve weights, symmetry and symmetry breaking are key issues. However, only special cases have been studied so far. We refer to~\cite{Oslo} for a review, to~\cite{FreefemDolbeaultEsteban,0951-7715-27-3-435} for some additional numerical investigations, and to~\cite{1406,DEL2015} for more recent results. Concerning the existence of optimal functions in the \emph{Hardy-Sobolev} case $p=(d-\gamma)/(d-2)$, the reader is invited to read~\cite{Catrina-Wang-01,1005}. We may observe that inequality~\eqref{CKN} has three endpoints for which symmetry is known: the case of the \emph{Gagliardo-Nirenberg inequalities} corresponding to $\gamma=0$, the case of the \emph{Hardy-Sobolev inequalities} with $p=(d-\gamma)/(d-2)$ and $\vartheta=1$ and, as a special case, the \emph{Hardy inequality} with $(p,\gamma)=(1,2)$: see for instance~\cite{DELT09}. No other results specific of~\eqref{CKN} are known so far.

Symmetry issues are difficult problems. In most of the cases, \emph{symmetry breaking} is proved using a spherical harmonics expansion as in~\cite{Catrina-Wang-01,Felli-Schneider-03,DDFT} and linear instability, although an energy method has also been used in \cite{DETT}. For \emph{symmetry}, there is a variety of methods which, however, cover only special cases. Moving plane methods as in~\cite{Chou-Chu-93} or symmetrization techniques like in~\cite{Hor97,MR1734159,DETT} can be applied to establish that the optimal functions are given by~\eqref{Barenblatt-w}, up to multiplications by a constant and scalings, in a certain range of the parameters. Symmetry has also been proved by direct estimates, \emph{e.g.}, in~\cite{DETT,DEL2011}, and recently in~\cite{DEL2015} using rigidity estimates based on heuristics arising from entropy methods in nonlinear diffusion equations. Beyond the range covered by symmetrization and moving planes techniques, the best established method relies on \emph{perturbation techniques} that have been used in~\cite{MR2001882,Lin-Wang-04,MR2437030,DELT09}. In the present paper, we shall argue by perturbation, with new difficulties due to the translation invariance of the limiting problem.

\section{Preliminary results}\label{sec: prelim}

\subsection{Interpolation}

We denote by $\dot{\mathrm H}^1(\R^d)$ the closure of $\mathcal{D}(\R^d)$ w.r.t.~the norm $w\mapsto\nrm{\nabla w}2$. Assume that $d\ge3$. It is well known that for all $\gamma\in[0,2]$ there exists a positive constant $\mathsf C_{\rm HS}$ such that the Hardy-Sobolev inequality
\be{eq: sobolev-hardy-H}
\nrm w{2^\ast_\gamma,\gamma}\le\mathsf C_{\rm HS}\,\nrm{\nabla w}2\quad\forall\,w\in\dot{\mathrm H}^1(\R^d)
\ee
holds, where the exponent $2^\ast_\gamma$ has been defined in~\eqref{eq: parameters}. Let $2^\ast=2^\ast_0=\frac{2\,d}{d-2}$. For $\gamma=0$ and $\gamma=2$, we recover respectively the Sobolev inequality
\be{eq: sobolev-H}
\nrm w{2^\ast}\le\mathsf C_{\rm S}\,\nrm{\nabla w}2\quad\forall\,w\in\dot{\mathrm H}^1(\R^d)
\ee
and the Hardy inequality
\be{eq: hardy-H}
\nrm w{2,2}\le\mathsf C_{\rm H}\,\nrm{\nabla w}2\quad\forall\,w\in\dot{\mathrm H}^1(\R^d)\,.
\ee
A H\"older interpolation shows that $\nrm w{2^\ast_\gamma,\gamma}\le\nrm w{2,2}^{\frac\gamma2\frac{d-2}{d-\gamma}}\,\nrm w{2^\ast}^{\frac d2\frac{2-\gamma}{d-\gamma}}$ and hence
\[
\mathsf C_{\rm HS}\le\mathsf C_{\rm H}^{\frac\gamma2\frac{d-2}{d-\gamma}}\,\mathsf C_{\rm S}^{\frac d2\frac{2-\gamma}{d-\gamma}}
\]
for any $\gamma\in[0,2]$. The best constant in~\eqref{eq: sobolev-H} has been identified in~\cite{Aub,Tal} and it is well known that $\mathsf C_{\rm H}=2/(d-2)$. According to~\cite{Chou-Chu-93,Hor97,DELT09}, symmetry holds so that $\mathsf C_{\rm HS}$ is easy to compute using the optimal function $w_\star$ defined by~\eqref{Barenblatt-w} with $p=2^\ast_\gamma/2=(d-\gamma)/(d-2)$, for any $\gamma\in(0,2]$. The H\"older interpolation $\nrm w{2p,\gamma}\le\nrm w{2^\ast_\gamma,\gamma}^\vartheta\,\nrm w{p+1,\gamma}^{1-\vartheta}$ with $\vartheta$ as in~\eqref{eq: theta-2} shows that the Caffarelli-Kohn-Nirenberg inequality~\eqref{CKN} holds with
\be{eq: CKN-originaria-interp}
\C_\gamma\le\big(\mathsf C_{\rm HS}\big)^\vartheta\,.
\ee

\subsection{Scalings and Euler-Lagrange equation}\label{Sec: Scalings and EL}
Consider the following functional:
\[\label{eq: funcQ}
\RF_\gamma[w]:=\frac12\,\nrm{\nabla w}2^2+\frac1{p+1}\,\nrm w{p+1,\gamma}^{p+1}\quad\forall\,w\in\Hpgamma\,.
\]
Inequality~\eqref{eq: E-funz-energy} amounts to
\[
\mathsf J_\gamma=\frac1{M^{\theta_\gamma}}\,\inf\Big\{\RF_\gamma[w]\,:\,w\in\Hpgamma\,,\;\nrm w{2p,\gamma}^{2p}=M\Big\}
\]
for any $M>0$ and we shall consider the problem of the existence of an optimal function $w_\gamma$, that is, the existence of $w_\gamma$ such that
\be{prob: min}
\RF_\gamma[w_\gamma]=\mathsf J_\gamma\,M^{\theta_\gamma}\,,\quad\,w_\gamma\in\Hpgamma\,,\quad\nrm{w_\gamma}{2p,\gamma}^{2p}=M\,,
\ee
where $\theta_\gamma$ is defined by~\eqref{eq: first-def-theta-gamma}. Let us check that $\mathsf J_\gamma$ is independent of $M$. The scaling defined by
\[
w^\lambda(x):=\lambda^\frac{d-\gamma}{2\,p}\,w(\lambda\,x)\quad\forall\,x\in\R^d\,, \quad \textrm{with} \lambda>0\,,
\]
is such that $\nrm{w^\lambda}{2p,\gamma}=\nrm w{2p,\gamma}$, while a change of variables gives
\[
\RF_\gamma[w^\lambda]=\frac 12\,\lambda^{\frac{d-\gamma}p-(d-2)}\,\nrm{\nabla w}2^2+\frac1{p+1}\,\lambda^{-\,\frac{p-1}{2\,p}\,(d-\gamma)}\,\nrm w{p+1,\gamma}^{p+1}\,.
\]
An optimization on $\lambda>0$ shows that
\[
\min_{\lambda>0}\RF_\gamma[w^\lambda]=\kappa\,\(\nrm w{2p,\gamma}^{2p}\,\mathcal Q_\gamma^{2p}[w]\)^{\theta_\gamma}
\]
for some positive, explicit constant $\kappa$ which continuously depends on $p$, $d$ and $\gamma\in[0,d-(d-2)\,p)$. This proves that
\be{I-C}
\mathsf J_\gamma=\kappa\,\C_\gamma^{-2\,p\,\theta_\gamma}
\ee
is indeed independent of $M$.

It is clear from the above analysis that, up to a multiplication of the function $w$ by a constant, we can fix $M>0$ arbitrarily. We shall furthermore assume that $w_\gamma$ is nonnegative without loss of generality because $|w_\gamma|\in\Hpgamma$, $\RF_\gamma[w_\gamma]=\RF_\gamma[\,|w_\gamma|\,]$ and $\nrm{\,|w_\gamma|\,}{2p,\gamma}=\nrm{w_\gamma}{2p,\gamma}$. By standard arguments, $w_\gamma$ satisfies the semilinear Euler-Lagrange equation
\[
-\,\Delta w_\gamma+|x|^{-\gamma}\(w_\gamma^p-\mu\,w_\gamma^{2p-1}\)=0\quad\mbox{in}\quad\R^d
\]
in distributional sense, for some positive \emph{Lagrange multiplier} $\mu=\mu(M)$. From the scaling properties of $\RF_\gamma$, we find that
\[
\mu(M)=2\,p\,\theta_\gamma\,\mathsf J_\gamma\,M^{\theta_\gamma-1}\,.
\]
Hence we can always take $\mu$ equal to $1$ by choosing
\be{M-I}
M=\(2\,p\,\theta_\gamma\,\mathsf J_\gamma\)^{1/(1-\theta_\gamma)}\,.
\ee
{}From now on, \emph{$w_\gamma$ denotes a solution to~\eqref{prob: min} satisfying the above mass condition and solving the equation}
\be{prob: Eul-Lag-one}
-\,\Delta w_\gamma+|x|^{-\gamma}\(w_\gamma^p-w_\gamma^{2p-1}\)=0\quad\mbox{in}\quad\R^d\,.
\ee
In Section~\ref{sec: bnd-transl}, however, we will use a scaling in order to argue with a different choice of mass for $\gamma=0$.

Using a change of variables and uniqueness results that can be found for instance in~\cite{MR1647924} (also see earlier references therein), we know that the radial \emph{ground state}, that is, any \emph{radial} positive solution converging to $0$ as $|x|\to\infty$, is actually unique (see Lemma~\ref{Lem: final-teo-barenblatt}). Let us define
\[
\eta:=d-\gamma-p\,(d-2)\,,\quad a_\gamma:=\frac{(2-\gamma)\,\eta}{(p-1)^2}\quad\mbox{and}\quad b_\gamma:=\frac{\eta^2}{p\,(p-1)^2}
\]
for any $\gamma\in[0,2)$. The function $w_\star$ defined by~\eqref{Barenblatt-w} solves
\[
-\Delta w_\gamma+\frac{a_\gamma}{|x|^\gamma}\(w_\gamma^p-\frac{a_\gamma}{b_\gamma}\,w_\gamma^{2p-1}\)=0\,.
\]
The rescaled function
\[
w_\gamma^\star(x)=\(\frac{a_\gamma}{b_\gamma}\)^\frac1{p-1}\,w_\star\(b_\gamma^{-\frac1{2-\gamma}}\,x\)
\]
solves~\eqref{prob: Eul-Lag-one} and is explicitly given by
\be{eq: barenblatt-weight-intro}
w_\gamma^\star(x):=\(\frac{a_\gamma}{b_\gamma+|x|^{2-\gamma}}\)^{\frac1{p-1}}\,.
\ee
With these preliminaries in hand, we can state a slightly stronger version of Theorem~\ref{thm: main-thm-barenblatt-gamma-intro}.
\begin{thm}\label{thm: main-thm-barenblatt-gamma} Let $d\ge3$. Then for any $p\in(1,d/(d-2))$ there exists $\gamma^\ast=\gamma^\ast(p,d)\in(0,d-(d-2)\,p)$ such that $w_\gamma$ exists and is equal to $w_\gamma^\star$ for all $\gamma\in(0,\gamma^\ast)$.\end{thm}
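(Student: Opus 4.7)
The plan is to follow the five-step strategy sketched in the introduction. For \emph{Step~1}, the compactness of the embedding $\Hpgamma \hookrightarrow \Lqgamma$ with $q = 2p$ follows from a Rellich argument on balls combined with a tail estimate obtained by H\"older interpolation between $\mathrm L^{p+1}_\gamma$ and $\mathrm L^{2^\ast_\gamma}_\gamma$, using~\eqref{eq: sobolev-hardy-H}. Direct minimization of $\RF_\gamma$ under the mass constraint~\eqref{prob: min}, with the normalization~\eqref{M-I}, then yields a nonnegative optimizer $w_\gamma$ solving~\eqref{prob: Eul-Lag-one}, and a standard elliptic bootstrap gives $w_\gamma \in \mathrm L^\infty(\R^d)$ with polynomial decay at infinity and local $C^{1,\alpha}$ regularity away from the origin.

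\emph{Steps~2--3 (concentration-compactness and translation selection).} For any sequence $\gamma_n \downarrow 0$, the identity $\mathcal E_{\gamma_n}[w_{\gamma_n}] = 0$ together with the continuity $\gamma \mapsto \mathsf J_\gamma$ (itself following from~\eqref{eq: CKN-originaria-interp} and a matching lower bound via truncation) yields uniform bounds on $\nrm{\nabla w_{\gamma_n}}2$, $\nrm{w_{\gamma_n}}{p+1,\gamma_n}$ and $\nrm{w_{\gamma_n}}{2p,\gamma_n}$. A Lions concentration-compactness analysis rules out dichotomy (by strict subadditivity in the unweighted limit problem) and vanishing (excluded by the mass), so that for suitable translations $y_{\gamma_n} \in \R^d$ the sequence $w_{\gamma_n}(\cdot + y_{\gamma_n})$ converges strongly in $\Hpgamma$, up to subsequence, to a Gagliardo--Nirenberg optimizer, which by \cite{DD} must be of the form $w_0(\cdot + y_0)$. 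To pin down $y_0$, one writes
\[
\mathcal E_{\gamma_n}[w_{\gamma_n}] - \mathcal E_0[w_{\gamma_n}] \le 0 \le \mathcal E_{\gamma_n}[w_0] - \mathcal E_0[w_0]\,,
\]
observes that the right-hand side is not directly tractable because of the unknown $\gamma$-dependence of $\mathsf J_\gamma$, and replaces $w_0$ by a rescaled Barenblatt-type reference whose mass is adjusted so as to cancel the $\mathsf J_\gamma$-contribution to leading order; this is carried out in Section~\ref{sec: bnd-transl}. Expanding $|x|^{-\gamma} = 1 - \gamma \log|x| + o(\gamma)$, dividing by $\gamma_n$ and passing to the limit, a selection principle \`a la \cite{AB} shows that $y_0$ minimizes
\[
\Phi(y) := \iRd{\(\frac{w_0^{2p}}{2p} - \frac{w_0^{p+1}}{p+1}\) \log|x+y|}\,.
\]
Strict minimality of $y = 0$, established in Proposition~\ref{lem: minimizer} by a radial analysis exploiting the Euler--Lagrange equation satisfied by $w_0$, then forces $y_{\gamma_n} \to 0$, so the whole family $w_\gamma$ converges to $w_0$ as $\gamma \downarrow 0$.

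\emph{Step~4 (spectral-gap rigidity).} Assume by contradiction that $w_{\gamma_n}$ is non-radial along some $\gamma_n \downarrow 0$. For a rotational vector field $X = x_k \partial_\ell - x_\ell \partial_k$ one has $X(|x|^{-\gamma_n}) = 0$, so differentiating~\eqref{prob: Eul-Lag-one} gives that $f_n := X w_{\gamma_n}$, nontrivial by assumption, satisfies
\[
- \Delta f_n + |x|^{-\gamma_n}\(p\, w_{\gamma_n}^{p-1} - (2p-1)\, w_{\gamma_n}^{2p-2}\) f_n = 0\,.
\]
After proper renormalization and extraction of a subsequence, $f_n$ converges to a nontrivial limit $f_\infty$ in the kernel of the linearized operator $\mathcal L_0 := -\Delta + p\, w_0^{p-1} - (2p-1)\, w_0^{2p-2}$ at $w_0$. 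A spherical-harmonics decomposition confines $f_\infty$ to modes of degree $\ell \ge 2$, orthogonal to the translation kernel $\mathrm{span}\{\partial_k w_0\}$ of $\mathcal L_0$, which sits at degree $\ell = 1$; the strictly positive spectral gap of $\mathcal L_0$ on the orthogonal of the translation modes, a consequence of the Del Pino--Dolbeault rigidity for Gagliardo--Nirenberg, then forces $f_\infty = 0$, a contradiction. Hence $w_\gamma$ is radial for $\gamma$ below some threshold $\gamma^\ast$, and the uniqueness statement for the radial ground state (Lemma~\ref{Lem: final-teo-barenblatt}) identifies $w_\gamma$ with the explicit profile $w_\gamma^\star$ given by~\eqref{eq: barenblatt-weight-intro}.

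\emph{Main obstacle.} The principal technical hurdle is the translation-selection step: because the limit problem is translation-invariant, concentration-compactness alone only identifies $w_0$ up to an unknown translation, and one is forced to work at order $\gamma$ in the energy expansion. The implicit dependence of $\mathsf J_\gamma$ on $\gamma$ obstructs a direct Taylor expansion, and the remedy — an adapted rescaled Barenblatt reference combined with a $\Gamma$-convergence-type selection principle and the nontrivial strict-minimality of $\Phi$ at the origin — is what makes the argument work.
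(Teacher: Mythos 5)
Your Steps 1--3 match the paper's approach: compactness via equi-integrability at infinity, existence of $w_\gamma$, uniform bounds, concentration-compactness excluding vanishing and dichotomy, and translation selection via a rescaled Barenblatt reference so that the unknown $\gamma$-dependence of $\mathsf J_\gamma$ cancels, followed by strict minimality of $\Phi$ at the origin. All of that is the right structure.

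Step~4, however, has a genuine gap. You assert that a spherical-harmonics decomposition ``confines $f_\infty$ to modes of degree $\ell\ge2$, orthogonal to the translation kernel.'' This is not true and is in fact the opposite of what the paper shows. If $w_{\gamma_n}$ deviates from $w_0$ primarily through translation modes (degree $\ell=1$), the angular derivative $f_n = X w_{\gamma_n}$ still has a nontrivial degree-$1$ component: rotations preserve degree in the spherical-harmonics decomposition, and explicitly $X(\partial_j w_0)$ is a linear combination of $\partial_k w_0$ and $\partial_\ell w_0$. After normalization, the limit $\omega$ of the angular derivatives is precisely a translation mode $a\cdot\nabla w_0$ with $a\neq0$; that is the conclusion of Lemma~\ref{lem: hardy-poinc} applied to the limiting Hardy--Poincar\'e inequality, not something to be ruled out by a spectral gap. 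The contradiction in the paper comes from a \emph{second} orthogonality constraint specific to the weighted problem: comparing the linearized equation satisfied by the angular derivative $\omega_n = \nabla_{\mathsf A_n} w_{\gamma_n}$ with the one satisfied by the directional derivative $\mathsf e\cdot\nabla w_{\gamma_n}$, the mismatch coming from differentiating the weight $|x|^{-\gamma_n}$ in the radial direction produces the identity
\[
\iRd{\frac{x\cdot\mathsf e}{|x|^{\gamma_n+2}}\left(w_{\gamma_n}^p - w_{\gamma_n}^{2p-1}\right)\omega_n} = 0\,,
\]
which passes to the limit and, together with $\omega = a\cdot\nabla w_0$, forces $\iRd{|x|^{-2}\left(\tfrac{w_0^{2p}}{2p}-\tfrac{w_0^{p+1}}{p+1}\right)}=0$. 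The positivity of this integral (established by the same radial argument that shows strict minimality of $\Phi$ at the origin) gives the contradiction and hence $a=0$, whence $\omega_n$ is trivial. Without this extra constraint, your argument cannot close: the spectral gap on the orthogonal complement of the translation modes is irrelevant because $\omega$ lands squarely inside that span.
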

Recall that $w_\gamma$ is a solution to~\eqref{prob: min} such that~\eqref{M-I} and~\eqref{prob: Eul-Lag-one} hold. All other solutions to~\eqref{prob: min} can be deduced using multiplication by constants. We shall prove various intermediate results in Sections~\ref{sect: stime-apriori}--\ref{sec: bnd-transl} and complete the proof of Theorem~\ref{thm: main-thm-barenblatt-gamma} in Section~\ref{sec: radiality}. As mentioned above, it is not restrictive to work with $w_\gamma\ge0$ and we shall therefore consider only nonnegative functions, without further notice. Theorem~\ref{thm: main-thm-barenblatt-gamma} is stronger than Theorem~\ref{thm: main-thm-barenblatt-gamma-intro} because it characterizes all optimal functions and not only the value of the optimal constant. Notice that the existence result of $w_\gamma$ does not require restrictions on $\gamma\in(0,2)$: see Proposition~\ref{eq: teo-exist}.

\subsection{Density and compactness results}
\begin{lem}\label{prop: density} Let $p$, $d$ satisfy~\eqref{eq: parameters}. Then $\mathcal D(\R^d\setminus\{0\})$ is dense in~$\Hpgamma$.\end{lem}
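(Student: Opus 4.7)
My plan is to argue by the classical truncation–mollification scheme, adapted to the weight $|x|^{-\gamma}$ via two separate cut-offs: one near the origin (to move the support into $\R^d\setminus\{0\}$) and one at infinity (to make it compact).

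Given $w\in\Hpgamma$, I would first observe that $w$ automatically lies in $\dot{\mathrm H}^1(\R^d)$: its distributional gradient belongs to $\mathrm L^2$, so by a standard Sobolev-type result in dimension $d\ge 3$, $w$ agrees a.e.~with a function in $\mathrm L^{2^\ast}(\R^d)$ up to an additive constant. This constant must vanish, since $w\in\mathrm L^{p+1}_\gamma(\R^d)$ and the weight $|x|^{-\gamma}$ is not integrable at infinity when $\gamma<d$. As a consequence, both the Hardy inequality $\nrm{w/|x|}{2}\le\tfrac 2{d-2}\,\nrm{\nabla w}{2}$ and the Sobolev inequality~\eqref{eq: sobolev-H} become available for $w$ itself.

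Next, for $0<\varepsilon<1<R$, I would introduce a smooth cut-off $\chi_{\varepsilon,R}$ supported in $B_{2R}\setminus\overline{B_\varepsilon}$, equal to $1$ on $B_R\setminus\overline{B_{2\varepsilon}}$, with $|\nabla\chi_{\varepsilon,R}|\le C/\varepsilon$ on the inner annulus and $|\nabla\chi_{\varepsilon,R}|\le C/R$ on the outer one. Convergence of $\chi_{\varepsilon,R}\,w$ to $w$ in $\mathrm L^{p+1}_\gamma$ as $\varepsilon\downarrow 0$ and $R\to\infty$ follows by dominated convergence, using only $|w|^{p+1}|x|^{-\gamma}\in\mathrm L^1(\R^d)$. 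For the gradients I would split
\[
\nabla\!\bigl(w-\chi_{\varepsilon,R}\,w\bigr)=(1-\chi_{\varepsilon,R})\,\nabla w-w\,\nabla\chi_{\varepsilon,R}\,,
\]
and see the first summand vanish in $\mathrm L^2$ by dominated convergence, while the second is controlled using Hardy on the inner annulus (via $|x|^{-2}\ge 1/(4\,\varepsilon^2)$ on $\{\varepsilon\le|x|\le 2\,\varepsilon\}$) and using H\"older combined with Sobolev on the outer one (via $|B_{2R}\setminus B_R|^{1-2/2^\ast}\sim R^2$ together with the $\mathrm L^{2^\ast}$ tail of $w$).

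Finally, since $w_{\varepsilon,R}:=\chi_{\varepsilon,R}\,w$ has compact support in $\R^d\setminus\{0\}$, the weight $|x|^{-\gamma}$ is bounded on a neighbourhood of this support, and a standard mollification $w_{\varepsilon,R}*\rho_\eta$ with $\eta<\varepsilon/2$ yields elements of $\mathcal D(\R^d\setminus\{0\})$ that converge to $w_{\varepsilon,R}$ both in $\mathrm L^{p+1}_\gamma$ and in $\dot{\mathrm H}^1$. A diagonal extraction in $(\varepsilon,R,\eta)$ then concludes the argument. The main obstacle sits in the cut-off error $w\,\nabla\chi_{\varepsilon,R}$: a direct H\"older estimate based only on the weighted $\mathrm L^{p+1}_\gamma$ norm fails in the range $p<(d-\gamma)/(d-2)$, so one really has to establish the embedding $\Hpgamma\hookrightarrow\dot{\mathrm H}^1(\R^d)$ \emph{before} truncating, in order to be entitled to the Hardy and Sobolev inequalities at the critical step.
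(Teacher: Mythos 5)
Your proof is correct but follows a genuinely different route from the paper's at the key step, namely the control of the cut-off error $w\,\nabla\chi_{\varepsilon,R}$. The paper first reduces to the case $w\in\mathrm L^\infty(\R^d)$ by a truncation at height $A$ (which converges both in $\mathrm L^{p+1}_\gamma$ and in the gradient $\mathrm L^2$ norm by dominated convergence); it then controls the inner annulus contribution by the elementary bound $n^2\,\|w\|_\infty^2\int_{1/n}^{2/n}r^{d-1}\,dr\to 0$, using only $d\ge3$, and the outer annulus contribution by a weighted H\"older estimate involving $\|w\|_{p+1,\gamma}$ alone, which succeeds at infinity precisely because the weight $|x|^{-\gamma}$ is favourable there: the required exponent condition reduces to $p<(d+2-2\gamma)/(d-2)$, which follows from~\eqref{eq: parameters}. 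You instead establish $w\in\dot{\mathrm H}^1(\R^d)$ at the outset, then use Hardy on the inner annulus and H\"older combined with the Sobolev inequality on the outer one. Your observation that a weighted H\"older estimate from $\|w\|_{p+1,\gamma}$ fails for the inner annulus in the given range of $p$ is accurate — that is exactly why the paper first truncates to boundedness — but your concluding remark that one ``really has to'' pass through the embedding $\Hpgamma\hookrightarrow\dot{\mathrm H}^1(\R^d)$ before truncating is too strong: the paper's truncation to $\mathrm L^\infty$ is a more elementary workaround that never invokes Hardy or Sobolev. Your variant is conceptually clean and trades a truncation step for a prior appeal to the sharp functional inequalities; both arguments are sound.
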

\begin{proof} Let us consider some function $w\in\Hpgamma$. The weight $|x|^{-\gamma}$ is, locally in $\R^d\setminus\{0\}$, bounded and bounded away from zero. By standard mollification arguments, one can pick a sequence of functions $\{w_n\}\subset\mathcal D(\R^d\setminus\{0\})$ such that
\[\label{eq: density-1}
\lim_{n\to\infty}\nrm{w-w_n}{{p+1},\gamma}=0\,,\quad\lim_{n\to\infty}\nrm{\nabla w-\nabla w_n}2=0
\]
if $w$ is compactly supported in $\R^d\setminus\{0\}$. Otherwise a simple truncation shows that it is not restrictive to assume, in addition, that $w\in\mathrm L^\infty(\R^d)$. Let $\xi$ be a smooth function such that
\[
0\le\xi(x)\le1\quad\forall\,x\in\R^d\,,\quad\xi(x)=1\quad\forall\,x\in B_1\,,\quad\xi(x)=0\quad\forall\,x\in B_2^c\,,
\]
where $B_r:=B_r(0)$ and consider
\[\label{eq: density-3}
w_n:=\xi_n\,w\,,\quad\xi_n(x):=\(1-\xi\(n\,x\)\)\,\xi\({x}/{n}\)\quad\forall\,x\in\R^d\,,
\]
for any $n\ge2$. It is clear that $\lim_{n\to\infty}\nrm{w-w_n}{p+1,\gamma}=0$ by dominated convergence. As for $\nabla w_n=w\,\nabla\xi_n+\xi_n\nabla w$, we get that \hbox{$\lim_{n\to\infty}\nrm{\nabla w-\xi_n\nabla w}2=0$} again by dominated convergence so that density is proven as soon as we establish that \hbox{$\lim_{n\to\infty}\nrm{w\,\nabla\xi_n}2=0$}. This follows from
\[
\nrm{w\,\nabla\xi_n}2^2\le|\mathbb S^{d-1}|\,\nrm{\nabla\xi}\infty^2\,\nrm w\infty^2\,n^2\int_{1/n}^{2/n} r^{d-1}\,dr\,+\,\frac{\nrm{\nabla\xi}\infty^2}{n^2}\int_{n\le|x|\le2n}w^2\,dx\,.
\]
Since $d\ge3$, the first term in the r.h.s.~vanishes as $n\to\infty$. As for the second term, we get
\begin{multline*}
\int_{n\le|x|\le2n}w^2\,dx\le(2\,n)^{\frac{2\,\gamma}{p+1}}\int_{n\le|x|\le2n}w^2\,|x|^{-\frac{2\,\gamma}{p+1}}\,dx\\
\le(2\,n)^{\frac{2\,\gamma}{p+1}}\,\nrm w{p+1,\gamma}^2\(|\mathbb S^{d-1}|\int_n^{2n}r^{d-1}\,dr\)^{\frac{p-1}{p+1}}
\end{multline*}
by H\"older's inequality and the r.h.s.~goes to zero as $n\to\infty$ because $\frac{2\,\gamma}{p+1}+d\,\frac{p-1}{p+1}<2$ for any $p<(d-\gamma)/(d-2)$.
\qed
\end{proof}

\begin{lem}\label{lem: local-compactness} Let $d\ge3$ and $\gamma\in[0,2)$. Then $\dot{\mathrm H}^1(\R^d)$ is \emph{locally} compactly embedded in $\mathrm L^q_\gamma(\R^d)$ for all $q\in[1,2^\ast_\gamma)$.
\end{lem}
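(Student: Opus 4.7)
The plan is to reduce to standard Rellich-Kondrachov compactness by absorbing the weight $|x|^{-\gamma}$ via a Hölder estimate. Fix $R>0$ and a bounded sequence $\{w_n\}\subset\dot{\mathrm H}^1(\R^d)$; up to multiplication by cutoffs we may work on the ball $B_R$. If $\gamma=0$ the statement is the classical Rellich-Kondrachov theorem (combined with the Sobolev embedding $\dot{\mathrm H}^1(\R^d)\hookrightarrow\mathrm L^{2^\ast}(\R^d)$ which controls $\mathrm L^s(B_R)$ for every $s\in[1,2^\ast]$), so I restrict attention to $\gamma\in(0,2)$.

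First I would choose an auxiliary Hölder exponent $r>1$ such that
\[
r>\frac d{d-\gamma}\qquad\text{and}\qquad q\,r<2^\ast=\frac{2d}{d-2}\,.
\]
Such a choice is possible precisely because $q<2^\ast_\gamma=2(d-\gamma)/(d-2)$: these two constraints on $r$ are compatible iff $q\cdot d/(d-\gamma)<2d/(d-2)$. With its conjugate exponent $r'=r/(r-1)$ one then has $\gamma\,r'<d$, so that $|x|^{-\gamma}\in\mathrm L^{r'}(B_R)$.

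Next I would use the Sobolev embedding together with Poincaré on $B_R$ (applied after subtracting means if needed, but here we just combine $\nrm{w_n}{2^\ast(B_R)}$ with $\nrm{w_n}{\mathrm L^2(B_R)}$ bounds coming from Hölder) to see that $\{w_n\}$ is bounded in $\mathrm H^1(B_R)$. By the classical Rellich-Kondrachov theorem this embeds compactly into $\mathrm L^{qr}(B_R)$, since $qr<2^\ast$. Extract a subsequence, still denoted $\{w_n\}$, with $w_n\to w$ strongly in $\mathrm L^{qr}(B_R)$.

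Finally I would estimate, via Hölder's inequality with exponents $r$ and $r'$,
\[
\int_{B_R}|w_n-w|^q\,|x|^{-\gamma}\,dx\;\le\;\(\int_{B_R}|w_n-w|^{qr}\,dx\)^{\!1/r}\(\int_{B_R}|x|^{-\gamma r'}\,dx\)^{\!1/r'}\!,
\]
and observe that the second factor is finite by the choice of $r$, while the first tends to $0$. A standard diagonal argument over $R=1,2,\dots$ then yields local compactness in $\mathrm L^q_\gamma(\R^d)$. The only non-routine point is the selection of $r$; once this is done the argument is purely mechanical, so I do not expect any serious obstacle.
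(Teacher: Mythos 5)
Your proof is correct, but it takes a genuinely different route from the paper's. The paper interpolates the weighted $\mathrm L^q_\gamma$ norm on a small ball $B_r$ between the weighted $\mathrm L^2_\gamma$ and $\mathrm L^{2^\ast_\gamma}_\gamma$ norms (resp.~between $\mathrm L^2_\gamma$ and a volume factor for $q\in[1,2]$), then uses the Hardy and Hardy--Sobolev inequalities \eqref{eq: hardy-H}, \eqref{eq: sobolev-hardy-H} to extract an explicit positive power of $r$. This gives a \emph{uniform equi-integrability estimate near the origin} for the whole sequence, after which the singular weight is harmless and one concludes by the usual local compactness of subcritical Sobolev embeddings away from $0$. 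You instead absorb the weight by a single H\"older application: choosing $r\in\big(d/(d-\gamma),\,2^\ast/q\big)$ --- a nonempty interval precisely because $q<2^\ast_\gamma$ --- gives $\gamma\,r'<d$ so that $|x|^{-\gamma}\in\mathrm L^{r'}(B_R)$, and then strong $\mathrm L^{qr}(B_R)$ convergence from Rellich--Kondrachov (which applies since $qr<2^\ast$) is upgraded to strong $\mathrm L^q_\gamma(B_R)$ convergence by H\"older. Both arguments are sound. Your version is more mechanical and avoids any discussion of the singularity; the paper's version produces explicit quantitative decay rates in $r$ which can be useful elsewhere (and indeed is phrased as ``equi-integrability'', a form that plugs directly into later concentration-compactness arguments). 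One small remark: the mention of Poincar\'e is superfluous --- on a bounded ball, H\"older already gives the $\mathrm L^2(B_R)$ bound from the Sobolev-controlled $\mathrm L^{2^\ast}(B_R)$ bound; and the final diagonal argument is also not strictly needed, since the paper's definition of local compact embedding only requires relative compactness of $\{\chi\,w_n\}$ for each fixed compact set, not a single subsequence working for all $R$ simultaneously.
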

\emph{Locally} compactly embedded means that for any bounded sequence $\{w_n\}\subset\dot{\mathrm H}^1(\R^d)$, the sequence $\{\chi\,w_n\}$ is relatively compact in $\mathrm L^q_\gamma(\R^d)$ for any characteristic function $\chi$ of a compact set in $\R^d$.

\begin{proof} As a direct consequence of~\eqref{eq: sobolev-hardy-H} and~\eqref{eq: hardy-H}, we have the equi-integrability estimates
\begin{eqnarray*}\label{Equintegrability1}
\int_{B_r}|w|^q\,|x|^{-\gamma}\,dx&\le&\(\int_{B_r}w^2\,|x|^{-\gamma}\,dx\)^{\frac{2^\ast_\gamma-q}{2^\ast_\gamma-2}}\(\int_{B_r}|w|^{2^\ast_\gamma}\,|x|^{-\gamma}\,dx\)^{\frac{q-2}{2^\ast_\gamma-2}}\\
&\le&\;r^{(2-\gamma)\,\frac{2^\ast_\gamma-q}{2^\ast_\gamma-2}}\,\mathsf C_{\rm H}^{2\,\frac{2^\ast_\gamma-q}{2^\ast_\gamma-2}}\,\mathsf C_{\rm HS}^{2^\ast_\gamma\,\frac{q-2}{2^\ast_\gamma-2}}\,\nrm{\nabla w}2^q \quad\forall\,r>0
\end{eqnarray*}
for any $q\in[2,2^\ast_\gamma)$, and
\begin{eqnarray*}
\int_{B_r}|w|^q\,|x|^{-\gamma}\,dx&\le&\(\int_{B_r}w^2\,|x|^{-\gamma}\,dx\)^\frac q2\(|\mathbb S^{d-1}|\int_0^rs^{d-1-\gamma}\,ds\)^{1-\frac q2}\\
&\le&\;r^{(2-\gamma)\,\frac q2+(d-\gamma)\left(1-\frac q2\right)}\,{\mathsf C_{\rm H}}^q\,\nrm{\nabla w}2^q\(\tfrac1{d-\gamma}\,|\mathbb S^{d-1}|\)^{1-\frac q2}\quad\kern-6pt\forall\,r>0\label{Equintegrability2}
\end{eqnarray*}
for any $q\in[1,2]$. The result follows from the well-known local compactness of subcritical Sobolev embeddings (see, \emph{e.g.},~\cite[Theorem 7.22]{GT}).
\qed
\end{proof}

\begin{pro}\label{prop: compactness} Let $p$, $d$ satisfy~\eqref{eq: parameters}. Then $\Hpgamma$ is compactly embedded in $\mathrm L^{2p}_\gamma(\R^d)$.\end{pro}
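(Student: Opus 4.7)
My plan is to prove compactness by combining the local compactness of Lemma~\ref{lem: local-compactness} with uniform tightness of the $\mathrm L^{2p}_\gamma$ mass near the origin and at infinity. Given a bounded sequence $\{w_n\}\subset\Hpgamma$, I would first extract, by reflexivity of $\dot{\mathrm H}^1(\R^d)$ and $\mathrm L^{p+1}_\gamma(\R^d)$, a weakly convergent subsequence $w_n\rightharpoonup w$ in both spaces; Sobolev's inequality~\eqref{eq: sobolev-H} gives a uniform $\mathrm L^{2^\ast}(\R^d)$ bound as well, with $2^\ast=2\,d/(d-2)$. Since $p\in(1,2^\ast_\gamma/2)$ yields $2p\in(2,2^\ast_\gamma)$, Lemma~\ref{lem: local-compactness} then provides, along a further subsequence, strong convergence $w_n\to w$ in $\mathrm L^{2p}_\gamma(B_R\setminus B_\delta)$ for every $0<\delta<R$.

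Tightness near the origin is a direct consequence of the first estimate in the proof of Lemma~\ref{lem: local-compactness} applied with $q=2p$, which yields a positive power of $\delta$ times a uniform constant. The main step is tightness at infinity, which I carry out by a weighted interpolation inequality. Since $\gamma>0$ forces $2p<2^\ast_\gamma<2^\ast$, I set
\[
t:=\frac{p-1}{2^\ast-p-1}\in(0,1)\,,
\]
which satisfies $(1-t)(p+1)+t\,2^\ast=2p$ and yields the pointwise factorisation
\[
|w|^{2p}\,|x|^{-\gamma}=\(|w|^{p+1}\,|x|^{-\gamma}\)^{1-t}\,\(|w|^{2^\ast}\,|x|^{-\gamma}\)^{t}\,.
\]
H\"older's inequality with conjugate exponents $1/(1-t)$ and $1/t$ then gives
\[
\int_{B_R^c}|w_n|^{2p}\,|x|^{-\gamma}\,dx\le\(\int_{B_R^c}|w_n|^{p+1}\,|x|^{-\gamma}\,dx\)^{1-t}\(\int_{B_R^c}|w_n|^{2^\ast}\,|x|^{-\gamma}\,dx\)^{t}\,,
\]
and the pointwise bound $|x|^{-\gamma}\le R^{-\gamma}$ on $B_R^c$ combined with~\eqref{eq: sobolev-H} estimates the second factor by $R^{-\gamma t}\,\mathsf C_{\rm S}^{2^\ast t}\,\|\nabla w_n\|_2^{2^\ast t}$. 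The overall estimate is of the form $C\,R^{-\gamma t}$ with $C$ depending only on $\sup_n\|w_n\|_{\Hpgamma}$, and hence vanishes uniformly in $n$ as $R\to\infty$.

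With the three ingredients in hand, I close the argument via the triangle splitting
\[
\nrm{w_n-w}{2p,\gamma}\le\nrm{w_n-w}{\mathrm L^{2p}_\gamma(B_\delta)}+\nrm{w_n-w}{\mathrm L^{2p}_\gamma(B_R\setminus B_\delta)}+\nrm{w_n-w}{\mathrm L^{2p}_\gamma(B_R^c)}\,,
\]
choosing first $\delta$ small and $R$ large so that the two tightness estimates (applied to $w_n$ and to the fixed limit $w$) make the first and third terms smaller than $\varepsilon$ uniformly in $n$, and then invoking the local strong convergence of the first paragraph on $B_R\setminus B_\delta$ for $n$ large. The genuine difficulty, and the point where the proof would fail in the absence of the weight, is the tightness at infinity: a uniform bound on $\nrm{w_n}{p+1,\gamma}$ does not by itself yield small $\mathrm L^{p+1}_\gamma$ tails, and what saves the day is routing the missing smallness through the unweighted Sobolev embedding into $\mathrm L^{2^\ast}(\R^d)$ paired with the decay of the weight $|x|^{-\gamma}$ for large $|x|$.
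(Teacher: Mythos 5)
Your proof is correct and follows essentially the same route as the paper: local compactness from Lemma~\ref{lem: local-compactness} combined with tightness at infinity obtained by H\"older interpolation between $\mathrm L^{p+1}_\gamma$ and $\mathrm L^{2^\ast}$, using Sobolev's inequality~\eqref{eq: sobolev-H} and the decay of $|x|^{-\gamma}$; your exponent $t=\frac{p-1}{2^\ast-p-1}$ is exactly $1/q'$ in the paper's notation, so the tail bound is the same. The only cosmetic difference is that you split off a small ball around the origin, whereas Lemma~\ref{lem: local-compactness} already yields compactness on full compact sets containing the origin, so that extra step is unnecessary.
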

\begin{proof} By definition, $\Hpgamma$ is continuously embedded in $\dot{\mathrm H}^1(\R^d)$ and \emph{locally} compactly embedded in $\mathrm L^{2p}_\gamma(\R^d)$ by Lemma~\ref{lem: local-compactness}. Using H\"older's inequality and then Sobolev's inequality~\eqref{eq: sobolev-H} we get
\begin{multline*}\label{eq: Horiuchi-5}
\int_{B^c_R}\frac{|w|^{2p}}{|x|^\gamma}\,dx\le R^{-\frac\gamma{q^\prime}}\(\int_{B^c_R}\frac{|w|^{p+1}}{|x|^\gamma}\,dx\)^{1/q}\(\int_{B^c_R}|w|^{2^\ast}\,dx\)^{1/{q^\prime}}\\
\le R^{-\frac\gamma{q^\prime}}\,\nrm w{p+1,\gamma}^\frac{p+1}q\,(\mathsf C_{\rm S}\,\nrm{\nabla w}2)^\frac{2^\ast}{q^\prime} \quad\forall\,R>0\,,
\end{multline*}
with $q=\frac{2^\ast-p-1}{2^\ast-2\,p}$ and $q^\prime=\frac q{q-1}$, for any $w\in\Hpgamma$, which is an equi-integrability property at infinity. Then \emph{global} compactness follows.
\qed
\end{proof}

\subsection{Existence of optimal functions}
\begin{pro}\label{eq: teo-exist} Let $p$, $d$ satisfy~\eqref{eq: parameters}. For any $M>0$, the minimization problem~\eqref{prob: min} admits at least one solution.\end{pro}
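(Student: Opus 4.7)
The plan is to apply the direct method of calculus of variations, leveraging the compact embedding $\mathcal{H}_{p,\gamma}(\R^d)\hookrightarrow\mathrm L^{2p}_\gamma(\R^d)$ established in Proposition~\ref{prop: compactness}. First, I fix $M>0$ and take a minimizing sequence $\{w_n\}\subset\Hpgamma$ satisfying the constraint $\nrm{w_n}{2p,\gamma}^{2p}=M$ and $\RF_\gamma[w_n]\to\mathsf J_\gamma\,M^{\theta_\gamma}$. Since both $\nrm{\nabla w_n}2^2$ and $\nrm{w_n}{p+1,\gamma}^{p+1}$ are nonnegative summands of $\RF_\gamma[w_n]$, the sequence $\{w_n\}$ is automatically bounded in $\Hpgamma$, which, as a product of the reflexive spaces $\dot{\mathrm H}^1(\R^d)$ and $\mathrm L^{p+1}_\gamma(\R^d)$, is itself reflexive.

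Next, up to extracting a subsequence, there exists $w\in\Hpgamma$ such that $\nabla w_n\rightharpoonup\nabla w$ weakly in $\mathrm L^2(\R^d)$ and $w_n\rightharpoonup w$ weakly in $\mathrm L^{p+1}_\gamma(\R^d)$. By Proposition~\ref{prop: compactness}, the same subsequence converges \emph{strongly} in $\mathrm L^{2p}_\gamma(\R^d)$, so passing to the limit in the constraint yields $\nrm w{2p,\gamma}^{2p}=M$. In particular $w$ is admissible and not identically zero.

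Finally, I invoke weak lower semicontinuity of the norms: $\nrm{\nabla w}2\le\liminf_{n\to\infty}\nrm{\nabla w_n}2$ and $\nrm w{p+1,\gamma}\le\liminf_{n\to\infty}\nrm{w_n}{p+1,\gamma}$. Consequently,
\[
\RF_\gamma[w]\le\liminf_{n\to\infty}\RF_\gamma[w_n]=\mathsf J_\gamma\,M^{\theta_\gamma}\,,
\]
and since $w$ is admissible the reverse inequality holds by definition of $\mathsf J_\gamma$. Hence $w$ is a minimizer solving~\eqref{prob: min}.

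The only genuinely nontrivial ingredient is the preservation of the constraint $\nrm w{2p,\gamma}^{2p}=M$ in the limit: weak convergence alone would merely yield $\nrm w{2p,\gamma}^{2p}\le M$, which could allow mass to escape and collapse the infimum. This is precisely the obstacle that Proposition~\ref{prop: compactness} removes, by upgrading weak convergence in $\Hpgamma$ to strong convergence in $\mathrm L^{2p}_\gamma(\R^d)$; the remainder of the argument is the standard direct method and requires no further input.
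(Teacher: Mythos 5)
Your proof is correct and takes essentially the same route as the paper, which simply invokes coercivity and weak lower semicontinuity of $\RF_\gamma$ on the reflexive space $\Hpgamma$ together with the compact embedding of Proposition~\ref{prop: compactness}; you have spelled out the same direct-method argument in more detail.
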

\begin{proof} The functional $\RF_\gamma$ is coercive and weakly lower semi-continuous on $\Hpgamma$. Since $\Hpgamma$ is a reflexive Banach space, the existence of a solution of~\eqref{prob: min} follows by the compact embedding result of Proposition~\ref{prop: compactness}.
\qed
\end{proof}
\begin{oss}\label{oss: norme-senza-peso} Let $p$, $d$ satisfy~\eqref{eq: parameters}. We have $\mathcal H_{p,0}(\R^d)\subset\Hpgamma$ because
\[
\iRd{\frac{|w|^{p+1}}{|x|^\gamma}}\le\(\int_{B_1}\frac{|w|^{2^\ast_\gamma}}{|x|^\gamma}\,dx\)^{\frac{p+1}{2^\ast_\gamma}}\(\int_{B_1}\frac1{|x|^\gamma}\,dx\)^{1-\frac{p+1}{2^\ast_\gamma}}+\int_{B_1^c} |w|^{p+1}\,dx
\]
for all $w\in\mathcal H_{p,0}(\R^d)$.\end{oss}

\section{\emph{A priori} regularity estimates}\label{sect: stime-apriori}

The aim of this section is to provide regularity estimates of the solutions to the Euler-Lagrange equation~\eqref{prob: Eul-Lag-one}. The first result is based on a Moser iterative method, in the spirit of~\cite{MR0159138,MR0288405}. We recall that $2^\ast_\gamma:=2\,\frac{d-\gamma}{d-2}$ and use the notation $2^\ast=2^\ast_0$. To any $q\ge2^\ast$, we associate $\zeta:=\frac{2^\ast}q+\big(1-\frac{2^\ast}q\big)\,\frac{2^\ast_\gamma-2}{2^\ast_\gamma-2\,p}$.
\begin{lem}\label{stime-Lq-alte} Let $d\ge3$, $p\in(1,2^\ast\!/2)$ and $\gamma\in\big[0,d-(d-2)\,p\big)$. With the above notations, any solution $w_\gamma$ to~\eqref{prob: Eul-Lag-one} satisfies
\[\label{eq: stima-Lq}
\nrm{w_\gamma}q\le C\,\nrm{\nabla w_\gamma}2^\zeta\quad\forall\,q\in[2^\ast,\infty]
\]
for some positive constant $C$ which depends continuously on $\gamma$, $q$ and $p$ and has a finite limit $C(\infty)$ as $q\to\infty$. \end{lem}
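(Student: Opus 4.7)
The plan is to reduce to the endpoint $q=\infty$ by a log-convex interpolation and prove that endpoint by a weighted Moser iteration. Since $\nrm uq\le\nrm u{2^\ast}^{2^\ast/q}\,\nrm u\infty^{1-2^\ast/q}$ for every $q\ge 2^\ast$, combining Sobolev~\eqref{eq: sobolev-H}, $\nrm{w_\gamma}{2^\ast}\le\mathsf C_{\rm S}\nrm{\nabla w_\gamma}2$, with an $\mathrm L^\infty$-bound $\nrm{w_\gamma}\infty\le C_\infty\,\nrm{\nabla w_\gamma}2^{\zeta_\infty}$, where $\zeta_\infty:=(2^\ast_\gamma-2)/(2^\ast_\gamma-2p)=(2-\gamma)/\eta$, automatically produces the exponent $\zeta=2^\ast/q+(1-2^\ast/q)\,\zeta_\infty$ announced in the lemma, together with the continuous dependence in $q$ and the finite limit $C(\infty)=C_\infty$. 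So the whole task boils down to the $\mathrm L^\infty$-estimate.

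For that estimate, I would test~\eqref{prob: Eul-Lag-one} against $w_\gamma^{2\beta-1}$ with $\beta\ge 1$ (rigorously, against $w_\gamma(\min\{w_\gamma,k\})^{2\beta-2}$ and let $k\to\infty$ by monotone convergence). After dropping the nonnegative absorption term $\int|x|^{-\gamma}w_\gamma^{p+2\beta-1}\,dx$, one gets
\[
\frac{2\beta-1}{\beta^2}\,\nrm{\nabla w_\gamma^\beta}2^2\le\iRd{|x|^{-\gamma}\,w_\gamma^{2p+2\beta-2}}\,,
\]
and combining with Hardy--Sobolev~\eqref{eq: sobolev-hardy-H} applied to $w_\gamma^\beta$, raised to the power $2/2^\ast_\gamma$, yields the basic Moser step
\[
\nrm{w_\gamma}{2^\ast_\gamma\beta,\gamma}^{2\beta}\le\mathsf C_{\rm HS}^2\,\frac{\beta^2}{2\beta-1}\,\nrm{w_\gamma}{2p+2\beta-2,\gamma}^{2p+2\beta-2}\,.
\]
Setting $q_0:=2^\ast_\gamma$, $\beta_n:=(q_n-2p+2)/2$, and $q_{n+1}:=2^\ast_\gamma\beta_n$, the assumption $p<2^\ast_\gamma/2$ gives $\beta_n>1$, and the affine recursion $q_{n+1}=(2^\ast_\gamma/2)(q_n-2p+2)$ has fixed point $q^\ast:=2(p-1)2^\ast_\gamma/(2^\ast_\gamma-2)<2^\ast_\gamma$, so $q_n=q^\ast+(q_0-q^\ast)(2^\ast_\gamma/2)^n\nearrow\infty$ geometrically. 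A telescoping computation gives
\[
E_n:=\prod_{k=0}^{n-1}\frac{q_k}{2\beta_k}=\left(\frac{2^\ast_\gamma}{2}\right)^{n}\frac{q_0}{q_n}\;\xrightarrow[n\to\infty]{}\;\frac{q_0}{q_0-q^\ast}=\frac{2^\ast_\gamma-2}{2^\ast_\gamma-2p}=\zeta_\infty\,,
\]
while the accompanying product of $\beta_n$-dependent prefactors is bounded because $\log(\beta_n^2/(2\beta_n-1))/\beta_n$ is summable (since $\beta_n$ grows geometrically). Iterating the Moser step, bounding the initial norm by $\nrm{w_\gamma}{2^\ast_\gamma,\gamma}\le\mathsf C_{\rm HS}\,\nrm{\nabla w_\gamma}2$, and letting $n\to\infty$ (using $\nrm{w_\gamma}{q,\gamma}\to\nrm{w_\gamma}\infty$ as $q\to\infty$ since $|x|^{-\gamma}>0$ a.e.), delivers the required $\mathrm L^\infty$-bound with exponent $\zeta_\infty$.

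The main technical obstacle is to legitimise the non-smooth test function $w_\gamma^{2\beta-1}$ for non-integer $\beta$ and to guarantee inductively that each $\nrm{w_\gamma}{q_n,\gamma}$ is finite: this is done by the truncation above, the base case being supplied by the Hardy--Sobolev inclusion $\Hpgamma\hookrightarrow\mathrm L^{2^\ast_\gamma}_\gamma(\R^d)$, and by checking that the infinite product in $E_n$ converges to the sharp value $\zeta_\infty$ and not to something larger. The continuous dependence of $C$ on $(p,q,\gamma)$ and the finiteness of $\lim_{q\to\infty}C(q)=C_\infty$ then follow immediately from the explicit formulas for $q_n,\beta_n$ in $(p,\gamma)$, from the continuity of $\mathsf C_{\rm HS}$ in $\gamma$ noted after~\eqref{eq: CKN-originaria-interp}, and from the interpolation identity $C(q)=\mathsf C_{\rm S}^{2^\ast/q}\,C_\infty^{1-2^\ast/q}$ derived in the first paragraph.
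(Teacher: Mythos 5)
Your proposal is correct and follows essentially the same strategy as the paper's proof. Both are a Moser iteration based on testing~\eqref{prob: Eul-Lag-one} with a power of $w_\gamma$ and applying Hardy--Sobolev~\eqref{eq: sobolev-hardy-H} at each step: your test function $w_\gamma^{2\beta-1}$ with $\beta=1+\varepsilon/2$ is exactly the paper's $(w_\gamma\wedge A)^{1+\varepsilon}$ after passage to the limit, your recursion $q_{n+1}=\tfrac{2^\ast_\gamma}{2}(q_n-2p+2)$ coincides with the paper's $\varepsilon_{n+1}=2^\ast_\gamma(1+\varepsilon_n/2)-2p$ under $q_n=2p+\varepsilon_n$ with the same starting exponent $q_0=2^\ast_\gamma$, and your telescoped exponent $E_n=(2^\ast_\gamma/2)^n\,q_0/q_n\to\zeta_\infty$ matches the paper's $\zeta_n$. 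The only cosmetic difference is that you make the telescoping structure of the exponents and the summability of $\log A_k/(2\beta_k)$ explicit, where the paper simply defines the constants $C_n$ by a recursion and asserts convergence; the final reduction via $\nrm{w_\gamma}q\le\nrm{w_\gamma}{2^\ast}^{2^\ast/q}\nrm{w_\gamma}\infty^{1-2^\ast/q}$ together with Sobolev and $\nrm{w_\gamma}{2^\ast_\gamma,\gamma}\le\mathsf C_{\rm HS}\nrm{\nabla w_\gamma}2$ is identical in both.
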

\begin{proof} Let us set $\varepsilon_0:=2^\ast_\gamma-2\,p$. For any $A>0$, after multiplying~\eqref{prob: Eul-Lag-one} by the test function $(w_\gamma\wedge A)^{1+\varepsilon_0}$ and integrating by parts in $\R^d$, and then letting $A\to\infty$, we obtain the identity:
\[\label{eq: prima-stima-H1}
\frac{4\,(1+\varepsilon_0)}{(2+\varepsilon_0)^2}\iRd{\left|\nabla{w_\gamma^{1+\varepsilon_0/2}}\right|^2}+\iRd{w_\gamma^{p+1+\varepsilon_0}\,|x|^{-\gamma}}=\iRd{w_\gamma^{2p+\varepsilon_0}\,|x|^{-\gamma}}\,.
\]
By applying the Hardy-Sobolev inequality~\eqref{eq: sobolev-hardy-H} to the function $w=w_\gamma^{1+{\varepsilon_0}/2}$, we deduce that
\[
\nrm{w_\gamma}{2p+\varepsilon_1,\gamma}^{2+\varepsilon_0}\le\frac{(2+\varepsilon_0)^2}{4\,(1+\varepsilon_0)}\,\mathsf C_{\rm HS}^2\,\nrm{w_\gamma}{2p+\varepsilon_0,\gamma}^{2p+\varepsilon_0}
\]
with $2\,p+\varepsilon_1=2^\ast_\gamma\,(1+\varepsilon_0/2)$. Let us define the sequence $\{\varepsilon_n\}$ by the recursion relation $\varepsilon_{n+1}:=2^\ast_\gamma\,(1+\varepsilon_n/2)-2\,p$ for any $n\in\N$, that is,
\[\label{eq: stima-Lq-weighted-rec-solved}
\textstyle\varepsilon_n=\frac{2^\ast_\gamma-2\,p}{2^\ast_\gamma-2}\left[2^\ast_\gamma\big(\frac{2^\ast_\gamma}2\big)^n-2\right]\quad\forall\,n\in\N\,,
\]
and take $q_n=2\,p+\varepsilon_n$. If we repeat the above estimates with $\varepsilon_0$ replaced by $\varepsilon_n$ and $\varepsilon_1$ replaced by $\varepsilon_{n+1}$, we get
\begin{equation}\label{eq: est1}
\nrm{w_\gamma}{2p+\varepsilon_{n+1},\gamma}^{2+\varepsilon_n}\le\frac{(2+\varepsilon_n)^2}{4\,(1+\varepsilon_n)}\,\mathsf C_{\rm HS}^2\,\nrm{w_\gamma}{q_n,\gamma}^{q_n}\,.
\end{equation}
Hence, by iterating~\eqref{eq: est1}, we obtain the estimate
\[
\nrm{w_\gamma}{q_n,\gamma}\le C_n\,\nrm{w_\gamma}{2^\ast_\gamma,\gamma}^{\zeta_n}\quad\mbox{with}\quad{\textstyle\zeta_n=\big(\frac{2^\ast_\gamma}2\big)^n\,\frac{2^\ast_\gamma}{q_n}}
\]
where the sequence $\{C_n\}$ is defined by $C_0=\mathsf C_{\rm{HS}}$ and
\[
C_{n+1}^{2+\varepsilon_n}=\frac{(2+\varepsilon_n)^2}{4\,(1+\varepsilon_n)}\,\mathsf C_{\rm HS}^2\,C_n^{q_n}\quad\forall\,n\in\N\,.
\]
The sequence $\{C_n\}$ converges to a limit $C_\infty$. Letting $n\to\infty$ we get the uniform bound
\[\label{eq: stima-infty-indip}
\nrm{w_\gamma}\infty\le C_\infty\,\nrm{w_\gamma}{2^\ast_\gamma,\gamma}^{\zeta_\infty}
\]
where $\zeta_\infty=\frac{2^\ast_\gamma-2}{2^\ast_\gamma-2\,p}=\lim_{n\to\infty}\zeta_n$. The proof is completed using the H\"older interpolation inequality $\nrm{w_\gamma}q\le\nrm{w_\gamma}{2^\ast}^{2^\ast\!/q}\,\nrm{w_\gamma}\infty^{1-2^\ast\!/q}$,~\eqref{eq: sobolev-hardy-H} and~\eqref{eq: sobolev-H}.
\qed
\end{proof}

\begin{lem}\label{lem-rego-calpha} Let $d\ge3$, $p\in(1,2^\ast\!/2)$, $q\in[1,\infty)$ and $\gamma\in\big[0,d-(d-2)\,p\big)$ such that $2\,p\,q\,\gamma<d$. Any solution $w_\gamma$ to~\eqref{prob: Eul-Lag-one} is bounded in $\mathrm W^{2,q}_{\rm loc}(\R^d)$, and locally relatively compact in $C^{1,\alpha}(\R^d)$ with $\alpha=1-d/q$, for any $q>d$. Moreover, there exists a positive constant $C_{\gamma,q,p,d}$ such that
\begin{multline}\label{eq: stima-rego-calpha-bis}
\left\|w_\gamma\right\|_{C^{1,\alpha}\(\overline{B}_1(x_0)\)}\le C_{\gamma,q,p,d}\(\|w_\gamma\|_{\mathrm L^q(B_2(x_0))}\hspace*{-3pt}+\|w_\gamma\|_{\mathrm L^{(2p-1)q}(B_2(x_0))}^{2p-1}\hspace*{-3pt} \right.\\+\left. \|w_\gamma\|_{\mathrm L^{p\kern0.5pt q}(B_2(x_0))}^p+\,\|w_\gamma\|_{\mathrm L^{2p\kern0.5pt q}(B_2(x_0))}^{2p-1}+\|w_\gamma\|_{\mathrm L^{2p\kern0.5pt q}(B_2(x_0))}^p\)
\end{multline}
for any $x_0\in\R^d$, and $\limsup_{\gamma \to 0_+} C_{\gamma,q,p,d}<\infty$.\end{lem}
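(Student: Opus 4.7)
The plan is to recast the Euler--Lagrange equation \eqref{prob: Eul-Lag-one} as a Poisson problem $-\Delta w_\gamma = f_\gamma$ with source $f_\gamma := |x|^{-\gamma}(w_\gamma^{2p-1} - w_\gamma^p)$, and then invoke the standard interior $L^q$ Calder\'on--Zygmund estimate
\[
\|w_\gamma\|_{W^{2,q}(B_1(x_0))}\le C_0\bigl(\|w_\gamma\|_{L^q(B_2(x_0))} + \|f_\gamma\|_{L^q(B_2(x_0))}\bigr),
\]
whose constant $C_0=C_0(d,q)$ depends only on $d$ and $q$ (see, e.g., \cite{GT}). Once this $W^{2,q}_{\rm loc}$ bound is available with $q>d$, Morrey's embedding $W^{2,q}\hookrightarrow C^{1,\alpha}$ with $\alpha=1-d/q$ upgrades it to an estimate of the form \eqref{eq: stima-rego-calpha-bis}, and Arzel\`a--Ascoli delivers the local relative compactness in $C^{1,\alpha}$ for any family of solutions satisfying such a uniform bound. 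The real task is therefore to control $\|f_\gamma\|_{L^q(B_2(x_0))}$ by local $L^r$-norms of $w_\gamma$, uniformly both in $x_0\in\R^d$ and as $\gamma\downarrow 0$.

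To that end, I would split the analysis into two regimes, depending on whether $B_2(x_0)$ touches a neighbourhood of the origin. If $|x_0|\ge 4$, then $|x|\ge 2$ on $B_2(x_0)$, hence $|x|^{-\gamma}\le 2^{-\gamma}$ is uniformly bounded, and the pointwise inequality $|f_\gamma|\le w_\gamma^p+w_\gamma^{2p-1}$ immediately yields
\[
\|f_\gamma\|_{L^q(B_2(x_0))}\le 2^{-\gamma}\bigl(\|w_\gamma\|_{L^{pq}(B_2(x_0))}^p + \|w_\gamma\|_{L^{(2p-1)q}(B_2(x_0))}^{2p-1}\bigr),
\]
which produces the $L^{pq}$ and $L^{(2p-1)q}$ contributions in \eqref{eq: stima-rego-calpha-bis}. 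If instead $|x_0|<4$, then $B_2(x_0)\subset B_6(0)$ and the singular weight must be absorbed by H\"older's inequality. For the term $|x|^{-\gamma q}w_\gamma^{pq}$ I would use the conjugate pair $(2,2)$, which only requires $2q\gamma<d$ (a consequence of $2pq\gamma<d$ together with $p>1$) and, after taking the $q$-th root, produces $\|w_\gamma\|_{L^{2pq}(B_2(x_0))}^p$. For $|x|^{-\gamma q}w_\gamma^{(2p-1)q}$ I would instead use the pair $\bigl(2p,\tfrac{2p}{2p-1}\bigr)$, which raises the weight to exactly $2pq\gamma<d$---precisely the sharp hypothesis of the lemma---and the function to $2pq$, giving $\|w_\gamma\|_{L^{2pq}(B_2(x_0))}^{2p-1}$. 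This accounts for the remaining two terms of \eqref{eq: stima-rego-calpha-bis}.

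Combining the two cases and plugging the resulting bound for $\|f_\gamma\|_{L^q(B_2(x_0))}$ into the Calder\'on--Zygmund estimate yields \eqref{eq: stima-rego-calpha-bis}, with an overall constant $C_{\gamma,q,p,d}$ built from $C_0(d,q)$ and from weighted integrals of the form $\bigl(\int_{B_6}|x|^{-2q\gamma}\,dx\bigr)^{1/(2q)}$ and $\bigl(\int_{B_6}|x|^{-2pq\gamma}\,dx\bigr)^{1/(2pq)}$. Both of these converge as $\gamma\downarrow 0$ to $|B_6|^{1/(2q)}$ and $|B_6|^{1/(2pq)}$ by dominated convergence (a dominating bound for $\gamma$ in any fixed interval $[0,\gamma_0]$ with $\gamma_0<d/(2pq)$ is provided by $|x|^{-2pq\gamma_0}$, which is integrable on $B_6(0)$), so $\limsup_{\gamma\to 0_+}C_{\gamma,q,p,d}<\infty$. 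The main technical subtlety I foresee is the uniformity in $x_0$, which is handled by the observation that the possibly singular case always sits inside the fixed ball $B_6(0)$, so the weighted integrals above can be computed once and for all on this common set. No further real obstacle is expected: the choice of H\"older exponents is essentially forced by the form of \eqref{eq: stima-rego-calpha-bis}, and the hypothesis $2pq\gamma<d$ is exactly what makes the sharper of these exponents admissible.
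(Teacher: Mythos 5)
Your proposal matches the paper's proof in essence: recast~\eqref{prob: Eul-Lag-one} as a Poisson problem, control the source $f_\gamma = |x|^{-\gamma}(w_\gamma^{2p-1}-w_\gamma^p)$ in $L^q$ by separating the region where the weight is bounded from the region near the origin (where H\"older with the exponent pairs $(2,2)$ and $(2p,\tfrac{2p}{2p-1})$ absorbs the singularity under the hypothesis $2pq\gamma<d$), and then invoke the Calder\'on--Zygmund estimate and Morrey's embedding. The paper organises the split slightly differently---it decomposes the integration domain $\Omega$ into $\Omega\setminus B_1$ and $\Omega\cap B_1$ for every $x_0$ simultaneously rather than branching on the location of $x_0$, which is why all four terms appear at once in~\eqref{eq: stima-rego-calpha-bis}---but the underlying estimates are identical.

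One point you gloss over: the Calder\'on--Zygmund estimate as stated in~\cite[Theorem 9.11]{GT} presupposes $w\in\mathrm W^{2,q}_{\rm loc}$, which is not known a priori for $w_\gamma$, only that it is a weak $\dot{\mathrm H}^1$ solution. The paper handles this by mollifying: one applies the Calder\'on--Zygmund bound to $w_{\gamma,\varepsilon}:=w_\gamma\ast\rho_\varepsilon$ and then passes to the limit $\varepsilon\to0$, using the $L^q$ control of $\Delta w_\gamma$ and Lemma~\ref{stime-Lq-alte} to justify the convergence of $w_{\gamma,\varepsilon}\to w_\gamma$ and $\Delta w_{\gamma,\varepsilon}\to\Delta w_\gamma$ in $\mathrm L^q_{\rm loc}$. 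Your argument would need this regularisation step to be airtight; otherwise it is correct.
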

\begin{proof} For any domain $\Omega\in\R^d$, we obtain from the Euler-Lagrange equation~\eqref{prob: Eul-Lag-one} that 
\begin{eqnarray*}
&&2^{1-q}\left\|\Delta w_\gamma\right\|_{\mathrm L^q(\Omega)}^q\\
&&\le\int_\Omega w_\gamma^{(2p-1)\kern0.5pt q}\,dx+\int_\Omega w_\gamma^{p\kern0.5pt q}\,dx+\(\int_{B_1}|x|^{-2\gamma p\kern0.5pt q}\,dx\)^{\frac1{2p}}\(\int_\Omega w_\gamma^{2p\kern0.5pt q}\,dx\)^{\frac{2p-1}{2\,p}}\\
&&\qquad+\(\int_{B_1}|x|^{-2\gamma q}\,dx\)^{\frac12}\(\int_\Omega w_\gamma^{2p\kern0.5pt q}\,dx\)^{\frac12}\\
&&\le2^{1-q}\,C^q\(\left\|w_\gamma\right\|_{\mathrm L^{(2p-1)\kern0.5pt q}(\Omega)}^{(2p-1)\kern0.5pt q}+\left\|w_\gamma\right\|_{\mathrm L^{p\kern0.5pt q}(\Omega)}^{p\kern0.5pt q}+\left\|w_\gamma\right\|_{\mathrm L^{2p\kern0.5pt q}(\Omega)}^{(2p-1)\kern0.5pt q}+\left\|w_\gamma\right\|_{\mathrm L^{2p\kern0.5pt q}(\Omega)}^{p\kern0.5pt q}\)
\end{eqnarray*}
with $2^{1-q}\,C^q=\max\left\{1,\(\int_{B_1}|x|^{-2\gamma p\kern0.5pt q}\,dx\)^{\frac1{2p}},\(\int_{B_1}|x|^{-2\gamma\kern0.5pt q}\,dx\)^{\frac12}\right\}$. As a consequence, for any domain $\Omega\subset\R^d$ and any solution $w_\gamma$ to~\eqref{prob: Eul-Lag-one}, we have
\begin{multline}\label{eq: estimate-compact-laplaciano}
\left\|\Delta w_\gamma\right\|_{\mathrm L^q(\Omega)}\\ \le C\(\left\|w_\gamma\right\|_{\mathrm L^{(2p-1)\kern0.5pt q}(\Omega)}^{2p-1}\hspace*{-3pt}+\left\|w_\gamma\right\|_{\mathrm L^{p\kern0.5pt q}(\Omega)}^p\hspace*{-3pt}+\left\|w_\gamma\right\|_{\mathrm L^{2p\kern0.5pt q}(\Omega)}^{2p-1}\hspace*{-3pt}+\left\|w_\gamma\right\|_{\mathrm L^{2p\kern0.5pt q}(\Omega)}^p\).
\end{multline}

By the Calder\'on-Zygmund theory, we know that there exists a positive constant $C^\prime=C^\prime(q,d)$ such that, for any function $w\in\mathrm W^{2,q}_{\rm loc}(B_2(x_0))\cap\mathrm L^q(B_2(x_0))$ with $\Delta w\in\mathrm L^q(B_2(x_0))$, the inequality
\[\label{eq: cald-zyg-1}
\left\|w\right\|_{\mathrm W^{2,q}(B_1(x_0))}\le C^\prime \(\left\|w\right\|_{\mathrm L^q(B_2(x_0))}+\left\|\Delta w\right\|_{\mathrm L^q(B_2(x_0))}\)
\]
holds. See for instance~\cite[Theorem 9.11]{GT}. \emph{A priori} we do not know whether $w_\gamma\in\mathrm W^{2,q}_{\rm loc}(B_2(x_0))$, but we can consider $w_{\gamma,\varepsilon}:=w_\gamma\ast\rho_\varepsilon$, where $\{\rho_\varepsilon\}$ is a family of mollifiers depending on $\varepsilon>0$, apply the Calder\'on-Zygmund estimate to $w_{\gamma,\varepsilon}$ and then pass to the limit as $\varepsilon\to 0$ with $w_{\gamma,\varepsilon}\to w_\gamma$ and $\Delta w_{\gamma,\varepsilon}\to\Delta w_\gamma$ in $\mathrm L^q(B_2(x_0))$, because of Lemma~\ref{stime-Lq-alte} and of the above estimate on $\left\|\Delta w_\gamma\right\|_{\mathrm L^q(\Omega)}$.

Estimate~\eqref{eq: stima-rego-calpha-bis} and the local relative compactness are then consequences of the standard Sobolev embeddings: see, \emph{e.g.},~\cite[Theorem 7.26]{GT}.
\qed
\end{proof}

\begin{oss}\label{rem: regularity}
Thanks to the uniform bound provided by Lemma~\ref{stime-Lq-alte}, the Euler-Lagrange equation~\eqref{prob: Eul-Lag-one} implies that $|x|^\gamma\Delta w_\gamma\in\mathrm L^\infty(\R^d)$, for all $\gamma$ and $p$ complying with~\eqref{eq: parameters}. Hence, if $\gamma\in(0,1)$ then $w_\gamma\in C^{1,\alpha}$ for all $\alpha<1-\gamma$, while $w_\gamma\in C^{0,\alpha}$ for all $\alpha<2-\gamma$ if $\gamma\in[1,2)$.

The optimal regularity for solutions to~\eqref{prob: Eul-Lag-one} can be estimated by the regularity of the function $w_\gamma^\star$ defined in~\eqref{eq: barenblatt-weight-intro}. This solution is precisely of class $C^{1,1-\gamma}$ for $\gamma\in(0,1)$ and of class $C^{0,2-\gamma}$ for $\gamma\in[1,2)$.
\end{oss}

\section{Concentration-compactness analysis and consequences}\label{sec: conc-comp}

In this section we shall make use of a suitable variant of the concentration-compactness principle as stated in~\cite{MR778970,MR778974}. We consider the minimization problem $\mathsf J_\gamma$ as defined in Section~\ref{Sec: Scalings and EL} in the limit $\gamma\downarrow 0$. Our goal is to prove that the solutions $w_\gamma$ to problem~\eqref{prob: min} approximate, \emph{up to translations}, the function
\be{eq: expl-minim}
w_0(x):=\(\frac{a_0}{b_0+|x|^2}\)^{\frac1{p-1}}\quad\forall\,x\in\R^d
\ee
defined in~\eqref{eq: barenblatt-weight-intro}. Notice indeed that in the limit case $\gamma=0$ the problem is invariant under translations, which is the major source of difficulties in this section. We will put the emphasis on the differences with the standard results of the concentration-compactness method and refer to~\cite[Section 3.3.1]{MuratoriThesis2015} for fully detailed proofs. Our goal is to establish \emph{a priori} estimates on translations and get a uniform upper bound on the optimal functions as $\gamma\downarrow 0$.
\begin{pro}\label{prop: conc-comp} Let $p\in(1,2^\ast\!/2)$. Then we have
\be{eq: conc-comp-4}
\lim_{\gamma\to 0}\mathsf J_\gamma=\mathsf J_0\,.
\ee
Let $\{\gamma_n\}\subset\big(0,d-(d-2)\,p\big)$ be a decreasing sequence such that $\lim_{n\to\infty}\gamma_n=0$. For any $n\in\N$, we consider solutions $w_{\gamma_n}$ to problem~\eqref{prob: min} satisfying~\eqref{prob: Eul-Lag-one} with $\gamma=\gamma_n$. Then
\[\label{eq: conc-comp-1}
\lim_{n\to\infty}\iRd{w_{\gamma_n}^{2p}\,|x|^{-\gamma_n}}=\iRd{w_0^{2p}}=M
\]
and up to the extraction of a subsequence, there exists $\{y_n\}\subset\R^d$ such that
\[
v_n:=w_{\gamma_n}(\cdot+y_n)\rightarrow w_0\quad\mbox{strongly in}\;\dot{\mathrm H}^1(\R^d)\,,
\]
where either $\{y_n\}$ is bounded or $|y_n|\to\infty$ and $\ell:=\lim_{n\to\infty}|y_n|^{\gamma_n}=1$. \end{pro}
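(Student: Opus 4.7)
The strategy is to combine a variational upper bound on $\mathsf J_\gamma$ with a concentration-compactness analysis adapted to the translation invariance of the $\gamma=0$ limit problem. First I would establish $\limsup_{\gamma\to 0}\mathsf J_\gamma\le\mathsf J_0$ by plugging $w_0$ into the quotient $\mathcal Q_\gamma$, using dominated convergence to pass $\|w_0\|_{q,\gamma}\to\|w_0\|_q$ for $q=p+1$ and $q=2p$ while $\|\nabla w_0\|_2$ stays independent of $\gamma$. This yields uniform boundedness of $\mathsf J_{\gamma_n}$ and, via the mass constraint and Lemma~\ref{stime-Lq-alte}, uniform bounds on $\|\nabla w_{\gamma_n}\|_2$, $\|w_{\gamma_n}\|_{p+1,\gamma_n}$ and $\|w_{\gamma_n}\|_\infty$.

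Next I would apply Lions' concentration-compactness principle to the probability measures $\mu_n:=M^{-1}\,w_{\gamma_n}^{2p}\,|x|^{-\gamma_n}\,dx$. \emph{Vanishing} is excluded because combining the Hardy-Sobolev inequality~\eqref{eq: sobolev-hardy-H} with the $\dot{\mathrm H}^1$ bound would force $\|w_{\gamma_n}\|_{2p,\gamma_n}\to 0$, contradicting $M>0$. \emph{Dichotomy} is excluded by splitting $w_{\gamma_n}$ into two pieces of essentially disjoint support whose weighted masses tend to $M_1,M_2>0$ with $M_1+M_2=M$: the separation yields
\[
\RF_{\gamma_n}[w_{\gamma_n}]\ge\RF_{\gamma_n}[w_{\gamma_n,1}]+\RF_{\gamma_n}[w_{\gamma_n,2}]+o(1)\ge\mathsf J_{\gamma_n}\bigl[M_1^{\theta_{\gamma_n}}+M_2^{\theta_{\gamma_n}}\bigr]+o(1)\,,
\]
and passing to the limit contradicts the strict subadditivity $(M_1+M_2)^{\theta_0}<M_1^{\theta_0}+M_2^{\theta_0}$ valid because $\theta_0\in(0,1)$. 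The remaining compactness alternative supplies translations $y_n\in\R^d$ and tightness of $v_n:=w_{\gamma_n}(\cdot+y_n)$; since $\{v_n\}$ is bounded in $\dot{\mathrm H}^1\cap\mathrm L^\infty$, a subsequence satisfies $v_n\rightharpoonup v$ weakly in $\dot{\mathrm H}^1$ and $v_n\to v$ in $\mathrm L^q_{\rm loc}$ for any $q<2^\ast$.

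To identify $v$, I would pass to the limit in the translated Euler-Lagrange equation
\[
-\Delta v_n+|x+y_n|^{-\gamma_n}(v_n^p-v_n^{2p-1})=0\,.
\]
If $\{y_n\}$ is bounded then, up to subsequence, $y_n\to y_0$ and $|x+y_n|^{-\gamma_n}\to 1$ uniformly on compacts disjoint from $-y_0$, so $v$ solves $-\Delta v+v^p-v^{2p-1}=0$; uniqueness of the radial ground state (Lemma~\ref{Lem: final-teo-barenblatt}) up to translations gives $v=w_0(\cdot-z)$, and a bounded readjustment of $y_n$ yields $v=w_0$. If $|y_n|\to\infty$, I would extract a further subsequence so that $|y_n|^{\gamma_n}\to\ell\in[1,\infty]$ (with $\ell\ge 1$ since eventually $|y_n|\ge 1$). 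The case $\ell=\infty$ is discarded using $|x+y_n|^{-\gamma_n}\le(|y_n|-R)^{-\gamma_n}\to 0$ on $B_R$, which combined with the uniform $\mathrm L^\infty$ bound on $v_n$ contradicts the tightness lower bound $\int_{B_R}v_n^{2p}\,|x+y_n|^{-\gamma_n}\,dx\ge M-\varepsilon$. For $\ell\in[1,\infty)$ the weight converges to $\ell^{-1}$ on compacts, and the limit equation is $-\Delta v+\ell^{-1}(v^p-v^{2p-1})=0$ with $\int v^{2p}=\ell M$; uniqueness forces $v(x)=w_0(x/\sqrt\ell)$ of mass $\ell^{d/2}M$, whence $\ell^{d/2-1}=1$ and $\ell=1$ since $d\ge 3$, yielding again $v=w_0$.

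Strong $\dot{\mathrm H}^1$ convergence then follows from matching upper and lower bounds on $\RF_{\gamma_n}[w_{\gamma_n}]$, which also yield~\eqref{eq: conc-comp-4}: the lower bound $\liminf\RF_{\gamma_n}[w_{\gamma_n}]\ge\RF_0[w_0]=\mathsf J_0 M^{\theta_0}$ combined with the first-step upper bound forces separate convergence of the gradient and potential terms, upgrading weak to strong convergence. The main obstacle throughout is the mismatch between the translation invariance of the limit problem and the origin-centered weight at $\gamma>0$: dichotomy must be ruled out even though the weight treats different centers asymmetrically, and the scaling identity $\ell^{d/2-1}=1$ (which essentially uses $d\ge 3$) is the key ingredient excluding a mismatch between the escape rate $|y_n|\to\infty$ and the vanishing rate of $\gamma_n$.
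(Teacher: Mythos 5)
Your overall architecture matches the paper's: bound $\limsup\mathsf J_\gamma\le\mathsf J_0$ by testing with $w_0$, run concentration--compactness on the weighted densities, rule out vanishing and dichotomy via strict subadditivity of $M\mapsto M^{\theta_0}$, and then analyze the compactness alternative together with the escape rate $\ell=\lim|y_n|^{\gamma_n}$. Your dichotomy and $\ell=\infty$ arguments are in line with the paper, and the mass balance $\int v^{2p}=\ell M$ you extract from tightness is exactly what the paper derives. Two remarks, one minor and one substantive.

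The minor one: your exclusion of vanishing (``Hardy--Sobolev plus the $\dot{\mathrm H}^1$ bound forces $\|w_{\gamma_n}\|_{2p,\gamma_n}\to 0$'') is too compressed. The density $f_n=w_{\gamma_n}|x|^{-\gamma_n/(2p)}$ is not in $\dot{\mathrm H}^1(\R^d)$, so Lions' vanishing lemma cannot be applied to it directly. The paper first rules out concentration at the origin, then cuts off a small ball $B_r$ so that the remaining piece $g_n=(1-\xi_r^2)^{1/(2p)}f_n$ is $\dot{\mathrm H}^1$-bounded, applies Lions' lemma to $g_n$, and finishes with a three-exponent H\"older interpolation with $\gamma_n$-dependent constants. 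Your sketch omits this split, which is where the weight actually bites.

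The substantive one: your route to $\ell=1$ (and to the identification of the limit $v$) rests on a claim you have not established. You pass to the limit in the Euler--Lagrange equation to get $-\Delta v+\ell^{-1}(v^p-v^{2p-1})=0$ and then invoke ``uniqueness'' to write $v=w_0(\cdot/\sqrt\ell)$; from mass balance you deduce $\ell^{d/2-1}=1$. But the uniqueness result at your disposal (Lemma~\ref{Lem: final-teo-barenblatt}, i.e., Pucci--Serrin) is only uniqueness \emph{among radial} solutions. To conclude that the weak limit $v$, which is not a priori radial, coincides with the unique radial profile up to translation, you would need either a Gidas--Ni--Nirenberg type symmetry theorem for the unweighted limit equation (not invoked, and not entirely routine here since $f'(0)=0$ and $w_0$ has slow polynomial decay) or the characterization of minimizers from~\cite{DD}. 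The latter is unavailable at this stage because you cannot yet assert that $v$ minimizes the $\gamma=0$ problem: establishing that requires knowing $\ell=1$, which is precisely what you are trying to prove, so the argument becomes circular. The paper sidesteps this entirely by a purely variational scaling argument: from weak lower semicontinuity and Fatou, $\tfrac12\!\int|\nabla v|^2+\tfrac1{(p+1)\ell}\!\int v^{p+1}\le\mathsf J_0 M^{\theta_0}$; rescaling $v$ to mass $M$ multiplies the potential term by $\ell^{(p-1)(d-2)/(2[d-p(d-2)])}>1$ when $\ell>1$, which then contradicts the definition of $\mathsf J_0$. Only after $\ell=1$ is in hand does the paper conclude that $v$ is a minimizer and invoke~\cite{DD}. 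You should replace your PDE-uniqueness step by this scaling argument, or else supply the symmetry result for the unweighted limit equation.
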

\begin{proof} According to~\eqref{eq: CKN-originaria-interp} and~\eqref{I-C}, $\mathsf J_\gamma$ is bounded away from $0$ as $\gamma\downarrow 0$. Using $w_0$ as a test function yields
\be{eq: major-I-gamma-2}
\limsup_{\gamma\to0}\mathsf J_\gamma\le\mathsf J_0\,.
\ee
Hence, up to the extraction of a subsequence, $\{\mathsf J_{\gamma_n}\}$ converges to a finite positive limit that we shall denote by $\mathsf J$. According to~\eqref{M-I},
\be{eq-1}
M_n:=\(2\,p\,\theta_{\gamma_n}\,\mathsf J_{\gamma_n}\)^{1/(1-\theta_{\gamma_n})}=\nrm{w_{\gamma_n}}{2p,\gamma_n}^{2p}
\ee
converges as $n\to\infty$ to $M:=\(2\,p\,\theta_0\,\mathsf J\)^{1/(1-\theta_0)}$. Let us define
\[
f_n(x):=w_{\gamma_n}(x)\,|x|^{-\frac{\gamma_n}{2\,p}}
\]
and consider the sequence $\{f_n^{2p}\}$. To prove~\eqref{eq: conc-comp-4}, we have indeed to establish the strong convergence of the sequence in $\mathrm L^1(\R^d)$. Our proof relies on the concentration-compactness method.

A simple estimate based on H\"older's inequality rules out the \emph{concentration} scenario. Consider indeed a function $w\in\dot{\mathrm H}^1(\R^d)$. We get that
\[
\int_{B_r(x_0)}|w|^{2p}\,|x|^{-\gamma}\,dx\le\(\int_{B_r(x_0)}|x|^{-\gamma\kern0.5pt q}\,dx\)^{1/q}\nrm w{2^\ast}^{2p}
\]
with $q=d/\big(d-p\,(d-2)\big)$. By standard symmetrization techniques, the r.h.s.~(with $w=w_{\gamma_n}$) is maximal when $x_0=0$ and therefore bounded by $O(r^{d/q-\gamma})$ using Sobolev's inequality~\eqref{eq: sobolev-H},~\eqref{eq: major-I-gamma-2} and~\eqref{eq-1}, uniformly as $\gamma\downarrow0$. Hence we know that
\be{NoConcentration}
\lim_{r\to 0}\sup_{\begin{array}{c}x_0\in\R^d\\n\in\N\end{array}}\;\int_{B_r(x_0)}f_n^{2p}\,dx=0\,.
\ee

In the remainder of this section, we will need a cut-off function $\xi$ with the following properties: $\xi$ is a smooth function which is supported in~$B_2$, such that $0\le\xi\le1$ and satisfies $\xi\equiv1$ in $B_1$. We shall also use the scaled cut-off function defined by $\xi_r(x)=\xi(x/r)$ for any $x\in\R^d$, $r>0$.

Based on~\cite[Lemma I.1]{MR778970}, only three scenarii remain possible: \emph{vanishing}, \emph{dichotomy}, or \emph{compactness}. Let us consider each of these three cases.

\medskip\noindent\textbf{Vanishing.} \emph{There exists $R>0$ such that}
\[\label{eq: rule-out-van-1}
\lim_{n\to\infty}\sup_{y\in\R^d}\int_{B_R(y)}f_n^{2p}\,dx=0\,.
\]

By definition of $f_n$, we know that $\nrm{f_n}{2p}^{2p}=M_n$ converges to \hbox{$M>0$}. We deduce from~\eqref{NoConcentration} that there exists some $r>0$ such that
\[
\|\xi_r^{1/p}f_n\|_{2p}^{2p}<\frac12\,M
\]
for all $n$. Let $g_n:=(1-\xi_r^2)^\frac1{2p}\,f_n$ and observe that
\[
\nrm{g_n}{2p}^{2p}>\frac12\,M
\]
for $n$ large enough. On other hand one can prove, by means of~\eqref{eq: sobolev-H},~\eqref{eq: hardy-H},~\eqref{eq: major-I-gamma-2} and~\eqref{eq-1}, that $\{\nabla g_n \}$ is bounded in $\mathrm L^2(\R^d)$. In particular,
\[
\lim_{n\to\infty}\sup_{y\in\R^d}\int_{B_R(y)}g_n^{2p}\,dx=0
\]
shows that $\{g_n\}$ converges to $0$ strongly in $\mathrm L^q(\R^d)$ for any $q\in(2\,p,2^\ast)$, according to~\cite[Lemma~I.1]{MR778974}. By means of a three-point H\"older interpolation we can deduce the existence of positive constants $\alpha$, $\beta$, $\sigma$ (with $\alpha+\beta<1$) depending only on $q$, $p$, $d$ such that
\[
\nrm{g_n}{2p}\le r^{-\sigma\,\gamma_n}\,\nrm{w_{\gamma_n}}{p+1,\gamma_n}^\alpha\,\nrm{g_n}q^\beta\,\nrm{w_{\gamma_n}}{2^\ast}^{1-\alpha-\beta}\,.
\]
Letting $n \to \infty$ we get that $\lim_{n\to\infty}\nrm{g_n}{2p}=0$, a contradiction. Vanishing is therefore ruled out.

\medskip\noindent\textbf{Dichotomy.} \emph{There exists $\lambda\in(0,M)$ such that, for every $\epsilon>0$, one can choose $R_0>0$, a monotone sequence $\{R_n\}_{n\ge1}$ with $R_1>4\,R_0$ and $\lim_{n\to\infty}R_n=\infty$, and a sequence $\{y_n\}\subset\R^d$ such that
\be{eq: rule-out-dich-1}
\int_{B_{R_0}(y_n)}f_n^{2p}\,dx\ge\lambda-\epsilon\quad\mbox{and}\quad\int_{B_{R_n}(y_n)}f_n^{2p}\,dx\le\lambda+\epsilon
\ee
for all $n$ large enough.}

We proceed similarly to \cite[Theorem I.2]{MR778974}. Let
\[
\tilde f_n:=\big[\xi_{R_n/4}(\cdot-y_n)\big]^\frac1p\,f_n\,,\quad\tilde g_n:=\big[1-\xi_{R_n/2}^2(\cdot-y_n)\big]^\frac1{2p}\,f_n\,.
\]
By assumption, we know that
\[
\nrm{\tilde f_n}{2p}^{2p}\ge\lambda-\epsilon \quad \textrm{and} \quad \nrm{\tilde g_n}{2p}^{2p}\ge M-\lambda-\epsilon
\]
for all $n$ large enough. By exploiting the left-hand inequality in~\eqref{eq: rule-out-dich-1} one can show that $\{|y_n|^{\gamma_n}\}$ is bounded. Taking advantage of this property and of the fact that $\tilde f_n$ and $\tilde g_n$ have disjoint supports, we can deduce that
\[
\mathsf J_{\gamma_n}\,M^{\theta_{\gamma_n}}=\mathcal G_{\gamma_n}[w_{\gamma_n}]\ge\mathcal G_{\gamma_n}\big[\tilde f_n\,|x|^\frac{\gamma_n}{2\,p}\big]+\mathcal G_{\gamma_n}\big[\tilde g_n\,|x|^\frac{\gamma_n}{2\,p}\big]+O(\epsilon)
\]
as $n\to\infty$, where~$\mathcal G_\gamma$ is the functional of Section \ref{Sec: Scalings and EL} (for detailed computations, see \cite[Proof of Lemma 3.3.1]{MuratoriThesis2015}). Using $\tilde f_n\,|x|^\frac{\gamma_n}{2\,p}$ and $\tilde g_n\,|x|^\frac{\gamma_n}{2\,p}$ as test functions for $\mathsf J_{\gamma_n}$, passing to the limit as $n\to\infty$ and then taking the limit as $\epsilon\to0$, we get that
\[
\mathsf J\,M^{\theta_0}\ge\mathsf J\,\lambda^{\theta_0}+\mathsf J\,(M-\lambda)^{\theta_0}
\]
with $\theta_0:=\frac{d+2-p\,(d-2)}{d-p\,(d-4)}$. Since we know that $\mathsf J$ is positive, this contradicts the assumption that $\lambda\in(0,M)$, so that dichotomy is ruled out as well.

\medskip\noindent\textbf{Compactness.} \emph{The sequence $\{f_n\}$ is relatively compact in $\mathrm L^{2p}(\R^d)$, up to translations.}

Since the vanishing and dichotomy scenarii have been ruled out, under our assumptions there necessarily exists a sequence $\{y_n\}\subset\R^d$ and a function $f\in\mathrm L^{2p}(\R^d)$ such that, up to the extraction of a subsequence,
\[
\lim_{n\to\infty}\nrm{f_n(\cdot+y_n)-f}{2p}=0\,.
\]
We face two cases:\begin{description}
\item[$\bullet$] The sequence $\{y_n\}$ is bounded.
In that case, up to the extraction of a subsequence, $\{w_{\gamma_n}\}$ strongly converges to a limit $w$ in $\mathrm L^{2p}(\R^d)$.
\item[$\bullet$] The sequence $\{y_n\}$ is unbounded and we can assume without restriction that $\lim_{n\to\infty}|y_n|=\infty$.\end{description}
For later purpose, we take $\ell:=1$ in the first case. In the second case, we define
\[\label{eq: compact-definizione-K}
\ell:=\lim_{n\to\infty}|y_n|^{\gamma_n}\in[1,\infty]\,,
\]
up to the extraction of a subsequence. Let us prove that $\ell$ is finite, by contradiction. The compactness means that for any $\varepsilon\in(0,M)$, there is some $R>0$ such that
\begin{multline}\label{eq: e2}
M-\varepsilon\le\int_{B_R(y_n)}f_n^{2p}\,dx=\int_{B_R(0)}w_{\gamma_n}^{2p}(x+y_n)\,|x+y_n|^{-\gamma_n}\,dx\\
\sim|y_n|^{-\gamma_n}\int_{B_R(0)}w_{\gamma_n}^{2p}(x+y_n)\,dx\,.
\end{multline}
Recalling that $\dot{\mathrm H}^1(\R^d)$ is locally compactly embedded in $\mathrm L^{2p}_\gamma(\R^d)$ for any $\gamma\in[0,2)$ by Lemma \ref{lem: local-compactness}, we know that $\lim_{n\to\infty}|y_n|^{-\gamma_n}\int_{B_R(0)}w_{\gamma_n}^{2p}(x+y_n)\,dx=0$ if $\ell=\infty$, which is absurd. This proves that $\ell<\infty$.

Let $v_n:=w_{\gamma_n}(\cdot+y_n)$, and denote by $v$ the weak limit in $\dot{\mathrm H}^1(\R^d)$ of $\{v_n \}$. From~\eqref{eq: e2} we infer that
\[
\frac1{\ell}\int_{\R^d}v^{2p}\,dx=M\,.
\]
By means of weak lower semi-continuity, Fatou's Lemma and~\eqref{eq: major-I-gamma-2}, we obtain
\[
\frac12\int_{\R^d}\left|\nabla v\right|^2\,dx+\frac1{(p+1)\,\ell}\int_{\R^d}v^{p+1}\,dx\le\mathsf J_0\,M^{\theta_0}\,.
\]
Performing the change of variable
\[
w(x):=\lambda^{\frac{d}{2p}}\,\ell^{-\frac1{2p}}\,v(\lambda x)\,,\quad\lambda:=\ell^\frac1{d-p(d-2)}\,,
\]
we deduce that $w$ satisfies $\|w\|_{2p}^{2p}=M$ and
\[
\frac12\int_{\R^d}\left|\nabla w\right|^2\,dx+\frac{\ell^{\frac{(p-1)(d-2)}{2[d-p(d-2)]}}}{p+1}\int_{\R^d}w^{p+1}\,dx\le\mathsf J_0\,M^{\theta_0}\,,
\]
which, if $\ell>1$, is clearly in contradiction with the definition of $\mathsf J_0$.
This proves at once that
\[
\mathsf J=\lim_{n\to\infty}\mathsf J_{\gamma_n}=\mathsf J_0
\]
and
\[
\ell=1\,,
\]
even in the case $\lim_{n\to\infty}|y_n|=\infty$.

Hence, $v$ is optimal for $\mathsf J_0$, so that according to \cite{DD} $v=w_0(\cdot+y)$ for some $y\in\R^d$ and $\{v_n\}$ converges strongly in $\dot{\mathrm H}^1(\R^d)$ to $w_0(\cdot+y)$. Up to the replacement of $y_n$ by $y_n-y$, we may assume with no restriction that $y=0$, which completes the proof.
\qed
\end{proof}

\begin{cor}\label{eq: coro-improved-conv} Under the notations and assumptions of Proposition~\ref{prop: conc-comp}, up to the extraction of subsequences, we have
\be{eq: conv-tutti-Lq}
\lim_{n\to\infty}\nrm{v_n-w_0}q=0\quad\forall\,q\in[2^\ast,\infty)
\ee
and
\be{eq: conv-C}
\lim_{n\to\infty}\nrm{v_n-w_0}{C^{1,\alpha}(\R^d)}=0\quad\forall\,\alpha\in(0,1)\,.
\ee
\end{cor}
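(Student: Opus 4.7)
The plan is to bootstrap the strong $\dot{\mathrm H}^1(\R^d)$-convergence $v_n\to w_0$ from Proposition~\ref{prop: conc-comp}, first to $\mathrm L^q$ and then to $C^{1,\alpha}$, by feeding it into the \emph{a priori} estimates of Section~\ref{sect: stime-apriori}.

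For~\eqref{eq: conv-tutti-Lq}, Sobolev's inequality~\eqref{eq: sobolev-H} first upgrades the $\dot{\mathrm H}^1$-convergence to $v_n\to w_0$ in $\mathrm L^{2^\ast}(\R^d)$. Since $\{\nrm{\nabla v_n}2\}$ is bounded and the constant in Lemma~\ref{stime-Lq-alte} depends continuously on $\gamma$ and has a finite limit as $q\to\infty$, the case $q=\infty$ gives $\sup_n\nrm{v_n}\infty<\infty$. Combined with $w_0\in\mathrm L^\infty(\R^d)$, H\"older's interpolation
\[
\nrm{v_n-w_0}q\le\nrm{v_n-w_0}{2^\ast}^{2^\ast\!/q}\,\nrm{v_n-w_0}\infty^{1-2^\ast\!/q}
\]
then yields~\eqref{eq: conv-tutti-Lq} for every $q\in[2^\ast,\infty)$.

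For~\eqref{eq: conv-C}, fix $\alpha\in(0,1)$ and choose $q$ large enough that $q>\max\{d/(1-\alpha),\,2^\ast\}$ and $w_0\in\mathrm L^{rq}(\R^d)$ for each $r\in\{1,p,2p-1,2p\}$ (possible since $w_0$ decays like $|x|^{-2/(p-1)}$ at infinity). As $\gamma_n\to 0$, the requirement $2pq\gamma_n<d$ holds for $n$ large and Lemma~\ref{lem-rego-calpha} applies with constants that stay bounded as $n\to\infty$. The uniform bounds on $\{v_n\}$ in the $\mathrm L^{rq}(\R^d)$ spaces (from Lemma~\ref{stime-Lq-alte}) make estimate~\eqref{eq: stima-rego-calpha-bis} uniform in $n$ and in $x_0\in\R^d$, so that $\sup_{n,\,x_0}\nrm{v_n}{C^{1,\alpha}(\overline B_1(x_0))}<\infty$. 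A standard Arzel\`a--Ascoli argument together with the uniqueness of the limit established in the first step then yields $v_n\to w_0$ in $C^{1,\alpha}_{\rm loc}(\R^d)$.

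The hardest point is upgrading this local convergence to \emph{global} $C^{1,\alpha}$-convergence, which requires uniform decay at infinity. Here the uniformity of~\eqref{eq: stima-rego-calpha-bis} in the center $x_0$ is crucial: at any $x_0$ with $|x_0|\ge R+2$, its right-hand side is controlled by $\sum_{r\in\{1,p,2p-1,2p\}}\nrm{v_n}{\mathrm L^{rq}(\R^d\setminus B_R)}$. Since $\nrm{w_0}{\mathrm L^{rq}(\R^d\setminus B_R)}\to 0$ as $R\to\infty$ by the polynomial decay of $w_0$, and $v_n\to w_0$ in each $\mathrm L^{rq}$ by~\eqref{eq: conv-tutti-Lq}, these tails go to zero uniformly in $n$ as $R\to\infty$. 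Splitting $\nrm{v_n-w_0}{C^{1,\alpha}(\R^d)}$ into a local part (on $\overline B_{R+2}$, handled by the $C^{1,\alpha}_{\rm loc}$-convergence) and a tail part (handled by the uniform tail bound applied to both $v_n$ and $w_0$) then concludes~\eqref{eq: conv-C}.
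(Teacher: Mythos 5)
Your proposal is correct and follows essentially the same route as the paper: Sobolev embedding plus the uniform $\mathrm L^\infty$ bound from Lemma~\ref{stime-Lq-alte} and H\"older interpolation give~\eqref{eq: conv-tutti-Lq}, and then the translation-uniform Calder\'on--Zygmund estimate~\eqref{eq: stima-rego-calpha-bis}, with constants bounded as $\gamma_n\to 0$, is fed the $\mathrm L^{rq}$-convergence to make the $C^{1,\alpha}$-seminorms over $\overline B_1(x_0)$ small (uniformly in $n$) for $|x_0|$ large, while local relative compactness handles any fixed compact set. One minor simplification you make is to choose $q$ strictly larger than both $2^\ast$ and $d/(1-\alpha)$, which avoids the extra H\"older interpolation the paper inserts for the case $q\in(d,2^\ast)$, $d=3$; otherwise the decomposition into a bounded piece and a tail piece, and the gluing of the local $C^{1,\alpha}$-estimates into a global one, match the paper's Steps.
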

\begin{proof} From Proposition~\ref{prop: conc-comp} and Sobolev's inequality~\eqref{eq: sobolev-H} we know that $\{v_n\}$ converges to $w_0$ in $\mathrm L^{2^\ast}(\R^d)$. By Lemma~\ref{stime-Lq-alte} and Proposition~\ref{prop: conc-comp}, the sequence $\{v_n\}$ is bounded and $w_0$ is also bounded. Identity~\eqref{eq: conv-tutti-Lq} results from H\"older's inequality: $\nrm{v_n-w_0}q\le\nrm{v_n-w_0}{2^\ast}^{2^\ast\!/q}\,\nrm{v_n-w_0}\infty^{1-2^\ast\!/q}$.

In order to prove~\eqref{eq: conv-C}, we shall make use of the two following inequalities:
\be{eq: dis-rozze-holder}
\left\|w\right\|_{C^{1,\alpha}(\R^d)}\le 4\sup_{x_0\in\R^d}\left\|w\right\|_{C^{1,\alpha}\(\overline{B}_1(x_0)\)}
\ee
and, for any $R>0$,
\be{eq: dis-rozze-holder-prova-2}
\sup_{x_0\in B_R}\left\|w\right\|_{C^{1,\alpha}\(\overline{B}_1(x_0)\)}\le 2\,(1+2^{\,\alpha})\sup_{x_0\in B_{R+1}}\left\|w\right\|_{C^{1,\alpha}\(\overline{B}_{1/2}(x_0)\)}
\ee
where $w$ is any function such that the r.h.s.~in~\eqref{eq: dis-rozze-holder} and~\eqref{eq: dis-rozze-holder-prova-2} are finite. Proofs are elementary and left to the reader. Clearly there exists a suitable number $N_{\!R}\in\N$ and a set of points $\{y_k\}_{k=1,\,2,\ldots N_{\!R}}\subset B_{R+1}$ such that, for every $x_0\in B_{R+1}$, $\overline{B}_{1/2}(x_0)\subset\overline{B}_1(y_k)$ for some $k\in\{1,\ldots,N_{\!R}\}$ depending on~$x_0$. Recalling~\eqref{eq: dis-rozze-holder-prova-2}, we get
\[\label{eq: dis-rozze-holder-prova-4}
\sup_{x_0\in B_R}\left\|w\right\|_{C^{1,\alpha}\(\overline{B}_1(x_0)\)}\le 2\,(1+2^{\,\alpha})\max_{k=1,\,2,\ldots N_{\!R}}\left\|w\right\|_{C^{1,\alpha}\(\overline{B}_1(y_k)\)}\,.
\]
Using~\eqref{eq: dis-rozze-holder}, we deduce that
\begin{multline}\label{eq: dis-rozze-holder-finale}
\hspace*{-6pt}\left\|w\right\|_{C^{1,\alpha}(\R^d)}\\
\le\max\Big\{8\,(1+2^{\,\alpha})\max_{k=1,\,2,\ldots N_{\!R}}\left\|w\right\|_{C^{1,\alpha}\(\overline{B}_1(y_k)\)},4\,\sup_{x_0\in B_R^c}\left\|w\right\|_{C^{1,\alpha}\(\overline{B}_1(x_0)\)}\Big\}\,.
\end{multline}

Given $\alpha\in(0,1)$, let $q=d/(1-\alpha)$. In view of Lemma~\ref{lem-rego-calpha}, there exists a positive constant $C$ depending only on $\alpha$, $p$ and $d$ such that, for $n$ large enough, for any $x_0\in\R^d$,
\begin{multline}\label{eq: stima-controllo-integrale-x0}
\left\|v_n\right\|_{C^{1,\alpha}\(\overline{B}_1(x_0)\)}\le C\left(\|v_n\|_{\mathrm L^q(B_2(x_0))}\hspace*{-3pt}+\|v_n\|_{\mathrm L^{(2p-1)q}(B_2(x_0))}^{2p-1}\hspace*{-3pt}+\|v_n\|_{\mathrm L^{p\kern0.5pt q}(B_2(x_0))}^p\right.\\
\left.+\,\|v_n\|_{\mathrm L^{2p\kern0.5pt q}(B_2(x_0))}^{2p-1}+\|v_n\|_{\mathrm L^{2p\kern0.5pt q}(B_2(x_0))}^p\right)\,.
\end{multline}
Thanks to~\eqref{eq: conv-tutti-Lq} and~\eqref{eq: stima-controllo-integrale-x0}, for every $\varepsilon>0$ there exist $R>0$ and $n_0\in\N$ such that
\[\label{eq: stima-controllo-holder-vn-u}
\sup_{x_0\in B^c_R}\left\|v_n-w_0\right\|_{C^{1,\alpha}\(\overline{B}_1(x_0)\)}\le\varepsilon\quad\forall\,n\ge n_0\,.
\]
In case $q\in(d,2^\ast)$ and $d=3$, one more H\"older interpolation is needed. Using~\eqref{eq: dis-rozze-holder-finale} with $w=v_n-w_0$, we obtain:
\[
\left\|v_n-w_0\right\|_{C^{1,\alpha}(\R^d)}\le\max\Big\{8\,(1+2^{\,\alpha})\,\max_{k=1,\,2,\ldots N_{\!R}}\left\|v_n-w_0\right\|_{C^{1,\alpha}\(\overline{B}_1(y_k)\)},\,4\,\varepsilon\Big\}
\]
for all $n\ge n_0$. From~\eqref{eq: stima-controllo-integrale-x0}, we know that $\{v_n\}$ is bounded in $C^{1,\alpha}\(\overline{B}_1(x_0)\)$ for all $x_0\in\R^d$, relatively compact by Lemma~\ref{lem-rego-calpha} and, as a consequence,
\[\label{eq: conve-locale-holder}
\lim_{n\to\infty} \left\|v_n-w_0\right\|_{C^{1,\alpha}\(\overline B_1(y_k)\)}=0\quad\forall\,k=1,\,2\ldots N_{\!R}\,.
\]
This concludes the proof.
\qed
\end{proof}

A uniform upper bound on $v_n$ results from Proposition~\ref{prop: conc-comp} and Corollary~\ref{eq: coro-improved-conv}. The proof relies on the use of a barrier function and of the Maximum Principle.
\begin{pro}\label{lem: barriera} Under the notations and assumptions of Proposition~\ref{prop: conc-comp}, there exists a positive constant $C$ and a positive integer $N$ such that
\[\label{eq: stima-barriera-globale}
v_n(x)\le C\,\big(1+|x|\big)^{-\frac{2-\gamma_n}{p-1}}\quad\forall\,x\in\R^d\,,\quad\forall\,n\ge N\,.
\]
\end{pro}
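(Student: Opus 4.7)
My plan is to construct a one-sided barrier modeled on the Barenblatt profile $w_{\gamma_n}^\star$ of~\eqref{eq: barenblatt-weight-intro} and apply the maximum principle to the translated Euler-Lagrange equation satisfied by $v_n$,
\[
-\Delta v_n + |x+y_n|^{-\gamma_n}\bigl(v_n^p - v_n^{2p-1}\bigr) = 0.
\]
Since Corollary~\ref{eq: coro-improved-conv} gives $v_n \to w_0$ uniformly on $\R^d$ and $w_0(x) \to 0$ as $|x|\to\infty$, I can fix $R_0 > 0$ and $N_1 \in \N$ such that $v_n(x)^{p-1} \le 1/2$ whenever $n \ge N_1$ and $|x| \ge R_0$. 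The equation then immediately yields the differential inequality
\[
-\Delta v_n + \tfrac12\, |x+y_n|^{-\gamma_n} v_n^p \le 0 \quad\text{on } B_{R_0}^c.
\]

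Next, set $s_n := (2-\gamma_n)/(p-1)$ and try the barrier $\phi_n(x) := C\,(1+|x|)^{-s_n}$. A direct radial computation shows that, for $|x|\ge 1$, $-\Delta \phi_n \ge -C\,\kappa_n\,(1+|x|)^{-s_n-2}$ where $\kappa_n := s_n(s_n+2-d) > 0$ (note that $s_n > d-2$ is guaranteed by $p < (d-\gamma_n)/(d-2)$, and $\kappa_n$ is uniformly bounded as $\gamma_n\to 0$). Meanwhile, the sharp information $|y_n|^{\gamma_n} \to 1$ from Proposition~\ref{prop: conc-comp}, combined with a simple case split on the regions $|x| \le |y_n|$ and $|x| \ge |y_n|$, yields a uniform lower bound $|x+y_n|^{-\gamma_n} \ge c_0\,(1+|x|)^{-\gamma_n}$ with $c_0>0$ independent of $n$. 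The key algebraic identity
\[
s_n + 2 \;=\; \gamma_n + p\,s_n \;=\; \frac{2p-\gamma_n}{p-1},
\]
which reflects the matching of decay rates that makes the Barenblatt function a genuine solution of its limit equation, implies
\[
-\Delta\phi_n + \tfrac12\, |x+y_n|^{-\gamma_n}\phi_n^p \;\ge\; C\,\Bigl[\tfrac{c_0}{2}\, C^{p-1} - \kappa_n\Bigr]\,(1+|x|)^{-(2p-\gamma_n)/(p-1)},
\]
and choosing $C$ large enough depending only on $p$ and $d$ makes the right-hand side nonnegative on $B_{R_0}^c$.

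The uniform $L^\infty$ bound on $\{v_n\}$ (available from Lemma~\ref{stime-Lq-alte} together with the boundedness of $\|\nabla v_n\|_2$, or equivalently from Corollary~\ref{eq: coro-improved-conv}) allows me to further enlarge $C$ so that $\phi_n \ge v_n$ on $\partial B_{R_0}$ and on the whole of $\overline{B_{R_0}}$. Subtracting the sub- and super-solution inequalities, $w := \phi_n - v_n$ satisfies $-\Delta w + \tfrac12\, |x+y_n|^{-\gamma_n}(\phi_n^p - v_n^p) \ge 0$ on $B_{R_0}^c$. On the open set $\Omega := \{x \in B_{R_0}^c : w(x) < 0\}$ the nonlinear term is nonpositive, hence $-\Delta w \ge 0$; since $w \ge 0$ on $\partial B_{R_0}$ and $w(x) \to 0$ as $|x| \to \infty$, applying the maximum principle on the exhaustion $\Omega \cap B_R$ and letting $R \to \infty$ forces $\Omega = \emptyset$. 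This gives the pointwise bound $v_n \le \phi_n$ on all of $\R^d$ for $n \ge N := N_1$, as claimed.

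The main technical obstacle is the potential divergence $|y_n| \to \infty$, which a priori could make the shifted weight $|x+y_n|^{-\gamma_n}$ behave incompatibly with any centered barrier. The role played by the delicate asymptotic $|y_n|^{\gamma_n} \to 1$, extracted in the concentration-compactness step and intimately tied to the subcriticality $p < d/(d-2)$, is precisely to compare $|x+y_n|^{-\gamma_n}$ with $(1+|x|)^{-\gamma_n}$ with $n$-independent constants, which is what allows the barrier constant $C$ to be chosen uniformly in~$n$.
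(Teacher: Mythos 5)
Your proof is correct and takes essentially the same route as the paper: reduce the Euler--Lagrange equation to a one-sided differential inequality on an exterior ball using the uniform $C^{1,\alpha}$-convergence $v_n\to w_0$ together with $|y_n|^{\gamma_n}\to 1$, construct a supersolution of Barenblatt type with the decay exponent $(2-\gamma_n)/(p-1)$ (relying on the same algebraic matching $s_n+2=p\,s_n+\gamma_n$), and close with the maximum principle. The only cosmetic difference is your regularized barrier $C\,(1+|x|)^{-(2-\gamma_n)/(p-1)}$, which avoids the singularity at the origin and yields the global estimate directly, whereas the paper uses $C_n\,|x|^{-(2-\gamma_n)/(p-1)}$ on $B_{R_2}^c$ and extends to $\R^d$ by the uniform bound inside $B_{R_2}$.
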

\begin{proof} In view of~\eqref{prob: Eul-Lag-one}, $v_n$ solves
\[\label{eq: equazione-v_n}
-\,\Delta v_n=\frac{v_n^{2p-1}-v_n^p}{|x+y_n|^{\gamma_n}}\quad\mbox{in}\quad\R^d\,.
\]
In view of~\eqref{eq: conv-C}, and recalling the explicit profile of $w_0$ given by~\eqref{eq: expl-minim}, we infer that there exist $R_0>0$ and $n_0\in\N$ such that
\[\label{eq: equazione-v_n-1}
v_n(x)\le 2^{-\frac1{p-1}}\quad\forall\,x\in B^c_{R_0}\,,\quad\forall\,n\ge n_0\,.
\]
In particular,
\[\label{eq: equazione-v_n-sottosol-1}
-\,\Delta v_n\le-\,\frac{v_n^p}{2\,|x+y_n|^{\gamma_n}}\quad\mbox{in}\quad B^c_{R_0}\,,\quad\forall\,n\ge n_0\,.
\]
Using the fact that $=\lim_{n\to\infty}|y_n|^{\gamma_n}=\ell=1$, an elementary computation allows us to prove that there exist $R_1>0$ and $n_1\in\N$ such that
\[\label{eq: dis-elementary}
\left|x+y_n\right|^{\gamma_n}\le 2\,|x|^{\gamma_n}\quad\forall\,x\in B^c_{R_1}\,,\quad\forall\,n\ge n_1\,.
\]
Let $R_2:=\max\{R_0,\,R_1\}$ and $n_2:=\max\{n_0,\,n_1\}$. We infer that $v_n$ satisfies
\be{eq: equazione-v_n-sottosol-2}
-\,\Delta v_n\le-\,\frac{v_n^p}{4\,|x|^{\gamma_n}}\quad\mbox{in}\quad B^c_{R_2}\,,\quad\forall\,n\ge n_2\,.
\ee
The function
\[
\widehat v_n(x):=C_n\,|x|^{-\frac{2-\gamma_n}{p-1}}\quad\mbox{with}\quad{\textstyle C_n^{p-1}\ge4\,\frac{2-\gamma_n}{p-1}\big(\frac{2-\gamma_n}{p-1}+2-d\big)}
\]
is a supersolution to~\eqref{eq: equazione-v_n-sottosol-2}, where $C_n>0$ can be chosen to be bounded independently of $n$ and such that
\[\label{eq: condizione-soprasol-bordo}
\widehat{v}_n(x)\ge v_n(x)\quad\forall\,x\in\partial B_{R_2}\,,\quad\forall\,n\ge n_3\,,
\]
for some $n_3$ large enough. This can be done because, from~\eqref{eq: conv-C}, we know that $\{v_n\}$ is bounded uniformly by a constant independent of $n$, for $n$ large enough. By applying the Maximum Principle, we then obtain that
\[\label{eq: last-barriera}
v_n(x)\le\widehat{v}_n(x)\quad\forall\,x\in B_{R_2}^c\,,\quad\forall\,n\ge N:=\max\{n_2,\,n_3\}\,.
\]
This concludes the proof.
\qed
\end{proof}

\section{Analysis of the asymptotic translation invariance}\label{sec: bnd-transl}

Proposition~\ref{prop: conc-comp} establishes the convergence of $\{v_n\}=\{w_{\gamma_n}(\cdot+y_n)\}$ to $w_0$ for some sequence of translations $\{y_n\}$. Proceeding in the spirit of~\cite{AB}, we prove that $\{y_n\}$ is necessarily bounded, which directly entails the convergence of $\{w_{\gamma_n}\}$ to $w_0(\cdot-\overline y)$ for some $\overline y\in\R^d$, up to the extraction of a subsequence. Finally, using a \emph{Selection Principle}, we shall prove that $\overline y=0$.
\begin{lem}\label{lem: bound-trans1} With the notations of Proposition~\ref{prop: conc-comp}, we have
\[\label{eq: bound-deriv1}
\limsup_{n\to\infty}\iRd{\(\kappa_n\,|x|^{-\gamma_n}\, w_{\gamma_n}^{2p}-\,\tfrac1{p+1}\,w_{\gamma_n}^{p+1}\)\log|x|}<\infty\, ,
\]
where $\{\kappa_n \}$ is a suitable sequence converging to $\tfrac1{2p}$.
\end{lem}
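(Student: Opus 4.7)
I would compare the energies at the single test point $w_{\gamma_n}$. Since $w_{\gamma_n}$ is optimal for $\mathcal{E}_{\gamma_n}$, we have $\mathcal{E}_{\gamma_n}[w_{\gamma_n}]=0$, while the Caffarelli--Kohn--Nirenberg inequality at $\gamma=0$ evaluated at the non-optimal $w_{\gamma_n}$ gives $\mathcal{E}_0[w_{\gamma_n}]\ge0$. Subtracting, the Dirichlet term cancels and one obtains
\[
0\le\mathcal{E}_0[w_{\gamma_n}]-\mathcal{E}_{\gamma_n}[w_{\gamma_n}]=\tfrac{1}{p+1}\int_{\mathbb{R}^d}w_{\gamma_n}^{p+1}\bigl(1-|x|^{-\gamma_n}\bigr)\,dx+\mathsf{J}_{\gamma_n}M_n^{\theta_{\gamma_n}}-\mathsf{J}_0\,A_0(w_{\gamma_n})^{\theta_0},
\]
where $A_0(w):=\int w^{2p}\,dx$ and $M_n=\|w_{\gamma_n}\|_{2p,\gamma_n}^{2p}$. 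Writing $A_0(w_{\gamma_n})=M_n+R_n$ with $R_n:=\int w_{\gamma_n}^{2p}(1-|x|^{-\gamma_n})\,dx$, Taylor-expanding $\mathsf{J}_0A_0(w_{\gamma_n})^{\theta_0}$ to first order around $M_n$, and setting
\[
\kappa_n:=\mathsf{J}_0\,\theta_0\,M_n^{\theta_0-1},
\]
I arrive at
\[
\kappa_n R_n-\tfrac{1}{p+1}\int_{\mathbb{R}^d}w_{\gamma_n}^{p+1}\bigl(1-|x|^{-\gamma_n}\bigr)\,dx\le\bigl(\mathsf{J}_{\gamma_n}M_n^{\theta_{\gamma_n}}-\mathsf{J}_0M_n^{\theta_0}\bigr)+O(\gamma_n^2).
\]
The convergence $\kappa_n\to 1/(2p)$ then follows from $M_n\to M_0$ (Proposition~\ref{prop: conc-comp}) together with the mass identity~\eqref{M-I} specialized to $\gamma=0$, which reads exactly $\mathsf{J}_0\theta_0M_0^{\theta_0-1}=1/(2p)$.

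The next step is to bound the ``bad'' right-hand side by $O(\gamma_n)$. This is the place where the rescaling idea enters: instead of differentiating $\mathsf{J}_\gamma$ directly, I would use $w_0$ itself as a test function for the problem at $\gamma=\gamma_n$. From $\mathcal{E}_{\gamma_n}[w_0]\ge0=\mathcal{E}_0[w_0]$ one derives
\[
\mathsf{J}_{\gamma_n}A_{\gamma_n}(w_0)^{\theta_{\gamma_n}}-\mathsf{J}_0M_0^{\theta_0}\le\tfrac{1}{p+1}\int_{\mathbb{R}^d}w_0^{p+1}\bigl(|x|^{-\gamma_n}-1\bigr)\,dx=O(\gamma_n),
\]
where the last estimate uses the fast decay of $w_0$ given by~\eqref{eq: expl-minim} and dominated convergence. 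Transferring from $M_0$ to $M_n$ via the smoothness of $\gamma\mapsto\theta_\gamma$ and the consequent $O(\gamma_n)$ bound on $|M_n-M_0|$ then yields $\mathsf{J}_{\gamma_n}M_n^{\theta_{\gamma_n}}-\mathsf{J}_0M_n^{\theta_0}\le O(\gamma_n)$.

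Finally, I would divide the main inequality by $\gamma_n$ and use the exact identity $1-|x|^{-\gamma_n}=\int_0^{\gamma_n}|x|^{-s}\log|x|\,ds$ together with a mean-value step to rewrite $R_n/\gamma_n$ and its $(p+1)$-analogue as integrals of the form $\int w_{\gamma_n}^{q}|x|^{-s_n^\star}\log|x|\,dx$ for some $s_n^\star\in(0,\gamma_n)$. Comparing these to the target integrals $\int w_{\gamma_n}^{2p}|x|^{-\gamma_n}\log|x|\,dx$ and $\int w_{\gamma_n}^{p+1}\log|x|\,dx$ costs at most $O(\gamma_n)\cdot\sup_{s\in[0,\gamma_n]}\int w_{\gamma_n}^q|x|^{-s}\log^2|x|\,dx$, a quantity kept uniformly bounded by the pointwise barrier of Proposition~\ref{lem: barriera} and the $L^\infty$ estimate of Lemma~\ref{stime-Lq-alte}. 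Passing to $\limsup_n$ then produces the stated bound.

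\textbf{Main obstacle.} The crux is controlling $(\mathsf{J}_{\gamma_n}M_n^{\theta_{\gamma_n}}-\mathsf{J}_0M_n^{\theta_0})/\gamma_n$ without any \emph{a priori} regularity of $\gamma\mapsto\mathsf{J}_\gamma$: this is precisely the difficulty flagged in step~3 of the introduction, and the resolution is to use the explicit decay of the fixed Barenblatt $w_0$ as a one-sided substitute for a derivative estimate on $\mathsf{J}_\gamma$. A subsidiary technical point is to justify the replacement of the $(1-|x|^{-\gamma_n})/\gamma_n$ integrals by the weighted log-integrals in the stated form, which is where the uniform decay and regularity results of Sections~\ref{sect: stime-apriori} and~\ref{sec: conc-comp} are used in an essential way.
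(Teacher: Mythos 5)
Your overall strategy is sound and genuinely different from the paper's. The paper's key trick is to compare $w_{\gamma_n}$ not with $w_0$ itself but with a \emph{rescaled} Barenblatt $W_n(x)=\beta_n^{2/(p-1)}w_0(\beta_n x)$, chosen so that $\|W_n\|_{2p}^{2p}=\|w_{\gamma_n}\|_{2p}^{2p}$: after decomposing $\mathcal E_\gamma-\mathcal E_0=\gamma\,\mathsf D_\gamma+(\mathsf J_0-\mathsf J_\gamma)\|\cdot\|_{2p}^{2p\theta_0}$, the $(\mathsf J_0-\mathsf J_\gamma)$ terms cancel \emph{identically} when comparing $w_{\gamma_n}$ and $W_n$, and no information about the rate of $\mathsf J_\gamma\to\mathsf J_0$ is ever needed. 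You instead use the fixed function $w_0$ as test function and compensate by extracting a one-sided bound on $\mathsf J_{\gamma_n}$. This is a legitimate alternative, but as written it has two gaps.

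First, your Taylor expansion of $A_0(w_{\gamma_n})^{\theta_0}$ around $M_n$ carries a remainder you declare $O(\gamma_n^2)$, which after dividing by $\gamma_n$ must be $O(\gamma_n)$. This would follow from $R_n=\int w_{\gamma_n}^{2p}(1-|x|^{-\gamma_n})\,dx=O(\gamma_n)$, but that bound is not available here: if $|y_n|\to\infty$ then $\int w_{\gamma_n}^{2p}|\log|x||\,dx$ grows like $\log|y_n|$, and from Proposition~\ref{prop: conc-comp} we only know $\gamma_n\log|y_n|\to 0$, not $\gamma_n\log|y_n|=O(\gamma_n)$; consequently $R_n^2/\gamma_n\lesssim\gamma_n(\log|y_n|)^2$ may diverge. (This is not an incidental technicality: controlling precisely this kind of $\log|y_n|$-growth is the whole point of the lemma.) The fix, which is exactly what the paper does, is to use the mean value theorem in exact form, $A_0^{\theta_0}-M_n^{\theta_0}=\theta_0\,\mu_n^{\theta_0-1}R_n$ with $\mu_n$ between $M_n$ and $A_0(w_{\gamma_n})=m_n$ (both $\to M_0$), and take $\kappa_n:=\mathsf J_0\,\theta_0\,\mu_n^{\theta_0-1}\to 1/(2p)$. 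Then there is no remainder at all.

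Second, the phrase ``the smoothness of $\gamma\mapsto\theta_\gamma$ and the consequent $O(\gamma_n)$ bound on $|M_n-M_0|$'' is not correct: $M_n=(2p\,\theta_{\gamma_n}\mathsf J_{\gamma_n})^{1/(1-\theta_{\gamma_n})}$ also depends on $\mathsf J_{\gamma_n}$, whose modulus of continuity is unknown, so smoothness of $\theta$ alone cannot give $|M_n-M_0|=O(\gamma_n)$, and assuming it would be circular. However, you do not need this. The test-function computation with $w_0$ gives the one-sided bound $\mathsf J_{\gamma_n}\le\mathsf J_0+O(\gamma_n)$ (because $A_{\gamma_n}(w_0)=M_0+O(\gamma_n)$ by the explicit decay of $w_0$). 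Since $M_n^{\theta_{\gamma_n}}$ is positive and converges to $M_0^{\theta_0}$, this immediately yields the upper bound $(\mathsf J_{\gamma_n}-\mathsf J_0)M_n^{\theta_{\gamma_n}}\le O(\gamma_n)$, regardless of the sign of $\mathsf J_{\gamma_n}-\mathsf J_0$; combined with $\mathsf J_0\,(M_n^{\theta_{\gamma_n}}-M_n^{\theta_0})=O(\gamma_n)$ from the smoothness of $\theta$ (here boundedness of $M_n$ suffices), you get $\mathsf J_{\gamma_n}M_n^{\theta_{\gamma_n}}-\mathsf J_0 M_n^{\theta_0}\le O(\gamma_n)$, which is what the argument actually uses. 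With these two repairs your proof goes through; a further minor point you should make explicit is that $\mathcal E_0[w_{\gamma_n}]\ge 0$ requires $w_{\gamma_n}\in\mathcal H_{p,0}(\R^d)$, which follows from Proposition~\ref{lem: barriera} for $n$ large, as noted in the paper.
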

The key point of the proof basically relies on an estimate on the derivative with respect to $\gamma$ of the function $\gamma\mapsto\mathcal E_\gamma[w_\gamma]$ at $\gamma=0$. This cannot be done directly because of the unknown value of $\mathsf J_\gamma$, but the difficulty is overcome by adjusting the mass.

\begin{proof} Recall that $\mathcal H_{p,0}(\R^d)\subset\mathcal H_{p,\gamma}(\R^d)$ for any $\gamma\in(0,2)$ such that~\eqref{eq: parameters} holds, according to Remark~\ref{oss: norme-senza-peso}; moreover, in view of Proposition \ref{lem: barriera}, we have that $w_{\gamma_n}\in\mathcal H_{p,0}(\R^d)$ for all $n$ large enough. For any $w\in\mathcal H_{p,0}(\R^d)$, we can therefore write
\[
\mathcal E_\gamma[w]-\mathcal E_0[w]=\gamma\,\mathsf D_\gamma[w]+\(\mathsf J_0-\mathsf J_\gamma\)\nrm w{2p}^{2\kern0.5pt p\kern0.5pt\theta_0}
\]
with
\[\label{eq: bound-deriv4}
\mathsf D_\gamma[w]:=\tfrac1{p+1}\iRd{w^{p+1}\,\frac{|x|^{-\gamma}-1}\gamma}+\mathsf J_\gamma\frac{\nrm w{2p}^{2\kern0.5pt p\kern0.5pt\theta_0}-\nrm w{2p,\gamma}^{2\kern0.5pt p\kern0.5pt\theta_\gamma}}\gamma\,.
\]

Let us introduce the rescaled profile $W_n$ defined by
\[
W_n(x)=\beta_n^\frac2{p-1}\,w_0(\beta_n\,x)\quad\forall\,x\in\R^d\,,
\]
where the scale $\beta_n:=(m_n\,/M)^{\frac{p-1}{d-p\,(d-4)}}$ is such that
\[
\nrm{W_n}{2p}^{2p}=\nrm{w_{\gamma_n}}{2p}^{2p}=:m_n\,,
\]
and recall that $W_n$ is the unique radial minimizer of $\mathcal E_0$ with mass $m_n$. Since $w_{\gamma_n}$ (resp.~$W_n$) minimizes $\mathcal E_{\gamma_n}$ over $\mathcal H_{p,\gamma_n}(\R^d)$ (resp.~$\mathcal E_0$ over $\mathcal H_{p,0}(\R^d)$), as sketched in the introduction, we get that
\[\label{eq: bound-deriv2}
\mathcal E_{\gamma_n}[w_{\gamma_n}]-\mathcal E_0[w_{\gamma_n}]\le\mathcal E_{\gamma_n}[W_n]-\mathcal E_0[W_n]\,,
\]
that is, after dividing by $\gamma_n$,
\be{eq: bound-deriv3}
\mathsf D_{\gamma_n}[w_{\gamma_n}] \le \mathsf D_{\gamma_n}[W_n]
\ee
because the terms involving $(\mathsf J_0-\mathsf J_{\gamma_n})$ cancel out thanks to the particular choice of profile $W_n$.

We recall that $M_n=\(2\,p\,\theta_{\gamma_n}\,\mathsf J_{\gamma_n}\)^{1/(1-\theta_{\gamma_n})}=\nrm{w_{\gamma_n}}{2p,\gamma_n}^{2p}$ according to~\eqref{M-I}. Performing a first order expansion, we get
\be{eq: bound-deriv5}
m_n^{\theta_0}-M_n^{\theta_{\gamma_n}}=\theta_0\,\mu_n^{\theta_0-1}\iRd{w_{\gamma_n}^{2p}\(1-|x|^{-\gamma_n}\)}\\
+M_n^{\vartheta_n}\,\log M_n\,(\theta_0-\theta_{\gamma_n})
\ee
for some intermediate values $\mu_n\in(M_n,m_n)$ and $\vartheta_n\in(\theta_0,\theta_{\gamma_n})$. A similar identity holds for $\|W_n\|_{2p}^{2p\,\theta_0}-\|W_n\|_{2p,\gamma_n}^{2p\,\theta_{\gamma_n}}$.

Thanks to Propositions~\ref{prop: conc-comp} and \ref{lem: barriera}, we deduce that $w_{\gamma_n}(\cdot+y_n)$ converges to $w_0$ in $\mathrm L^{2p}(\R^d)$, so that
\[
\lim_{n\to\infty}m_n=\lim_{n\to\infty}M_n=\lim_{n\to\infty}\mu_n=M\,.
\]
Since $\theta_{\gamma_n}\to\theta_0$, $(\theta_0-\theta_{\gamma_n})/\gamma_n\to-\,\theta'_0=\tfrac{(d-2)\,(p-1)^2}{(d-p\,(d-4))^2}$ and, according to~\eqref{eq: conc-comp-4}, $\mathsf J_{\gamma_n}\to\mathsf J_0$ as $n\to\infty$, as a consequence of~\eqref{eq: bound-deriv3} and~\eqref{eq: bound-deriv5} there holds
\begin{multline*}
\lim_{n\to\infty}\iRd{\(\tfrac1{p+1}\,w_{\gamma_n}^{p+1}\!-\! \kappa_n \,w_{\gamma_n}^{2p}\) \tfrac{|x|^{-\gamma_n}-1}{\gamma_n}}+\tfrac1{2p}\,M \log M\,\tfrac{\theta_0'}{\theta_0}\\
\le \lim_{n\to\infty}\iRd{\(\tfrac1{p+1}\,W_{n}^{p+1}\!-\! \tfrac1{2p} \,W_{n}^{2p}\)\tfrac{|x|^{-\gamma_n}-1}{\gamma_n}}+\tfrac1{2p}\,M \log M\,\tfrac{\theta_0'}{\theta_0}\,,
\end{multline*}
with $\kappa_n := \theta_0\,\mathsf J_{\gamma_n}\, \mu_n^{\theta_0-1} \to \tfrac1{2p}$.

Using the elementary convexity estimates
\[\label{eq: ineq-log-gamma}
1-|x|^{-\gamma}\le\gamma\log|x|\quad\mbox{and}\quad 1-|x|^{-\gamma}\ge\gamma\,|x|^{-\gamma}\log|x|\quad\forall\,x\in\R^d\setminus\{0\}\,,
\]
and the fact that $\beta_n \to 1$, we conclude the proof.
\qed
\end{proof}

\begin{cor}\label{coro: convergenza-u-gamma} With the notations of Proposition~\ref{prop: conc-comp}, there exists $\overline y\in\R^d$ such that, up to the extraction of a subsequence,
\[
\lim_{n\to\infty}\left\|w_{\gamma_n}-w_0(\cdot-\overline y)\right\|_q=\lim_{n\to\infty}\left\|w_{\gamma_n}-w_0(\cdot-\overline y)\right\|_{C^{1,\alpha}(\R^d)}=0
\]
for any $q\in\(d\,\frac{p-1}2 ,\infty\)$ and $\alpha\in(0,1)$. Moreover there exists $N\in\N$ and $C>0$ such that
\be{eq: stima-barriera-globale-u-gamma}
w_{\gamma_n}(x)\le C\(1+|x|\)^{-\frac{2-\gamma_n}{p-1}}\quad\forall\,x\in\R^d\,,\quad\forall\,n\ge N\,.
\ee
\end{cor}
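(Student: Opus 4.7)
The plan is first to use Lemma \ref{lem: bound-trans1} to prove that the translations $\{y_n\}$ of Proposition \ref{prop: conc-comp} are bounded, and then to transfer the convergence of $v_n=w_{\gamma_n}(\cdot+y_n)$ to $w_0$ back onto $w_{\gamma_n}$.

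Suppose, for contradiction, that $|y_n|\to\infty$; by Proposition \ref{prop: conc-comp} we then have $|y_n|^{\gamma_n}\to 1$. Let $I_n$ be the integral in Lemma \ref{lem: bound-trans1}. After the substitution $x=z+y_n$, so that $w_{\gamma_n}(x)=v_n(z)$, and the splitting $\log|z+y_n|=\log|y_n|+\log|1+z/y_n|$, I write $I_n=J_n\log|y_n|+K_n$ with
\[
J_n:=\iRd{\(\kappa_n\,|z+y_n|^{-\gamma_n}\,v_n^{2p}-\tfrac1{p+1}\,v_n^{p+1}\)}
\]
and $K_n$ the analogous integral carrying the extra factor $\log|1+z/y_n|$. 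Reverting the change of variables in $J_n$, using mass conservation $\iRd{|x|^{-\gamma_n}\,w_{\gamma_n}^{2p}}=M_n\to M$, $\kappa_n\to1/(2p)$, and $L^{p+1}$ convergence $v_n\to w_0$ (obtained from the barrier of Proposition \ref{lem: barriera} and dominated convergence, since one checks $p+1>d(p-1)/2$), I get $J_n\to\tfrac{M}{2p}-\tfrac{1}{p+1}\|w_0\|_{p+1}^{p+1}$; crucially, applying the Pohozaev identity to the equation $-\Delta w_0+w_0^p-w_0^{2p-1}=0$ satisfied by $w_0$ rewrites this limit as $\tfrac{d-2}{2d}\|\nabla w_0\|_2^2$, which is \emph{strictly positive}.

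To control the remainder $K_n$, I plan to split $\R^d$ into three regions depending on the magnitude of $|z|$ relative to $|y_n|$. On $\{|z|\le|y_n|/2\}$ the factor $|\log|1+z/y_n||$ is bounded by $\log 2$, so the contribution is trivially controlled by the uniform $L^{2p}$ and $L^{p+1}$ norms of $v_n$. On the annulus $\{|y_n|/2<|z|<2|y_n|\}$ the barrier of Proposition \ref{lem: barriera} forces $v_n^{2p}\lesssim|y_n|^{-2p(2-\gamma_n)/(p-1)}$; combined with the volume $\lesssim|y_n|^d$ and the integrability of the logarithmic singularity at $z=-y_n$ (after substituting $u=z+y_n$), this yields a factor $|y_n|^{d-2p(2-\gamma_n)/(p-1)}$, which vanishes since $p<d/(d-2)$ makes the exponent negative for $n$ large. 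The outer region $\{|z|\ge 2|y_n|\}$ is handled similarly, using $|\log|1+z/y_n||\lesssim\log|z|+\log|y_n|$ against the polynomial tail of $v_n^{2p}$. Hence $K_n=O(1)$, and so $I_n\to+\infty$, contradicting Lemma \ref{lem: bound-trans1}. The main delicate point is the annular estimate, where the decay of $v_n$ must beat the logarithmic singularity: here the subcriticality $p<d/(d-2)$ is exactly what makes the argument work.

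Once $\{y_n\}$ is bounded, I extract a further subsequence with $y_n\to y_\star$ and set $\overline y:=y_\star$. The $C^{1,\alpha}$ convergence of $w_{\gamma_n}=v_n(\cdot-y_n)$ to $w_0(\cdot-\overline y)$ follows from the triangle inequality
\[
\|v_n(\cdot-y_n)-w_0(\cdot-\overline y)\|_{C^{1,\alpha}}\le\|v_n-w_0\|_{C^{1,\alpha}}+\|w_0(\cdot-y_n)-w_0(\cdot-\overline y)\|_{C^{1,\alpha}}
\]
combined with Corollary \ref{eq: coro-improved-conv} and the uniform continuity of $w_0$ and $\nabla w_0$, immediate from the explicit formula \eqref{eq: expl-minim}. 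The $L^q$ convergence for $q\ge 2^*$ is analogous. For $q\in(d(p-1)/2,2^*)$, pointwise a.e.\ convergence together with the uniform dominating function $C(1+|x|)^{-q\alpha_\star}$ — for any fixed $\alpha_\star\in(d/q,2/(p-1))$, which dominates for $n$ large as $\gamma_n\downarrow 0$ — yields $L^q$ convergence by dominated convergence. Finally, \eqref{eq: stima-barriera-globale-u-gamma} follows from Proposition \ref{lem: barriera} and the boundedness of $\{y_n\}$: since $|x-y_n|\ge|x|/2$ for $|x|\ge 2\sup_n|y_n|$, the decay estimate for $v_n$ transfers to $w_{\gamma_n}$ after a harmless adjustment of the constant.
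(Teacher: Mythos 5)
Your proof is correct and follows essentially the same route as the paper. The paper also argues by contradiction, assuming $|y_n|\to\infty$, translating the integral in Lemma~\ref{lem: bound-trans1} onto $v_n$, extracting the dominant $\log|y_n|$ contribution, and invoking positivity of $\iRd{\bigl(\tfrac{w_0^{2p}}{2p}-\tfrac{w_0^{p+1}}{p+1}\bigr)}$ to conclude. The one genuine difference is how positivity is established: the paper uses the identity~\eqref{eq: i-f}, obtained by multiplying~\eqref{prob: Eul-Lag-one} (with $\gamma=0$) by $w_0$ and integrating, combined with the mass relation~\eqref{M-I}, whereas you invoke the Pohozaev identity for $-\Delta w_0+w_0^p-w_0^{2p-1}=0$ to write the same quantity as $\tfrac{d-2}{2d}\nrm{\nabla w_0}2^2$. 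One can check (combining Pohozaev with the $w_0$-tested identity) that your expression and~\eqref{eq: i-f} agree, so this is an equivalent derivation; arguably Pohozaev makes the positivity even more immediate. For the remainder term $K_n$, the paper only says ``long but elementary computations'' and cites the thesis~\cite{MuratoriThesis2015}; you actually carry out a clean three-region splitting using the barrier from Proposition~\ref{lem: barriera}, which is precisely the computation the paper leaves implicit. The tail of the argument — extracting $\overline y$, getting $C^{1,\alpha}$ and $\mathrm L^q$ ($q\ge 2^\ast$) convergence from Proposition~\ref{prop: conc-comp} and Corollary~\ref{eq: coro-improved-conv}, and covering $q\in\bigl(d\tfrac{p-1}2,2^\ast\bigr)$ via the uniform barrier and dominated convergence — matches the paper.

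One minor cosmetic remark: the decomposition $\log|z+y_n|=\log|y_n|+\log|1+z/y_n|$ should be read as $\log|z+y_n|=\log|y_n|+\log\bigl|\hat y_n+z/|y_n|\bigr|$ with $\hat y_n=y_n/|y_n|$, since $y_n$ is a vector; the intent is clear and the estimate goes through unchanged.
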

\begin{proof} Let us prove that $\{y_n\}$ is bounded. Assume by contradiction that $|y_n|\to+\infty$. With $v_n=w_{\gamma_n}(\cdot+y_n)$ we obtain that
\be{e5}
\begin{aligned}
&\iRd{\(\kappa_n\,|x|^{-\gamma_n}\,w_{\gamma_n}^{2p}-\,\tfrac{w_{\gamma_n}^{p+1}}{p+1}\)\log|x|}\\
=&\iRd{\(\kappa_n\,|x+y_n|^{-\gamma_n}\,v_n^{2p} -\,\tfrac{v_n^{p+1}}{p+1}\)\log|x+y_n|}\,.
\end{aligned}
\ee
By means of Propositions \ref{prop: conc-comp}, \ref{lem: barriera} and of the fact that $\kappa_n \to \tfrac1{2p}$, long but elementary computations show that the r.h.s.~of~\eqref{e5} behaves, as $n \to \infty$, like
\[
\log|y_n|\,\iRd{\(\tfrac{w_0^{2p}}{2\,p}-\,\tfrac{w_0^{p+1}}{p+1}\)}\,.
\]
For details we refer again to \cite[Proof of Lemma 3.3.1]{MuratoriThesis2015}. On the other hand, with $\gamma=0$, using~\eqref{M-I} and an identity obtained by multiplying~\eqref{prob: Eul-Lag-one} by $w_0$ and integrating over $\R^d$, we get
\be{eq: i-f}
\iRd{\(\tfrac{w_0^{2p}}{2\,p}-\,\tfrac{w_0^{p+1}}{p+1}\)}=\tfrac{(p-1)\,(d-2)\,M}{2\,p\,\(d+2-p\,(d-2)\)}>0\,.
\ee
This contradicts Lemma~\ref{lem: bound-trans1}.

Hence we can extract a subsequence such that $\{y_n\}$ converges to $\overline y$ and get the convergence result by applying Proposition~\ref{prop: conc-comp} and Corollary~\ref{eq: coro-improved-conv} if $q\ge2^\ast$. The uniform estimate~\eqref{eq: stima-barriera-globale-u-gamma} directly follows from Proposition~\ref{lem: barriera}. To cover the range $q\in(d\,(p-1)/2,2^\ast)$, we observe that the r.h.s.~of such an estimate belongs to $\mathrm L^q(\R^d)$ for any $n$ large enough.
\qed
\end{proof}

\begin{pro}\label{lem: minimizer} With the notations of Corollary~\ref{coro: convergenza-u-gamma}, we have $\overline y=0$.
\end{pro}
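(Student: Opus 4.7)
The plan is to invoke a Selection Principle in the spirit of~\cite{AB}: the goal is to show that $\overline y$ minimizes the radial functional
\[
\phi(z) := \iRd{h(x)\log|x+z|}\,, \qquad h(x) := \frac{w_0^{2p}(x)}{2\,p} - \frac{w_0^{p+1}(x)}{p+1}\,,
\]
and then to check that $\phi$ attains its infimum only at $z = 0$. The starting point is an enhanced version of the competitor inequality of Lemma~\ref{lem: bound-trans1}. For any fixed $z \in \R^d$, the translate $W_n(\cdot - z)$ is an admissible test function for $\mathcal E_{\gamma_n}$, and because $\mathcal E_0$ is translation invariant it remains a minimizer of $\mathcal E_0$ with mass $m_n = \|w_{\gamma_n}\|_{2p}^{2p}$. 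Repeating the argument leading to~\eqref{eq: bound-deriv3} verbatim with $W_n$ replaced by $W_n(\cdot - z)$ therefore yields
\[
\mathsf D_{\gamma_n}[w_{\gamma_n}] \le \mathsf D_{\gamma_n}\bigl[W_n(\cdot - z)\bigr] \qquad \forall\,z \in \R^d\,,
\]
the contributions involving $\mathsf J_0 - \mathsf J_{\gamma_n}$ cancelling thanks to the identity $\|W_n(\cdot - z)\|_{2p} = \|W_n\|_{2p} = \|w_{\gamma_n}\|_{2p}$.

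Next I would pass to the limit $n \to \infty$ on both sides. Using the two-sided convexity bounds $-\log|x| \le (|x|^{-\gamma_n}-1)/\gamma_n \le -|x|^{-\gamma_n}\log|x|$ together with the strong $C^{1,\alpha}$ convergence $w_{\gamma_n} \to w_0(\cdot - \overline y)$ and the uniform barrier~\eqref{eq: stima-barriera-globale-u-gamma} from Corollary~\ref{coro: convergenza-u-gamma}, plus the convergence $W_n \to w_0$ (since $m_n \to M$), one sandwiches and then identifies the limits of both sides. The constants coming from the expansion of $\|\cdot\|_{2p,\gamma_n}^{2p\kern0.5pt\theta_{\gamma_n}} - \|\cdot\|_{2p}^{2p\kern0.5pt\theta_0}$ are independent of $z$ and coincide with those already produced in the proof of Lemma~\ref{lem: bound-trans1}, so they cancel, and (after the change of variable $x\mapsto x-z$) the surviving inequality is precisely
\[
\phi(\overline y) \le \phi(z) \qquad \forall\,z \in \R^d\,.
\]

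It then remains to prove that $\phi$ is uniquely minimized at $z = 0$. Since $h$ is radial so is $\phi$, and $\phi(z) \to +\infty$ as $|z| \to \infty$ because $\iRd{h} > 0$ by~\eqref{eq: i-f}. A Taylor expansion gives
\[
\phi(z) - \phi(0) = \frac{d-2}{2\,d}\,|z|^2 \iRd{\frac{h(x)}{|x|^2}} + O(|z|^3)\,,
\]
and plugging in the Barenblatt expression $w_0(x) = \bigl(a_0/(b_0+|x|^2)\bigr)^{1/(p-1)}$ reduces $\iRd{h(x)/|x|^2}$ to a combination of Beta integrals whose net sign is that of $\eta\,[\eta + 2\,p\,(d-1)]$, with $\eta = d - p\,(d-2) > 0$; thus $z = 0$ is a strict local minimum. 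The main technical hurdle is to globalize this. The natural route is to note that
\[
\Delta_z \phi(z) = (d-2)\iRd{\frac{h(x)}{|x-z|^2}}\,,
\]
and to show that this integral is positive for every $z \in \R^d$ by a parallel Barenblatt computation, exploiting the radial symmetry of $h$ and its single sign change together with the explicit form of $w_0$. Strict subharmonicity of the radial function $\phi$ then forces $r \mapsto r^{d-1}\phi'(r)$ to be strictly increasing; combined with the positivity of $\phi'$ for small $r > 0$ from the expansion above, this yields $\phi'(r) > 0$ for every $r > 0$, so the minimum of $\phi$ is achieved only at $z = 0$, and therefore $\overline y = 0$.
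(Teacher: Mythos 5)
The first half of your argument — translating $W_n$ by an arbitrary $z$, exploiting the translation invariance of $\mathcal E_0$ so that the $\mathsf J_0-\mathsf J_{\gamma_n}$ terms still cancel, and passing to the limit to obtain $\phi(\overline y)\le\phi(z)$ for all $z\in\R^d$ — is correct and is exactly what the paper does. The difference lies entirely in how you try to prove that $\phi$ is uniquely minimized at $z=0$.

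There you have a genuine gap. Your globalization rests on the claim that $\Delta_z\phi(z)=(d-2)\iRd{h(x)\,|x+z|^{-2}}>0$ for every $z$, which you say can be proved ``by a parallel Barenblatt computation, exploiting the radial symmetry of $h$ and its single sign change.'' For that style of argument to work one needs the radial kernel — here the spherical average of $y\mapsto|y|^{-2}$ over the sphere $|x|=r$ centred at $-z$ — to be a non-increasing function of $r$, so that the one sign change of $\mathcal K$ together with $\int_0^\infty\mathcal K(r)\,r^{d-1}dr>0$ forces positivity. This monotonicity fails in low dimension: for $d=3$ one computes explicitly that the average equals $\tfrac1{2r|z|}\log\tfrac{r+|z|}{|r-|z||}$, which increases from $|z|^{-2}$ to $+\infty$ as $r\uparrow|z|$ (indeed $|y|^{-2}$ is \emph{sub}harmonic in $\R^3\setminus\{0\}$) and only then decreases; for $d=4$ it is merely constant on $(0,|z|]$ and hence not strictly decreasing. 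So the proposed single-sign-change comparison does not give $\Delta_z\phi>0$ and you have not shown strict subharmonicity of $\phi$. Note also that knowing $G'(t)>0$, which is what the paper proves, gives $\phi'>0$ but says nothing about $G''$, hence nothing about the sign of $\Delta\phi=2dG'(|z|^2)+4|z|^2G''(|z|^2)$; so you cannot borrow the Laplacian's positivity from the paper either. The paper sidesteps this entirely by differentiating $G(t)=\iRd{\mathcal K(|x|)\log|x+\sqrt t\,\mathsf e|}$ directly: the resulting kernel $\ell(s)=\int_0^{\pi/2}\frac{1-s\cos(2\theta)}{1+s^2-2s\cos(2\theta)}(\sin\theta)^{d-2}d\theta$ \emph{is} positive, decreasing and tends to $0$, so the single-sign-change comparison applies and gives $G'(t)>0$ directly — but proving the monotonicity of $\ell$ is itself the technical heart of the argument (via $m_d(s)=-m_d(1/s)$, monotonicity in $d$, and an explicit $\operatorname{arctanh}$ computation for $d=3$), which your proposal skips. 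To repair your route you would need to establish the positivity of $\iRd{h\,|x+z|^{-2}}$ by some other means; as written the step is unjustified.
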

In other words, we prove that $\{w_\gamma\}$ converges to $w_0$ as $\gamma\downarrow0$. This means that, among all the solutions of problem~\eqref{prob: min} at $\gamma=0$, the sequence $\{w_{\gamma_n}\}$ \emph{selects the one centered at zero}. We shall proceed by means of a \emph{Selection Principle} argument, inspired again by~\cite{AB}.

\begin{proof} Let us define
\[
F(y):=\iRd{\(\tfrac{w_0^{2p}}{2\,p}-\tfrac{w_0^{p+1}}{p+1}\)\log|x+y|}\quad\forall\,y\in\R^d\,.
\]
We proceed as in the proof of Lemma~\ref{lem: bound-trans1} but replace $W_n$ with $W_n(\cdot-y)$, for any arbitrary $y\in\R^d$, and obtain that
\[
\mathsf D_{\gamma_n}[w_{\gamma_n}] \le \mathsf D_{\gamma_n}[W_n(\cdot-y)]\,.
\]
By passing to the limit as $n\to\infty$, which is feasible thanks to Corollary \ref{coro: convergenza-u-gamma}, we get that $F(\overline y)\le F(y)$. This proves that
\[
\overline y\in\underset{y\in\R^d}{\operatorname{argmin}}\,F\,.
\]

Next we may consider the function $\mathcal K$ such that
\[
\mathcal K(r)=\tfrac{w_0^{2p}(x)}{2\,p}-\tfrac{w_0^{p+1}(x)}{p+1}\,,
\]
with $r=|x|$ for any $x\in\R^d$. A computation based on the explicit profile~\eqref{eq: expl-minim} of $w_0$, together with~\eqref{eq: i-f}, shows that there exists $R>0$ such that
\[\label{eq: segno-H(r)}
\mathcal K(r)\ge0\quad\forall\,r\in[0,R]\,,\quad\mathcal K(r)< 0\quad\forall\,r>R\,,\quad\mbox{and}\quad\int_0^\infty\mathcal K(r)\,r^{d-1}\,dr>0\,.
\]
Let us choose $\mathsf e\in\mathbb S^{d-1}$, consider the angle $\theta\in[0,\pi]$ such that $\mathsf e\cdot\frac xr=\cos\theta$ and define the function $G\in C^1(\R^+)$ by
\[
G(t):=\iRd{\mathcal K(|x|)\,\log\big|\,x+\sqrt t\,\mathsf e\,\big|}\,.
\]
An elementary computation yields
\[
G'(t)=\frac{|\mathbb S^{d-2}|}t\int_0^\infty\mathcal K(r)\,\ell\(\frac{r^2}t\)\,r^{d-1}\,dr
\]
with
\[
\ell(s):=\int_0^{\pi/2}\frac{1-s\,\cos(2\theta)}{1+s^2-2\,s\,\cos(2\theta)}\,(\sin\theta)^{d-2}\,d\theta\quad\forall\,s\in\R^+\,.
\]
The function $\ell$ is continuous, monotone decreasing as we shall see next, and \hbox{$\lim_{s\to\infty}\ell(s)=0$}. As a consequence, we obtain that $\ell(s)>0$ for any $s\in\R^+$ and
\[
G'(t)\ge\frac{|\mathbb S^{d-2}|}t\,\ell\(\frac{R^2}t\)\int_0^\infty\mathcal K(r)\,r^{d-1}\,dr>0\,,
\]
which shows that the minimum of $G(t)$ is attained at $t=0$ and nowhere else, which is equivalent to proving the statement. To complete the proof, let us give some details concerning the above properties of the function $\ell$.

The continuity of $\ell$ is straightforward since a Taylor expansion around $(\theta,s)=(0,1)$ shows that
\[
\frac{1-s\,\cos(2\theta)}{1+s^2-2\,s\,\cos(2\theta)}\,(\sin\theta)^{d-2}\sim\frac{2\,\theta^2+1-s}{4\,\theta^2+(1-s)^2}\theta^{d-2}+\theta^{d-2}\,O\big(\theta^2+1-s\big)\,.
\]
To prove that $\ell'(s)<0$, we first note that
\begin{multline*}
\ell(s)=\frac12\,\int_0^{\pi/2}(\sin\theta)^{d-2}\,d\theta+\frac 12\,m_d(s)\\
\mbox{where}\quad m_d(s):=\int_0^{\pi/2}\frac{1-s^2}{(1+s)^2-\,4\,s\,(\cos\theta)^2}\,(\sin\theta)^{d-2}\,d\theta\,.
\end{multline*}
We then observe that:
\begin{enumerate}
\item For any $s>0$, $m_d(s)=-\,m_d(1/s)$, so that it is enough to prove that $m_d'(s)<0$ for any $s\in(0,1)$.
\item As a function of $d\ge3$ and for any given value of $s\in(0,1)$, it turns out that $d\mapsto m_d'(s)$ is non increasing. Hence, it is enough to prove that $m_3^\prime(s)<0$.
\item We can explicitly compute
\[
m_3(s)=\frac{1-s}{2\,\sqrt s}\,\operatorname{arctanh}\(\frac{2\,\sqrt s}{1+s}\)\quad\forall\,s\in(0,1)
\]
and check, by means of Taylor expansions, that in fact $m_3^\prime(s)<0$.
\end{enumerate}
This concludes the proof. More details can be found in~\cite[Proof of Lemma 3.3.8]{MuratoriThesis2015}.
\qed
\end{proof}

\section{Optimal functions are radial for \texorpdfstring{$\gamma$}{gamma} small}\label{sec: radiality}

This section is devoted to the proof of Theorem~\ref{thm: main-thm-barenblatt-gamma}. We start with two technical results.

\medskip First of all, the optimal functions for~\eqref{CKN} among \emph{radially symmetric} functions, that is, functions in $\mathcal H^{\star}_{p,\gamma}(\R^d)$, are based on Barenblatt-type profiles.
\begin{lem}\label{Lem: final-teo-barenblatt} Let $p$ and $d$ satisfy~\eqref{eq: parameters}. Then the solution to problem~\eqref{prob: min} restricted to $\mathcal H^{\star}_{p,\gamma}(\R^d)$ is unique and explicit. If the mass $M$ is chosen so that it satisfies the Euler-Lagrange equation~\eqref{prob: Eul-Lag-one}, it coincides with $w_\gamma^\star$ as in~\eqref{eq: barenblatt-weight-intro}.\end{lem}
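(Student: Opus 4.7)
The plan is to reduce the radial optimization to a uniqueness result for positive decaying ground states of a standard (non-weighted) semilinear ODE through an Emden-Fowler-type change of radial variable, and then to invoke the uniqueness theorem of~\cite{MR1647924}. Existence of a radial minimizer of~\eqref{prob: min} restricted to $\mathcal H^\star_{p,\gamma}(\R^d)$ is obtained by running the proof of Proposition~\ref{eq: teo-exist} on the weakly closed subspace of radial functions: coercivity, weak lower semicontinuity of $\RF_\gamma$ and the compact embedding into $\mathrm L^{2p}_\gamma(\R^d)$ from Proposition~\ref{prop: compactness} all restrict without change. With the mass normalization~\eqref{M-I} the Lagrange multiplier equals $1$, so any such radial minimizer $w\ge0$ solves~\eqref{prob: Eul-Lag-one} in the distributional sense, and it is classical away from the origin by Lemma~\ref{lem-rego-calpha}.

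Writing $w(x)=u(|x|)$ and introducing
\[
s:=\tfrac{2}{2-\gamma}\,r^{(2-\gamma)/2}\,,\qquad v(s):=u(r)\,,
\]
one obtains, after multiplying the radial version of~\eqref{prob: Eul-Lag-one} by $r^\gamma$ and carrying out the change of variable, the standard ODE
\[
-\,v''(s)-\frac{N-1}{s}\,v'(s)+v(s)^p-v(s)^{2p-1}=0\,,\qquad v'(0)=0\,,\ \lim_{s\to\infty}v(s)=0\,,
\]
with \emph{effective dimension}
\[
N:=\frac{2\,(d-\gamma)}{2-\gamma}>2\,.
\]
This is the radial profile equation of a semilinear elliptic problem in (possibly fractional) dimension $N$ with nonlinearity $f(v):=v^p-v^{2p-1}$, which vanishes at $0$ and at $1$, is positive on $(0,1)$ and negative on $(1,\infty)$. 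A short computation shows that the subcriticality condition in~\eqref{eq: parameters} is exactly $p<N/(N-2)=2^\ast_\gamma/2$.

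At this point the Serrin-Tang framework of~\cite{MR1647924} applies in the effective dimension $N$ (the results of~\cite{MR1647924} are stated for general real $N>2$ and cover precisely this class of nonlinearities), yielding uniqueness of the positive decaying radial solution, the translation invariance in $s$ being broken by the condition $v'(0)=0$. Undoing the change of variable gives uniqueness of $w$. Since $w_\gamma^\star$ defined in~\eqref{eq: barenblatt-weight-intro} is readily verified by direct substitution to be a positive radial decaying solution of~\eqref{prob: Eul-Lag-one}, uniqueness forces $w=w_\gamma^\star$. The main obstacle in this plan is the careful verification that the specific nonlinearity $f(v)=v^p-v^{2p-1}$ satisfies the Pucci-Serrin-type structural hypothesis of~\cite{MR1647924} in the non-integer dimension $N$; however, this reduces exactly to the subcritical range $p\in\bigl(1,2^\ast_\gamma/2\bigr)$ already assumed in~\eqref{eq: parameters}, so the applicability is indeed guaranteed throughout our parameter range.
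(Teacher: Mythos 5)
Your argument follows essentially the same route as the paper: the Emden--Fowler change of variables $s\propto r^{(2-\gamma)/2}$ converts the radial Euler--Lagrange equation into a weight-free ODE in the effective dimension $d_\gamma=2(d-\gamma)/(2-\gamma)$, and uniqueness is then obtained from~\cite[Theorem~2]{MR1647924}, noting that $w_\gamma^\star$ is an explicit solution. The only point the paper is more careful about is the regularity of the transformed profile $v$ at $s=0$ (needed to apply the Pucci--Serrin theorem); $w_\gamma^\star$ itself is only $C^{1,1-\gamma}$ or $C^{0,2-\gamma}$ at the origin depending on $\gamma$ (cf.~Remark~\ref{rem: regularity}), and the paper invokes~\cite[Section~6, Remark~3]{MR1647924} to conclude that $v$ is nevertheless $C^1$ at $s=0$, a step you assert implicitly via $v'(0)=0$ without justification.
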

\begin{proof} Since every solution to problem~\eqref{prob: min} restricted to $\mathcal H^{\star}_{p,\gamma}(\R^d)$ is a solution to~\eqref{prob: Eul-Lag-one} after a suitable rescaling, it is enough to establish uniqueness for nonnegative, nontrivial solutions to~\eqref{prob: Eul-Lag-one} belonging to $\mathcal H^{\star}_{p,\gamma}(\R^d)$. Hereafter we shall denote any such solution as $w_\gamma^\star$ and, with a slight abuse of notation, write $w_\gamma^\star(|x|)=w_\gamma^\star(x)$ for any $x\in\R^d$. If we perform the change of variables
\[\label{eq: ch-var}
v(s)=w_\gamma^\star\big(c\,s^{2/(2-\gamma)}\big)\quad\forall\,s\in\R^+
\]
where $c:=\left[(2-\gamma)/2\right]^{2/{(2-\gamma)}}$, then $v$ solves
\[\label{eq: ch-var-diff-eq}
-v^{\prime\prime}-\frac{d_\gamma-1}{s}\,v^{\prime}+v^p=v^{2p-1}\quad\mbox{in}\quad\R^+\,,
\]
where $d_\gamma:=2\,(d-\gamma)/(2-\gamma)$. From the $\mathrm L^\infty$ bound found in Lemma~\ref{stime-Lq-alte} and from the Calder\'on-Zygmund theory (see the proof of Lemma~\ref{lem-rego-calpha}), we easily get that $w_\gamma^\star\in\mathrm L^\infty(\R^d)\cap C^1(\R^d\setminus\{0\})$. The function $v$ is actually of class~$C^1$: see~\cite[Section 6, Remark 3]{MR1647924} for a proof. Uniqueness then follows from~\cite[Theorem~2]{MR1647924}, so that $w_\gamma^\star$ does coincide with the Barenblatt-type profile defined by~\eqref{eq: barenblatt-weight-intro}.

We may notice that the above change of variables amounts to rewrite the radial problem in a ``dimension'' $d_\gamma$, which is not necessarily an integer, without weight, and therefore reduces the uniqueness issue to a problem that has already been considered in~\cite{DD}.
\qed
\end{proof}

The second lemma is a spectral gap property, which is a consequence of various results that can be found in~\cite{DenzMc-pnas,DenzMc05,BBDGV-CRAS,BBDGV,BDGV-pnas}. We recall that, according to the conventions of the introduction and the results of \cite{DD}, we have that $w_0=w_0^\star$.
\begin{lem}\label{lem: hardy-poinc} For any function $\omega\in\dot{\mathrm H}^1(\R^d)$ such that
\be{eq: func-HP-zeromean}
\iRd{w_0^{2p-1}\,\omega}=0\,,
\ee
the inequality
\be{eq: func-HP}
\iRd{|\nabla\omega|^2}+p\iRd{w_0^{p-1}\,\omega^2}\ge(2\,p-1)\iRd{w_0^{2(p-1)}\,\omega^2}
\ee
holds with equality if and only if $\omega=a\cdot\nabla w_0$ for some $a\in\R^d$.\end{lem}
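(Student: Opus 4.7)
\medskip\noindent\textbf{Strategy.} Inequality \eqref{eq: func-HP} is the second-order expansion of $\mathcal E_0$ at the minimizer $w_0$. The plan is to prove that the self-adjoint operator
\[
L\omega := -\Delta\omega + p\,w_0^{p-1}\,\omega - (2p-1)\,w_0^{2(p-1)}\,\omega
\]
is nonnegative on the codimension-one subspace singled out by \eqref{eq: func-HP-zeromean}, with kernel spanned exactly by $\partial_1 w_0,\ldots,\partial_d w_0$. The equality case comes for free: differentiating the Euler–Lagrange equation \eqref{prob: Eul-Lag-one} at $\gamma=0$ in the variable $x_i$ yields $L(\partial_i w_0)=0$, while integration by parts gives $\int_{\R^d} w_0^{2p-1}\,\partial_i w_0\,dx = \tfrac{1}{2p}\int_{\R^d}\partial_i(w_0^{2p})\,dx = 0$; thus each $\omega = a\cdot\nabla w_0$ is admissible and saturates \eqref{eq: func-HP}.

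For the nonnegativity I would reduce \eqref{eq: func-HP} to a weighted Hardy–Poincar\'e inequality in the framework of the fast diffusion flow. The key observation is that $v = w_0^{2p}$ is, up to scaling, the Barenblatt profile $\mathfrak{B}$ of \eqref{Eqn:FD-FP} with $\gamma = 0$ and $m = (p+1)/(2p)\in(\tfrac12,1)$. Substituting $\omega = w_0^{\sigma}\,h$ with an appropriate exponent $\sigma$, chosen so that the cross term coming from $-\int h^2\,w_0^{\sigma-1}\Delta w_0^\sigma$ recombines cleanly with the zeroth-order potentials $p\,w_0^{p-1}-(2p-1)w_0^{2(p-1)}$, the quadratic form $\langle L\omega,\omega\rangle$ becomes a weighted Dirichlet form in $h$ whose measures are explicit powers of $\mathfrak B$. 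This is precisely the setting in which the spectral decomposition of the associated Ornstein–Uhlenbeck-type operator has been worked out in \cite{BBDGV,DenzMc-pnas,DenzMc05,BBDGV-CRAS,BDGV-pnas}.

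Once $L$ is realized as such a weighted Schr\"odinger operator, I decompose $h$ in spherical harmonics, which diagonalizes it into a family of one-dimensional operators $L_k$, $k\ge 0$. The spectral table from the quoted references then gives: on the radial sector ($k=0$) the only potential zero mode is ruled out by the orthogonality \eqref{eq: func-HP-zeromean} (which corresponds exactly to the mass-conservation condition in the fast diffusion picture), and the spectrum of $L_0$ is otherwise bounded below by a positive constant; on the first-harmonic sector ($k=1$) the kernel of $L_1$ is one-dimensional per coordinate, spanned by the translation modes $\partial_i w_0$, with a strictly positive gap above; on higher harmonics ($k\ge 2$) the spectrum is strictly positive. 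Combining these three facts yields \eqref{eq: func-HP} together with the claimed equality characterization. The main technical obstacle is bookkeeping: selecting $\sigma$ correctly, checking that \eqref{eq: func-HP-zeromean} matches the orthogonality condition used in the cited Hardy–Poincar\'e inequalities (typically stated against $\mathfrak B\,dx$), and verifying that the functional class $\dot{\mathrm H}^1(\R^d)$ for $\omega$ corresponds, after the substitution, to the admissible class for the weighted operator in the quoted references.
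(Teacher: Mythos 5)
Your proposal follows essentially the same route as the paper: the inequality comes from the second-order expansion of $\mathcal E_0$ at the minimizer, and the equality case is treated by a power substitution $\omega = w_0^\sigma h$ (the paper uses $\sigma=p$, writing $\omega=f\,w_0^p$) which, combined with the explicit relation $|\nabla w_0|^2/w_0 = \tfrac{4}{(p-1)^2}\big(\tfrac1{a_0}w_0^p-\tfrac{b_0}{a_0^2}w_0^{2p-1}\big)$ and the Euler--Lagrange equation, recasts \eqref{eq: func-HP} as the weighted Hardy--Poincar\'e inequality $\int|\nabla f|^2 w_0^{2p}\ge \tfrac{2p(p-1)}{d-p(d-2)}\int f^2 w_0^{3p-1}$ under $\int f\,w_0^{3p-1}=0$, for which the equality modes $f(x)=x\cdot a$ are quoted from \cite{BDGV-pnas} (see also \cite{DenzMc-pnas,DenzMc05}). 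Your added remarks — the fast-diffusion identification $w_0^{2p}\sim\mathfrak B$ with $m=(p+1)/(2p)$, and the spherical-harmonics diagonalization — merely unpack the content of those same references rather than replace the paper's argument, so the proposal is correct and substantially identical; the only loose end is that $\sigma$ is left unspecified, but the clean choice you anticipate is indeed $\sigma=p$.
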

\begin{proof} A second order Taylor expansion of $\mathcal E_0[w_0+\varepsilon\,\omega]$ defined by~\eqref{eq: E-funz-energy} in terms of $\varepsilon$ and the fact that $w_0$ is a minimizer of~$\mathcal E_0$ establishes the inequality.

To identify the equality case, we introduce $f$ such that $\omega=f\,w_0^p$. Since $\nabla\omega=w_0^p\,\nabla f+f\,\nabla(w_0^p)$, we obtain after integrating by parts that
\begin{multline*}
\iRd{|\nabla\omega|^2}-\iRd{|\nabla f|^2\,w_0^{2p}}=-\iRd{f^2\,w_0^p\,\Delta(w_0^p)}\\
=-\,p\iRd{f^2\,w_0^{2p-1}\,\(\Delta w_0+(p-1)\,\frac{|\nabla w_0|^2}{w_0}\)}\,.
\end{multline*}
Since $w_0$ solves $-\,\Delta w_0+w_0^p-w_0^{2p-1}=0$ and, using the notations of Section~\ref{Sec: Scalings and EL},
\[
\frac{|\nabla w_0|^2}{w_0}=\frac4{(p-1)^2}\(\frac1{a_0}\,w_0^p-\frac{b_0}{a_0^2}\,w_0^{2p-1}\),
\]
inequality~\eqref{eq: func-HP} can be rewritten in terms of $f$ as the Hardy-Poincar\'e inequality
\be{eq: e5}
\iRd{|\nabla f|^2\,w_0^{2p}}\ge\tfrac{2\,p\,(p-1)}{d-p\,(d-2)}\iRd{|f|^2\,w_0^{3p-1}}\,,
\ee
while condition~\eqref{eq: func-HP-zeromean} amounts to $\iRd{f\,w_0^{3p-1}}=0$. According to~\cite[see pp.~16462--16463]{BDGV-pnas} (we also refer to~\cite{DenzMc-pnas,DenzMc05} for earlier results, but in a different functional setting), the equality case corresponds to $f(x)=x\cdot a$ for some $a\in\R^d$. We point out that such an $f$ belongs to the closure of $\mathcal{D}(\R^d)$ with respect to the (square) norm identified by the l.h.s.~of~\eqref{eq: e5}. The proof is completed by noting that $\nabla w_0$ is proportional to $w_0^p\,x$.
\qed
\end{proof}

\noindent{\bf\emph{Proof of Theorem~\ref{thm: main-thm-barenblatt-gamma}.}}

We argue by contradiction using the \emph{angular derivatives} of possibly non-radial optimal functions. Given a nontrivial antisymmetric matrix $\mathsf A$ and a differentiable function $f$, we define the angular derivative of $f$ with respect to~$\mathsf A$ by
\[\label{eq: def-deriv-ang}
\nabla_{\kern-2pt\mathsf A}f(x):= \mathsf A\,x \cdot \nabla f(x)=\frac d{dt}f\(e^{t\kern0.5pt\mathsf A}\,x\)_{|t=0}\quad\forall\,x\in\R^d\,.
\]
It turns out that a function $f$ is radial if and only if $\nabla_{\kern-2pt\mathsf A}f \equiv 0$ for any antisymmetric matrix $\mathsf A$. Assume by contradiction that $ w_{\gamma_n}$ is non radial for some sequence $\{\gamma_n\}$ with $\gamma_n\downarrow0$, \emph{i.e.},~there exists an antisymmetric matrix $\mathsf A_n$ such that
\[\label{eq: deriv-ang-normalized}
\omega_n:=\nabla_{\kern-2pt\mathsf A_n}w_{\gamma_n}\not\equiv0\quad\mbox{with}\quad\iRd{\omega_n^2\,w_{\gamma_n}^{2(p-1)}\,|x|^{-\gamma_n}}=1\,.
\]
We divide the proof in three steps.

\medskip\noindent\textbf{First step.} \emph{For any $n\in\N$ large enough, $\omega_n$ belongs to $\dot{\mathrm H}^1(\R^d)$ and satisfies}
\be{eq: laplaciano-deriv-ang}
-\,\Delta{\omega_n}+p\,\frac{w_{\gamma_n}^{p-1}}{|x|^{\gamma_n}}\,\omega_n=(2\,p-1)\,\frac{w_{\gamma_n}^{2(p-1)}}{|x|^{\gamma_n}}\,\omega_n\quad\mbox{in}\quad\R^d\,.
\ee
The validity of~\eqref{eq: laplaciano-deriv-ang} can be proved just by plugging $\varphi_n(t,x)=\varphi\(e^{-t\kern0.5pt\mathsf A_n}\,x\)$ as a test function in the weak formulation of~\eqref{prob: Eul-Lag-one}, where $\varphi\in\mathcal{D}(\R^d)$, and change variables to get
\[\label{eq: laplaciano-deriv-ang-prova-1}
-\iRd{w_{\gamma_n}\!\(e^{t\kern0.5pt\mathsf A_n}\,x\)\Delta\varphi}+\iRd{\kern-2pt\tfrac{\(w_{\gamma_n}\(e^{t\kern0.5pt\mathsf A_n}\,x\)\)^p}{|x|^{\gamma_n}}\,\varphi}=\iRd{\kern-2pt\tfrac{\(w_{\gamma_n}\(e^{t\kern0.5pt\mathsf A_n}\,x\)\)^{2p-1}}{|x|^{\gamma_n}}\,\varphi}\,.
\]
Taking the derivative w.r.t.~$t$ at $t=0$ proves the identity. Note that, from the proof of Lemma \ref{lem-rego-calpha}, for $n$ large enough $\omega_n\in\mathrm H^1_{\mathrm{loc}}(\R^d)$ and~\eqref{eq: laplaciano-deriv-ang} holds in the $\mathrm H^1$ weak sense. We can now draw some consequences.

Let us multiply~\eqref{eq: laplaciano-deriv-ang} by the test function $\varphi=\xi_R\,\omega_n$ and integrate by parts, where $\xi_R(x):=\xi(x/R)$ and $\xi$ is a smooth cut-off function:
\begin{multline}\label{eq: energia-deriv-ang-prova-1}
\iRd{\xi_R\left|\nabla{\omega_n}\right|^2}-\frac12\iRd{\Delta\xi_R\,\omega_n^2}+p\iRd{\xi_R\,\omega_n^2\,\frac{w_{\gamma_n}^{p-1}}{|x|^{\gamma_n}}}\\
=(2\,p-1)\iRd{\xi_R\,\omega_n^2\,\frac{w_{\gamma_n}^{2(p-1)}}{|x|^{\gamma_n}}}\,.
\end{multline}
Since $\{w_{\gamma_n}\}$ converges up to the extraction of subsequences in $\dot{\mathrm H}^1(\R^d)$, by its definition we have that $\{\omega_n\}$ is also relatively compact in $\mathrm L^2_2(\R^d)$ and with the estimate
\[\label{eq: energia-deriv-ang-prova-2}
\left|\iRd{\Delta{\xi_R}\,\omega_n^2}\right|\le\frac{\|\Delta\xi\|_\infty}{R^2}\int_{B_{2R}\setminus B_R}\omega_n^2\,dx\le4\,\|\Delta\xi\|_\infty\int_{B_{2R}\setminus B_R}\frac{\omega_n^2}{|x|^2}\,dx\,,
\]
we obtain that $\lim_{R\to\infty}\left|\iRd{\Delta{\xi_R}\,\omega_n^2}\right|=0$. Since $\iRd{w_{\gamma_n}^{2(p-1)}\,|x|^{-\gamma_n}\,\omega_n^2}$ is uniformly bounded by Corollary~\ref{coro: convergenza-u-gamma}, we can then pass the limit as $R\to\infty$ in~\eqref{eq: energia-deriv-ang-prova-1} and obtain
\be{eq: energia-deriv-ang}
\iRd{\left|\nabla{\omega_n}\right|^2}+p\iRd{\frac{w_{\gamma_n}^{p-1}}{|x|^{\gamma_n}}\,\omega_n^2}=(2\,p-1)\iRd{\frac{w_{\gamma_n}^{2(p-1)}}{|x|^{\gamma_n}}\,\omega_n^2}\,.
\ee

For $n$ large enough, we still have to show that $\omega_n\in\dot{\mathrm H}^1(\R^d)$, namely that there exists a sequence $\{\varphi_k\}\subset\mathcal{D}(\R^d)$ such that \mbox{$\lim_{k\to\infty}\nrm{\nabla{\omega_n}-\nabla{\varphi_k}}2=0$}. Actually, this is a direct consequence of the fact that $|\nabla \omega_n| \in \mathrm L^2(\R^d)$ and $\omega_n\in \mathrm L^2_2(\R^d)$.

Using once again a cut-off argument, we find that
\[\label{eq: medianulla-deriv-ang-prova-2}
\left|\iRd{\xi_R\,\nabla_{\mathsf A}\!\(\frac{w_{\gamma_n}^{2p}}{|x|^{\gamma_n}}\)}\right| \!=\! \left|\iRd{\(\nabla_{\mathsf A}\,\xi_R\) \frac{w_{\gamma_n}^{2p}}{|x|^{\gamma_n}}}\right| \! \le\| \nabla_{\mathsf A} \xi\|_\infty\int_{B_{2R}\setminus B_R}\frac{w_{\gamma_n}^{2p}}{|x|^{\gamma_n}}\,dx\,.
\]
Because of the Cauchy-Schwarz estimate
\[\label{eq: medianulla-deriv-ang-prova-3}
\(\iRd{\frac{w_{\gamma_n}^{2p-1}\,|\omega_n|}{|x|^{\gamma_n}}}\)^2\le\iRd{\frac{w_{\gamma_n}^{2(p-1)}}{|x|^{\gamma_n}}\,\omega_n^2}\iRd{\frac{w_{\gamma_n}^{2p}}{|x|^{\gamma_n}}}\,,
\]
we may let $R\to\infty$ and obtain
\be{eq: medianulla-deriv-ang}
\iRd{\nabla_{\mathsf A}\!\(w_{\gamma_n}^{2p}\,|x|^{-{\gamma_n}}\)}=2\,p\iRd{\omega_n\,w_{\gamma_n}^{2p-1}\,|x|^{-\gamma_n}}=0\,.
\ee

\medskip\noindent\textbf{Second step.} \emph{As $n\to\infty$, up to the extraction of a subsequen\-ce, $\{\omega_n\}$ converges weakly in $\dot{\mathrm H}^1(\R^d)$ to a nontrivial function $\omega$ such that}
\be{eq: func-HP-limite}
\iRd{\left|\nabla{\omega}\right|^2 dx+p\int_{\R^d}w_0^{p-1}\,\omega^2}\le(2\,p-1)\iRd{w_0^{2(p-1)}\,\omega^2}\,,
\ee
\be{eq: func-HP-limite-constraints}
\iRd{w_0^{2p-1}\,\omega}=0\quad\mbox{and}\quad\iRd{\frac{x_i}{|x|^2}\(w_0^p-w_0^{2p-1}\)\omega}=0\quad\forall\,i=1,\,2,\ldots d\,.
\ee
Recall that $\mathsf A_n$ has been normalized by \hbox{$\iRd{\omega_n^2\,w_{\gamma_n}^{2(p-1)}\,|x|^{-\gamma_n}}=1$}. According to~\eqref{eq: energia-deriv-ang}, the sequence $\{\omega_n\}$ is bounded in $\dot{\mathrm H}^1(\R^d)$ and, up to subsequences, converges in $\dot{\mathrm H}^1(\R^d)$ and pointwise to some function $\omega$. By the Sobolev inequality, we infer that $\{\omega_n^2\}$ converges weakly in $\mathrm L^{d/(d-2)}(\R^d)$ to $\omega^2$, up to subsequences. The sequence $\{w_{\gamma_n}^{2(p-1)}|x|^{-\gamma_n}\}$ converges strongly in $\mathrm L^{d/2}(\R^d)$ to $w_0^{2(p-1)}$ as a consequence of Corollary~\ref{coro: convergenza-u-gamma}, Proposition~\ref{lem: minimizer} and of the dominated convergence theorem. We therefore deduce that
\[\label{eq: energia-deriv-ang-massa}
1=\lim_{n\to\infty}\iRd{\frac{w_{\gamma_n}^{2(p-1)}}{|x|^{\gamma_n}}\,\omega_n^2}=\iRd{w_0^{2(p-1)}\,\omega^2}\,.
\]
This also proves~\eqref{eq: func-HP-limite} by weak lower semi-continuity of the r.h.s.

We then observe that the sequences $\{w_{\gamma_n}^{p-1}\,|x|^{-\gamma_n/2}\,\omega_n \}$ and $\{w_{\gamma_n}^p\,|x|^{-\gamma_n/2}\}$ converge strongly in $\mathrm L^2(\R^d)$ to $w_0^{p-1}\,\omega$ and $w_0^p$, respectively. This allows to pass to the limit in~\eqref{eq: medianulla-deriv-ang} and proves that $\iRd{w_0^{2p-1}\,\omega}=0$.

Let us take $\mathsf e \in\mathbb S^{d-1}$ and consider the \emph{directional derivative} \hbox{$v_n:=\mathsf e\cdot\nabla w_{\gamma_n}$}. Proceeding similarly to the proof of~\eqref{eq: laplaciano-deriv-ang}, we get that $v_n\in\dot{\mathrm H}^1(\R^d)$ solves exactly the same equation as $\omega_n$ except for a forcing term arising from the directional derivative of the weight $|x|^{-\gamma_n}$:
\[\label{eq: laplaciano-deriv-direz}
-\,\Delta{v_n}+p\,\frac{w_{\gamma_n}^{p-1}}{|x|^{\gamma_n}}\,v_n=(2\,p-1)\,\frac{w_{\gamma_n}^{2(p-1)}}{|x|^{\gamma_n}}\,v_n+\gamma_n\,\frac{x\cdot \mathsf e}{|x|^{\gamma_n+2}}\(w_{\gamma_n}^p-w_{\gamma_n}^{2p-1}\)
\]
in $\R^d$. Using $\omega_n$ as a test function and subtracting the identity obtained by taking $v_n$ as a test function in ~\eqref{eq: laplaciano-deriv-ang}, we end up with the identity
\[\label{eq: energia-deriv-ang-crucial}
\iRd{\frac{x\cdot \mathsf e}{|x|^{\gamma_n+2}}\(w_{\gamma_n}^p-w_{\gamma_n}^{2p-1}\)\omega_n}=0\,.
\]
Since $\{\omega_n\}$ converges to $\omega$ weakly in $\mathrm L^2_2(\R^d)$ and $\{x\cdot \mathsf e\,{|x|^{-\gamma_n}}\big(w_{\gamma_n}^p-w_{\gamma_n}^{2p-1}\big)\}$ converges to $x\cdot \mathsf e\,(w_0^p-w_0^{2p-1})$ strongly in $\mathrm L^2_2(\R^d)$, we may pass to the limit as $n\to\infty$. This proves that $\iRd{\frac{x\cdot \mathsf e}{|x|^2}\,(w_0^p-w_0^{2p-1})\,\omega}=0$ for any $\mathsf e \in \mathbb S^{d-1}$ and concludes the proof of~\eqref{eq: func-HP-limite-constraints}.

\medskip\noindent\textbf{Third step.} \emph{Conclusion of the proof: getting the contradiction.}

From Lemma~\ref{lem: hardy-poinc},~\eqref{eq: func-HP-limite} and the left-hand identity in~\eqref{eq: func-HP-limite-constraints}, we deduce that we have equality in~\eqref{eq: func-HP} if~$\omega$ is the limit as $n\to\infty$ of $\{\omega_n\}$. Hence $\omega=a\cdot\nabla w_0$ for some $a=(a_j)_{j=1}^d\in\R^d\setminus\{0\}$. From the right-hand identity in~\eqref{eq: func-HP-limite-constraints}, we infer that
\[\label{eq: func-HP-limite-ortogonal-w-1}
\sum_{j=1}^da_j\iRd{\tfrac{x_i}{|x|^2}\(w_0^p-w_0^{2p-1}\)(w_0)_{x_j}}=0\quad\forall\,i=1,\,2,\ldots d\,,
\]
that is
\[\label{eq: func-HP-limite-ortogonal-w-2}
\sum_{j=1}^da_j\iRd{\tfrac{x_i}{|x|^2}\(\tfrac{w_0^{p+1}}{p+1}-\tfrac{w_0^{2p}}{2\,p}\)_{x_j}}=0\quad\forall\,i=1,\,2,\ldots d\,.
\]
Integrating by parts, we obtain:
\be{eq: func-HP-limite-ortogonal-w-3}
\sum_{j=1}^da_j\iRd{\(\tfrac{\delta_{ij}}{|x|^2}-2\,\tfrac{x_i\,x_j}{|x|^4}\)\(\tfrac{w_0^{2p}}{2\,p}-\tfrac{w_0^{p+1}}{p+1}\)}=0\quad\forall\,i=1,\,2,\ldots d\,.
\ee
Since $w_0$ is radial, these integrals are identically zero for all $j\neq i$, so that \eqref{eq: func-HP-limite-ortogonal-w-3} reduces to
\[\label{eq: func-HP-limite-ortogonal-w-4}
 \sum_{i=1}^d a_i\iRd{\(\tfrac1{|x|^2}-2\,\tfrac{x_i^2}{|x|^4}\)\(\tfrac{w_0^{2p}}{2\,p}-\tfrac{w_0^{p+1}}{p+1}\)}=0\quad\forall\,i=1,\,2,\ldots d\,.
\]
The radiality of $w_0$ also implies that the integral is independent of $i$:
\[\label{eq: func-HP-limite-ortogonal-w-5}
\begin{aligned}
&\iRd{\(\tfrac1{|x|^2}-2\,\tfrac{x_i^2}{|x|^4}\)\(\tfrac{w_0^{2p}}{2\,p}-\tfrac{w_0^{p+1}}{p+1}\)}\
=&\tfrac{d-2}d\iRd{\tfrac1{|x|^2}\(\tfrac{w_0^{2p}}{2\,p}-\tfrac{w_0^{p+1}}{p+1}\)}\,.
\end{aligned}
\]
Since $a\not=0$, we deduce that $\iRd{\tfrac1{|x|^2}\,\big(\tfrac{w_0^{2p}}{2\,p}-\tfrac{w_0^{p+1}}{p+1}\big)}=0$. By arguing as in the proof of Proposition~\ref{lem: minimizer}, with $\ell(r)$ replaced by $1/r^2$, we obtain that the integral is positive, a contradiction.
\qed

\begin{acknowledgements}
J.D.~has been supported by the ANR projects \emph{NoNAP}, \emph{STAB} and \emph{Kibord}. B.N.~has been supported by the ANR project \emph{STAB}. M.M.~thanks Alessandro Zilio for a very helpful discussion concerning the regularity theory for the Euler-Lagrange equation. M.M.~has been partially funded by the ``Universit\`a Italo-Francese / Universit\'e Franco-Italienne'' (Bando Vinci 2013). The authors thank Guillaume Carlier for pointing them Ref.~\cite{AB}. They thank the referees for their careful reading which helped them to improve the manuscript.

\smallskip\noindent {\sl\small\copyright~2016 by the authors. This paper may be reproduced, in its entirety, for non-commercial purposes.}
\end{acknowledgements}

\begin{thebibliography}{10}
\providecommand{\url}[1]{{#1}}
\providecommand{\urlprefix}{URL }
\expandafter\ifx\csname urlstyle\endcsname\relax
  \providecommand{\doi}[1]{DOI~\discretionary{}{}{}#1}\else
  \providecommand{\doi}{DOI~\discretionary{}{}{}\begingroup
  \urlstyle{rm}\Url}\fi

\bibitem{AB}
Anzellotti, G., Baldo, S.: Asymptotic development by {$\Gamma$}-convergence.
\newblock Appl. Math. Optim. \textbf{27}, 105--123 (1993)

\bibitem{Aub}
Aubin, T.: Probl\`emes isop\'erim\'etriques et espaces de {S}obolev.
\newblock J. Differential Geometry \textbf{11}, 573--598 (1976)

\bibitem{MR3155209}
Bakry, D., Gentil, I., Ledoux, M.: Analysis and geometry of {M}arkov diffusion
  operators, \emph{Grundlehren der Mathematischen Wissenschaften [Fundamental
  Principles of Mathematical Sciences]}, vol. 348.
\newblock Springer, Cham (2014)

\bibitem{MR1734159}
Betta, M.F., Brock, F., Mercaldo, A., Posteraro, M.R.: A weighted isoperimetric
  inequality and applications to symmetrization.
\newblock J. Inequal. Appl. \textbf{4}(3), 215--240 (1999)

\bibitem{BBDGV-CRAS}
Blanchet, A., Bonforte, M., Dolbeault, J., Grillo, G., V\'azquez, J.L.:
  {H}ardy-{P}oincar{\'e} inequalities and applications to nonlinear diffusions.
\newblock Comptes Rendus Math{\'e}\-ma\-tique \textbf{344}, 431--436 (2007)

\bibitem{BBDGV}
Blanchet, A., Bonforte, M., Dolbeault, J., Grillo, G., V\'azquez, J.L.:
  Asymptotics of the fast diffusion equation via entropy estimates.
\newblock Archive for Rational Mechanics and Analysis \textbf{191}, 347--385
  (2009)

\bibitem{BDGV-pnas}
Bonforte, M., Dolbeault, J., Grillo, G., V\'azquez, J.L.: {Sharp rates of decay
  of solutions to the nonlinear fast diffusion equation via functional
  inequalities}.
\newblock Proc. Natl. Acad. Sci. USA \textbf{107}, 16,459--16,464 (2010)

\bibitem{2016arXiv160208319B}
{Bonforte}, M., {Dolbeault}, J., {Muratori}, M., {Nazaret}, B.: {Weighted fast
  diffusion equations (Part I): Sharp asymptotic rates without symmetry and
  symmetry breaking in Caffarelli-Kohn-Nirenberg inequalities}.
\newblock {T}o appear in Kinet. Relat. Models  (2016).
\newblock Preprint
  hal-\href{https://hal.archives-ouvertes.fr/hal-01279326}{01279326} \&
  ar{X}iv: \href{http://arxiv.org/abs/1602.08319}{1602.08319}

\bibitem{2016arXiv160208315B}
{Bonforte}, M., {Dolbeault}, J., {Muratori}, M., {Nazaret}, B.: {Weighted fast
  diffusion equations (Part II): Sharp asymptotic rates of convergence in
  relative error by entropy methods}.
\newblock {T}o appear in Kinet. Relat. Models  (2016).
\newblock Preprint
  hal-\href{https://hal.archives-ouvertes.fr/hal-01279327}{01279327} \&
  ar{X}iv: \href{http://arxiv.org/abs/1602.08315}{1602.08315}

\bibitem{Caffarelli-Kohn-Nirenberg-84}
Caffarelli, L., Kohn, R., Nirenberg, L.: First order interpolation inequalities
  with weights.
\newblock Compositio Math. \textbf{53}, 259--275 (1984)

\bibitem{Catrina-Wang-01}
Catrina, F., Wang, Z.Q.: On the {C}affarelli-{K}ohn-{N}irenberg inequalities:
  sharp constants, existence (and nonexistence), and symmetry of extremal
  functions.
\newblock Comm. Pure Appl. Math. \textbf{54}, 229--258 (2001)

\bibitem{Chou-Chu-93}
Chou, K.S., Chu, C.W.: On the best constant for a weighted {S}obolev-{H}ardy
  inequality.
\newblock J. London Math. Soc. \textbf{48}, 137--151 (1993)

\bibitem{DD}
Del~Pino, M., Dolbeault, J.: Best constants for {G}agliardo-{N}irenberg
  inequalities and applications to nonlinear diffusions.
\newblock J. Math. Pures Appl. \textbf{81}, 847--875 (2002)

\bibitem{DDFT}
Del~Pino, M., Dolbeault, J., Filippas, S., Tertikas, A.: A logarithmic {H}ardy
  inequality.
\newblock J. Funct. Anal. \textbf{259}, 2045--2072 (2010)

\bibitem{DenzMc-pnas}
Denzler, J., McCann, R.J.: Phase transitions and symmetry breaking in singular
  diffusion.
\newblock Proc. Natl. Acad. Sci. USA \textbf{100}, 6922--6925 (2003)

\bibitem{DenzMc05}
Denzler, J., McCann, R.J.: Fast diffusion to self-similarity: complete
  spectrum, long-time asymptotics, and numerology.
\newblock Arch. Ration. Mech. Anal. \textbf{175}, 301--342 (2005)

\bibitem{DETT}
Dolbeault, J., Esteban, M., Tarantello, G., Tertikas, A.: Radial symmetry and
  symmetry breaking for some interpolation inequalities.
\newblock Calc. Var. Partial Differential Equations \textbf{42}, 461--485
  (2011)

\bibitem{Oslo}
Dolbeault, J., Esteban, M.J.: About existence, symmetry and symmetry breaking
  for extremal functions of some interpolation functional inequalities.
\newblock In: H.~Holden, K.H. Karlsen (eds.) Nonlinear Partial Differential
  Equations, \emph{Abel Symposia}, vol.~7, pp. 117--130. Springer Berlin
  Heidelberg (2012)

\bibitem{1005}
Dolbeault, J., Esteban, M.J.: Extremal functions for
  {C}affarelli-{K}ohn-{N}irenberg and logarithmic {H}ardy inequalities.
\newblock Proc. Roy. Soc. Edinburgh Sect. A \textbf{142}, 745--767 (2012)

\bibitem{FreefemDolbeaultEsteban}
Dolbeault, J., Esteban, M.J.: A scenario for symmetry breaking in
  {C}affarelli-{K}ohn-{N}irenberg inequalities.
\newblock J. Numer. Math. \textbf{20}, 233--249 (2013)

\bibitem{0951-7715-27-3-435}
Dolbeault, J., Esteban, M.J.: Branches of non-symmetric critical points and
  symmetry breaking in nonlinear elliptic partial differential equations.
\newblock Nonlinearity \textbf{27}, 435--465 (2014)

\bibitem{1406}
Dolbeault, J., Esteban, M.J., Filippas, S., Tertikas, A.: Rigidity results with
  applications to best constants and symmetry of
  {C}affarelli-{K}ohn-{N}irenberg and logarithmic {H}ardy inequalities.
\newblock Calc. Var. Partial Differential Equations \textbf{54}(3), 2465--2481
  (2015)

\bibitem{DEL2011}
Dolbeault, J., Esteban, M.J., Loss, M.: Symmetry of extremals of functional
  inequalities via spectral estimates for linear operators.
\newblock J. Math. Phys. \textbf{53}, {095,204, 18 pp.} (2012)

\bibitem{DEL2015}
Dolbeault, J., Esteban, M.J., Loss, M.: Rigidity versus symmetry breaking via
  nonlinear flows on cylinders and euclidean spaces.
\newblock Inventiones mathematicae pp.
  \href{10.1007/s00,222--016--0656--6}{1--44} (2016)

\bibitem{2016arXiv160506373D}
{Dolbeault}, J., {Esteban}, M.J., {Loss}, M., {Muratori}, M.: {Symmetry for
  extremal functions in subcritical Caffarelli-Kohn-Nirenberg inequalities}
  (2016).
\newblock Preprint
  hal-\href{https://hal.archives-ouvertes.fr/hal-01318727}{01318727} \&
  ar{X}iv: \href{http://arxiv.org/abs/1605.06373}{1605.06373}

\bibitem{DELT09}
Dolbeault, J., Esteban, M.J., Loss, M., Tarantello, G.: On the symmetry of
  extremals for the {C}affarelli-{K}ohn-{N}irenberg inequalities.
\newblock Adv. Nonlinear Stud. \textbf{9}, 713--726 (2009)

\bibitem{MR2437030}
Dolbeault, J., Esteban, M.J., Tarantello, G.: The role of {O}nofri type
  inequalities in the symmetry properties of extremals for
  {C}affarelli-{K}ohn-{N}irenberg inequalities, in two space dimensions.
\newblock Ann. Sc. Norm. Super. Pisa Cl. Sci. \textbf{7}, 313--341 (2008)

\bibitem{DT2011}
Dolbeault, J., Toscani, G.: Improved interpolation inequalities, relative
  entropy and fast diffusion equations.
\newblock Ann. Inst. H. Poincar\'e Anal. Non Lin\'eaire \textbf{30}, 917--934
  (2013)

\bibitem{Felli-Schneider-03}
Felli, V., Schneider, M.: Perturbation results of critical elliptic equations
  of {C}affarelli-{K}ohn-{N}irenberg type.
\newblock J. Differential Equations \textbf{191}, 121--142 (2003)

\bibitem{Ga}
Gagliardo, E.: Propriet\`a di alcune classi di funzioni in pi\`u variabili.
\newblock Ricerche Mat. \textbf{7}, 102--137 (1958)

\bibitem{MR634248}
Gidas, B., Ni, W.M., Nirenberg, L.: Symmetry of positive solutions of nonlinear
  elliptic equations in {${\mathbb R}^{n}$}.
\newblock In: Mathematical analysis and applications, {P}art {A}, \emph{Adv. in
  Math. Suppl. Stud.}, vol.~7, pp. 369--402. Academic Press, New York-London
  (1981)

\bibitem{GT}
Gilbarg, D., Trudinger, N.S.: Elliptic partial differential equations of second
  order.
\newblock Classics in Mathematics. Springer-Verlag, Berlin (2001).
\newblock Reprint of the 1998 edition

\bibitem{GM13}
Grillo, G., Muratori, M.: Sharp short and long time {$L^\infty$} bounds for
  solutions to porous media equations with homogeneous {N}eumann boundary
  conditions.
\newblock J. Differential Equations \textbf{254}, 2261--2288 (2013)

\bibitem{GM14}
Grillo, G., Muratori, M.: Sharp asymptotics for the porous media equation in
  low dimensions via {G}agliardo-{N}irenberg inequalities.
\newblock Riv. Math. Univ. Parma (N.S.) \textbf{5}, 15--38 (2014)

\bibitem{GMP}
Grillo, G., Muratori, M., Porzio, M.M.: Porous media equations with two
  weights: smoothing and decay properties of energy solutions via {P}oincar\'e
  inequalities.
\newblock Discrete Contin. Dyn. Syst. \textbf{33}, 3599--3640 (2013)

\bibitem{Hor97}
Horiuchi, T.: Best constant in weighted {S}obolev inequality with weights being
  powers of distance from the origin.
\newblock J. Inequal. Appl. \textbf{1}, 275--292 (1997)

\bibitem{Lin-Wang-04}
Lin, C.S., Wang, Z.Q.: Symmetry of extremal functions for the
  {C}affarelli-{K}ohn-{N}irenberg inequalities.
\newblock Proc. Amer. Math. Soc. \textbf{132}, 1685--1691 (2004)

\bibitem{MR778970}
Lions, P.L.: The concentration-compactness principle in the calculus of
  variations. {T}he locally compact case. {I}.
\newblock Ann. Inst. H. Poincar\'e Anal. Non Lin\'eaire \textbf{1}, 109--145
  (1984)

\bibitem{MR778974}
Lions, P.L.: The concentration-compactness principle in the calculus of
  variations. {T}he locally compact case. {II}.
\newblock Ann. Inst. H. Poincar\'e Anal. Non Lin\'eaire \textbf{1}, 223--283
  (1984)

\bibitem{MR0159138}
Moser, J.: On {H}arnack's theorem for elliptic differential equations.
\newblock Comm. Pure Appl. Math. \textbf{14}, 577--591 (1961)

\bibitem{MR0288405}
Moser, J.: On a pointwise estimate for parabolic differential equations.
\newblock Comm. Pure Appl. Math. \textbf{24}, 727--740 (1971)

\bibitem{MuratoriThesis2015}
Muratori, M.: Weighted functional inequalities and nonlinear diffusions of
  porous medium type.
\newblock Ph.D. thesis, Politecnico di Milano and Universit\'e Paris I
  Panth\'eon-Sorbonne (2015)

\bibitem{Nir}
Nirenberg, L.: On elliptic partial differential equations.
\newblock Ann. Scuola Norm. Sup. Pisa \textbf{13}, 115--162 (1959)

\bibitem{MR1647924}
Pucci, P., Serrin, J.: Uniqueness of ground states for quasilinear elliptic
  operators.
\newblock Indiana Univ. Math. J. \textbf{47}(2), 501--528 (1998)

\bibitem{MR2425009}
Reyes, G., V{\'a}zquez, J.L.: The inhomogeneous {PME} in several space
  dimensions. {E}xistence and uniqueness of finite energy solutions.
\newblock Commun. Pure Appl. Anal. \textbf{7}, 1275--1294 (2008)

\bibitem{MR2461559}
Reyes, G., V{\'a}zquez, J.L.: Long time behavior for the inhomogeneous {PME} in
  a medium with slowly decaying density.
\newblock Commun. Pure Appl. Anal. \textbf{8}, 493--508 (2009)

\bibitem{MR2001882}
Smets, D., Willem, M.: Partial symmetry and asymptotic behavior for some
  elliptic variational problems.
\newblock Calc. Var. Partial Differential Equations \textbf{18}, 57--75 (2003)

\bibitem{Tal}
Talenti, G.: Best constant in {S}obolev inequality.
\newblock Ann. Mat. Pura Appl. \textbf{110}, 353--372 (1976)

\end{thebibliography}

\end{document}